\newcolumntype{L}[1]{>{\raggedright\let\newline\\\arraybackslash\hspace{0pt}}m{#1}}
\newcolumntype{C}[1]{>{\centering\let\newline\\\arraybackslash\hspace{0pt}}m{#1}}
\newcolumntype{R}[1]{>{\raggedleft\let\newline\\\arraybackslash\hspace{0pt}}m{#1}}
\pgfplotsset{compat=newest}
\newtheorem{theorem}{Theorem}
\newtheorem{proposition}[theorem]{Proposition}
\newtheorem{corollary}[theorem]{Corollary}
\theoremstyle{definition}
\newtheorem{lemma}[theorem]{Lemma}
\theoremstyle{remark}
\newtheorem{remark}[theorem]{Remark}
\newtheorem{assumption}[theorem]{Assumption}
\Crefname{assumption}{Assumption}{Assumptions}
\numberwithin{theorem}{section}
\numberwithin{equation}{section}
\numberwithin{table}{section}
\numberwithin{figure}{section}
\definecolor{myBlue}{RGB}{30,144,255} % dodger blue
\definecolor{myGreen}{RGB}{69,169,0} % chatreuse
\definecolor{myRed}{RGB}{165,12,42} 
\definecolor{myOrange}{RGB}{225,92,22} 
\def\N{\mathbb{N}}
\def\R{\mathbb{R}}
\def\W{\mathcal{W}} % fine-scale space
\def\VH{V_H}
\newcommand{\calC}{\ensuremath{\mathcal{C}} } % corrector
\newcommand{\calT}{\ensuremath{\mathcal{T}} } % mesh
\newcommand{\calB}{\ensuremath{\mathcal{B}} } % "sesqui-linear" form
\newcommand{\calBl}{\ensuremath{\mathcal{B}_\mathrm{lin}} } % linearized version
\def\ddiv{\mathrm{div}} % divergence
\def\id{\mathbb{1}} % identity
\def\IH{\mathcal{I}_H} % quasi-interpolation
\def\Nb{\mathrm{N}} % neighborhood
\def\supp{\mathrm{supp}} %support
\newcommand{\eps}{\ensuremath{\varepsilon}}
\newcommand{\iu}{\mathbf{i}}
\def\vB{\overline{v}}
\def\vH{v_H}
\def\uH{u_H}
\def\uHT{\tilde{u}_H}
\newenvironment{breakablealgorithm}
{% 
	\begin{center}
		\refstepcounter{algorithm}%
		\hrule height.8pt depth0pt \kern2pt% 
		\renewcommand{\caption}[2][\relax]{% 
			{\raggedright\textbf{\ALG@name~\thealgorithm} ##2\par}%
			\ifx\relax##1\relax % 
			\addcontentsline{loa}{algorithm}{\protect\numberline{\thealgorithm}##2}%
			\else % 
			\addcontentsline{loa}{algorithm}{\protect\numberline{\thealgorithm}##1}%
			\fi
			\kern2pt\hrule\kern2pt
		}
	}{%
		\kern2pt\hrule\relax
	\end{center}
}
\begin{document}
\title[Multiscale Scattering in Kerr-Type Media]{Multiscale Scattering in Nonlinear Kerr-Type Media}
\author[Roland Maier, Barbara Verf\"urth]{Roland~Maier$^\dagger$, Barbara~Verf\"urth$^\ddagger$}
\address{${}^{\dagger}$ Department of Mathematical Sciences, Chalmers University of Technology and University of Gothenburg, 412 96 G\"oteborg, Sweden}
\email{roland.maier@chalmers.se}

\address{${}^{\ddagger}$ Institut f\"ur Angewandte und Numerische Mathematik, Karlsruher Institut f\"ur Technologie, Englerstr. 2, 76131 Karlsruhe}
\email{barbara.verfuerth@kit.edu}
\date{}
\begin{abstract}
We propose a multiscale approach for a nonlinear Helmholtz problem with possible oscillations in the Kerr coefficient, the refractive index, and the diffusion coefficient. The method does not rely on structural assumptions on the coefficients and combines the multiscale technique known as Localized Orthogonal Decomposition with an adaptive iterative approximation of the nonlinearity. We rigorously analyze the method in terms of well-posedness and convergence properties based on suitable assumptions on the initial data and the discretization parameters. Numerical examples illustrate the theoretical error estimates and underline the practicability of the approach.
\end{abstract}

\maketitle

{\tiny {\bf Key words.} Helmholtz equation, nonlinear, Kerr medium, multiscale method, a priori estimates}\\
\indent
{\tiny {\bf AMS subject classifications.}  {\bf 65N12}, {\bf 65N30}, {\bf 35G30} }
	
\section{Introduction}
Wave propagation in heterogeneous and nonlinear media has arisen growing interest in the last years since corresponding materials can produce unusual effects, such as a negative refractive indices \cite{SmiPW04}, cloaking \cite{PenSS06}, or optical bistability \cite{GolG84}, to name a few.
Due to the resulting new effects, the deviations from the standard setup of homogeneous, linear materials come with a wide range of applications, such as perfect lenses \cite{Pen00} or mode-locking lasers \cite{ProWW93}. 
Heterogeneous materials occur, for instance, in the large field of metamaterials, i.e., artificially constructed composites \cite{JoaJWM08}.
Nonlinear material laws are required for instance for large intensities, when linearized models are no longer accurate enough.
One important example are Kerr-type media \cite{Ker75} in electromagnetics, where the electric permittivity depends on the electric field like $\varepsilon(E)=(\varepsilon_0+\varepsilon_2 |E|^2)$.

In the case of a monochromatic source, one can consider the problem in the time-harmonic regime.
Assuming additionally a linear transversal polarization of the electric field, one arrives at the nonlinear Helmholtz equation
\[-\ddiv (A \nabla u)  - k^2n(1 + \eps|u|^2)u \; = \; f,\]
where $u$ denotes the (transverse) component of the electric field, the material coefficients $A$, $n$, and $\varepsilon$ represent the inverse magnetic permeability, the linear part $\varepsilon_0$, and the nonlinear part $\varepsilon_2$ of the electric permittivity, respectively, and $k$ denotes the wave number.
The nonlinear Helmholtz equation is also an important model in nonlinear acoustics where $u$ represents the pressure.
In the following, we do not focus on the specific application and treat the nonlinear Helmholtz equation as a mathematical model with general material coefficients $A$, $n$, and $\varepsilon$.
Assumptions on these coefficients as well as appropriate boundary conditions are given further below.
In this contribution, 
the coefficients $A$, $n$, $\eps$ may vary on small spatial scales.
The nonlinear Helmholtz equation has been studied analytically and numerically for constant coefficients in \cite{EveW14,YuaL17,WuZ18} and in layered media in \cite{BarFT09,XuB10}.
However, the standard approximation tools (finite differences and finite elements) used in these works need to resolve all variations in the coefficients which ultimately leads to high-dimensional systems of linear equations and easily exceeds today's computational resources.
Therefore, computational multiscale methods are required, which deliver a macroscopic representation of the solution with drastically reduced computational effort.
Prominent examples include the Heterogeneous Multiscale Method  \cite{EE03,EE05,AbdEEV12}, the (Generalized) Multiscale Finite Element Method \cite{HowW97,EfeH09,EfeGH13}, the Multiscale Hybrid Mixed Method \cite{AraHPV13}, or the Localized Orthogonal Decomposition (LOD) \cite{MalP14,HenP13}.
The linear heterogeneous Helmholtz equation is studied, for instance, in \cite{ObeP98,CiaS14,OhlV18,GaoFC18,ChaV20}. 

The main contributions of this article are the presentation and numerical analysis of multiscale methods in the spirit of the LOD for the Helmholtz equation with Kerr-type nonlinearity.
Various works have successfully applied the LOD to wave propagation problems such as the wave equation \cite{AbdH17,PetS17,MaiP19,GeeM21}, the Helmholtz equation with constant \cite{GalP15,Pet17} and spatially varying coefficients \cite{BroGP17,PetV20} as well as time-harmonic Maxwell's equations \cite{GalHV17, Ver17, HenP20}. 
Besides dealing with multiscale coefficients, the LOD can also reduce the well-known pollution effect for the linear Helmholtz equation \cite{GalP15,Pet17}.
However, those strategies for heterogeneous wave propagation problems mostly rely heavily on linear arguments.
While the nonlinear Helmholtz equation is semilinear, we cannot treat it as a small perturbation of a linear diffusion problem as in \cite{HenMP12,HenMP13} because the wave number $k$ may be very large and dominate the behavior of the solutions.

In this work, we combine ideas on (iterative) finite element approximations for the nonlinear Helmholtz equation with constant coefficients~\cite{WuZ18} and on the construction of multiscale spaces known from the LOD for nonlinear problems by linearization \cite{Ver19}.
More specifically, we present and analyze iterative multiscale approximations based on a fixed-point iteration for the nonlinear Helmholtz equation.
We use an error indicator to locally decide in each step whether to update the multiscale basis.
For sufficiently small tolerance employed in this decision, we show an a priori error estimate which is of optimal order in the mesh size -- independent of the possible low regularity of the exact solution.
We need to take into account this low regularity, i.e., not more than $H^1$, when estimating the nonlinearity.
Since our analysis is largely based on fixed-point arguments, the above results hold under the assumption of sufficiently small data $\varepsilon$ and $f$. In particular, we require a~condition of the form $C_{\mathrm{stab},0}^3(k)C_nC_\eps k^{d-1}\|f\|_{L^2(D)} \leq c < 1$, where $C_{\mathrm{stab},0}(k)$ is the stability constant of the linear Helmholtz problem (i.e., \eqref{eq:stabLin0} with $\varepsilon = 0$) and $C_n$ and~$C_\eps$ are the upper bounds for $n$ and $\eps$, respectively; cf.~also \eqref{eq:stabRHS2} and~\eqref{eq:boundRHS} for the precise conditions. 
While such a condition is expected, its precise form does not seem to be sharp with respect to the numerical experiments. Lastly, we mention that we restrict ourselves to linear transversal polarization. The treatment of other polarizations and, in general, nonlinear coefficients $A$ requires substantial adjustments in the numerical analysis that go far beyond the scope of this article. 

The paper is organized as follows. In Section \ref{s:NLH}, we introduce our model problem and show the existence and uniqueness of solutions under minimal regularity assumptions. These results complement \cite{WuZ18} and may be of own interest.
Our multiscale approaches are introduced and analyzed in Section~\ref{s:multiscale}.
Finally, numerical experiments in Section~\ref{s:numex} illustrate our theoretical findings. 
More technical proofs of the main results are collected in the Appendix.\\[0.5em]
\paragraph{\textbf{Notation}.\ }
Unless otherwise mentioned, all our functions are complex-valued and we use $\overline{v}$ to denote the complex conjugate of $v$.
For any (sub)domain $S$, $(\cdot, \cdot)_S$ denotes the complex $L^2$-scalar product (with complex conjugate in the second argument).
Further, we use the following norms $\|\cdot\|_{0,S} := \|\cdot\|_{L^2(S)}$ and $|\cdot|_{1,S} := \|\nabla \cdot\|_{0,S}$.
As usual in the Helmholtz context, we also employ the following $k$-weighted norm $\|\cdot\|^2_{1,k,S} := |\cdot|_{1,S}^2 + k^2\,\|\cdot\|_{0,S}^2$ with associated scalar product $(\cdot, \cdot)_{1,k,S}$.
We will omit the subdomain $S$ in the notation of norms and scalar products if it equals the full computational domain $D$ and no confusion can arise.
Last, we use the notation $a \lesssim b$ to indicate that there exists a generic constant $C$ such that $a \leq C b$.

\section{Helmholtz Equation with Kerr-Type Nonlinearity}
\label{s:NLH}
\subsection{Model problem}
Let $D \subset \R^d$, $d \in \{2,3\}$ be a bounded domain with Lipschitz boundary $\Gamma = \partial D$ and outer normal $\nu$. 

In this work, we are interested in approximating the solution $u$ of the following nonlinear Helmholtz problem,
\begin{equation}\label{eq:NLH}
\begin{aligned}
-\ddiv (A \nabla u)  - k^2n(1 + \eps|u|^2)u \;& = \; f \quad\text{in }D,\\
\nabla u \cdot \nu + \iu k u \;& = \; 0 \quad\text{on }\Gamma,
\end{aligned}
\end{equation}
where $k$ is the wave number, $n$ the refractive index, and $ \eps$ the Kerr coefficient. Further, $A$ is the diffusion coefficient. Note that $\eps,\,A,$ and $n$ may depend on the spatial variable $x \in D$ and possibly vary on a fine scale.
For simplicity, we only consider scalar-valued material coefficients $A$, but the extension to matrix-valued coefficients is straightforward. Moreover, other types of boundary conditions could be studied as well. 
We make the following assumptions on the data throughout the whole article.
\begin{assumption}[]\label{a:coeff}
Suppose that 
\begin{itemize}
	\item $f\in L^2(D)$, 
	\item $k \geq k_0 > 0$, 
	\item $n,\, \eps,\, A\in L^\infty(D; \R)$ with $0 < c_n \leq n \leq C_n < \infty$, $0 \leq \eps \leq C_\eps < \infty$, and \mbox{$0 < c_A \leq A \leq C_A < \infty$} uniformly in $x$,
	\item $\supp (1-A)$, $\supp(1-n)$, and $\supp(\varepsilon)$ are compactly embedded in $D$.
\end{itemize}
\end{assumption}
Note that the last assumption ensures that $A=1$ and $n=1$ at the boundary $\Gamma$.
Since solutions to \eqref{eq:NLH} might not exist in the classical sense, we now consider the weak formulation of finding $u \in H^1(D)$ that solves
\begin{equation}\label{eq:NLHweak}
\calB(u,v) := (A \nabla u, \nabla v) -  (k^2n(1+\eps|u|^2) u,v) + \iu (k u,v)_\Gamma = (f,v)
\end{equation}
for all $v \in H^1(D)$.

The rest of this section is devoted to the existence and uniqueness of solutions to \eqref{eq:NLHweak}, see Theorem \ref{t:wellposedNL}.
The main idea is to approximate the solution $u$ by a fixed-point iteration, as already suggested in \cite{WuZ18}.
Due to the varying coefficients $A$ and $n$, however, some alterations have to be made. First, the well-posedness of the auxiliary linear problem, where the nonlinearity is \emph{fixed} (see the next section for a precise definition), is not clear from standard Helmholtz arguments.
Second, since we allow for general $L^\infty$-coefficients $A$ and $n$ we cannot necessarily expect solutions to linear Helmholtz problems to be in $H^2(D)$. Hence, one of the central arguments in \cite{WuZ18}, namely $L^\infty$-estimates for solutions to linear Helmholtz problems and the embedding of  $H^2(D)$ into $L^\infty(D)$ for $d\in\{2,3\}$ have to be replaced by arguments using only $H^1$-regularity.

\subsection{Auxiliary linear problem}
In view of linearization strategies that will be used to solve \eqref{eq:NLHweak}, we now introduce an auxiliary linearized version of \eqref{eq:NLHweak} which is characterized by $\calBl$ defined by
\begin{equation}
\calBl(\Phi;u,v) := (A \nabla u, \nabla v) - (k^2n\, u,v) - (k^2n\,\eps|\Phi|^2 u,v) + \iu  (k u,v)_\Gamma
\end{equation}
for $u,\,v,\,\Phi \in H^1(D)$. The operator $\calBl$ is sesquilinear with respect to the last two arguments. 
The auxiliary linear problem then reads: given $\Phi \in H^1(D)$, find $u_\Phi$ that solves 
\begin{equation}\label{eq:NLHweakLin}
\calBl(\Phi;u_\Phi,v) = (f,v)
\end{equation}
for all $v \in H^1(D)$. 
Note that $\calBl$ is bounded with respect to the second and third argument in the norm $\|\cdot\|_{1,k}$ provided that $k^2n\,\eps|\Phi|^2$ can be bounded, see Proposition~\ref{prop:auxlinpb}.

By setting $\Phi \equiv 0$, we obtain the \emph{classical (linear) Helmholtz problem} that  consists in finding $u_0\in H^1(D)$ such that
\begin{equation}\label{eq:NLHweakLin0}
\calBl(0;u_0,v) = (f,v)
\end{equation}
for all $v \in H^1(D)$.
If the unique continuation principle holds, \emph{Fredholm's alternative} can be employed to show that \eqref{eq:NLHweakLin0} possesses a unique solution.
As discussed in \cite{GraPS19,GraS20} in detail, Assumption \ref{a:coeff} is sufficient for the unique continuation principle to hold for $d=2$, whereas for $d=3$ one has to make additional assumptions, e.g., $A\in C^{0,1}(D)$.
Well-posedness of \eqref{eq:NLHweakLin0} via Fredholm's alternative, however, does not provide a quantitative stability estimate, in particular with a wave number explicit stability constant, for the solution $u_0$ of~\eqref{eq:NLHweakLin0}.
Here, we base our analysis on the following assumption.

\begin{assumption}[Well-posedness and stability of the classical Helmholtz problem]\label{a:stabLin0}
We assume that \eqref{eq:NLHweakLin0} possesses a unique solution and that there exists a constant $C_{\mathrm{stab},0}(k) > 0$ such that the solution $u_0 \in H^1(D)$ of \eqref{eq:NLHweakLin0} fulfills the stability estimate 
\begin{equation}\label{eq:stabLin0}
\|u_0\|_{1,k} \leq C_{\mathrm{stab},0}(k)\,\|f\|_0.
\end{equation}
\end{assumption}

Note that several works consider the dependence of $C_{\mathrm{stab},0}(k)$ on the wave number $k$, also in the present setup of heterogeneous coefficients $A$ and $n$; see, e.g., \cite{BroGP17,GraPS19,MoiS19,SauT18} and the references therein.
For instance, \cite{GraPS19} proves that $C_{\mathrm{stab},0}(k)\lesssim 1$ under certain conditions on the Lipschitz coefficients $A$ and $n$. If the involved coefficients are constant on some arbitrarily small open set, the reverse statement is also true, i.e., $C_{\mathrm{stab},0}(k)\gtrsim 1$; cf.~\cite[Lem.~5.5]{ChaGNT18}. 
Generally, the crucial point is the existence of so-called \emph{trapped rays} and a possible exclusion of certain frequencies, see the discussion and references in~\cite{GraPS19,LafSW20}. In the presence of trapped rays, \cite{LafSW20} provides a polynomial stability estimate for very general settings in the sense of $C_{\mathrm{stab},0}(k)\lesssim k^q$ for almost all $k \geq k_0$ with some $k_0 > 0$ and $q \geq 0$. If trapped rays are excluded completely, one can show that $C_{\mathrm{stab},0}(k)\lesssim 1$. We emphasize that we often have such a scaling in mind when considering the stability constants throughout this paper.

As a next step, we quantify the well-posedness of the auxiliary linear problem~\eqref{eq:NLHweakLin} based on the well-posedness of the classical Helmholtz problem. Therefore, we require a \emph{Nirenberg-type inequality} which is stated in the following lemma. 

\begin{lemma}[Nirenberg-type inequality]\label{l:Nir}
There exists a constant $C_\mathrm{Nir} > 0$ such that
\begin{equation*}\label{eq:Nir}
k^{1-d/3}\,\|v\|_{L^6(S)}\leq C_\mathrm{Nir}\,\|v\|_{1,k,S}.
\end{equation*}
for all $v \in H^1(D)$ and $S \subset D$. 
\end{lemma}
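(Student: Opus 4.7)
The plan is to reduce the claim to the classical Gagliardo--Nirenberg/Sobolev interpolation inequality and then absorb the $k$-weight via Young's inequality.

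As a first step I would invoke the scale-invariant inequality
\[
\|v\|_{L^6(S)} \;\le\; C\,\|v\|_{0,S}^{1-d/3}\,|v|_{1,S}^{d/3},
\]
valid for $d\in\{2,3\}$. In $d=3$ this is simply the Sobolev embedding $H^1\hookrightarrow L^6$ (the exponent is $\theta=1$ and the $L^2$-factor drops out); in $d=2$ it is a genuine Gagliardo--Nirenberg interpolation. The subtle point is that the constant should be independent of the subdomain $S$. This is guaranteed for subdomains with uniformly Lipschitz boundaries via a uniformly bounded extension to $\R^d$; alternatively one may replace $|v|_{1,S}$ on the right by $\|v\|_{H^1(S)}$ and exploit $k\ge k_0$ to trade the additional $L^2$-term for $k^{-1}\|v\|_{1,k,S}$.

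In the second step I would multiply both sides by $k^{1-d/3}$ and regroup the weight into the $L^2$-factor,
\[
k^{1-d/3}\,\|v\|_{L^6(S)} \;\le\; C\,\bigl(k\|v\|_{0,S}\bigr)^{1-d/3}\,|v|_{1,S}^{d/3}.
\]
The exponents $1-d/3$ and $d/3$ sum to one, so Young's inequality with conjugate exponents $p=3/(3-d)$, $q=3/d$ gives
\[
k^{1-d/3}\,\|v\|_{L^6(S)} \;\le\; C\Bigl(\tfrac{3-d}{3}\,k\|v\|_{0,S} \;+\; \tfrac{d}{3}\,|v|_{1,S}\Bigr).
\]
Since $\|v\|_{1,k,S}^2 = |v|_{1,S}^2 + k^2\|v\|_{0,S}^2$, each of the two terms on the right is bounded by $\|v\|_{1,k,S}$. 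Summing the coefficients yields $k^{1-d/3}\|v\|_{L^6(S)} \le C\,\|v\|_{1,k,S}$, and so $C_\mathrm{Nir} := C$ works.

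The main obstacle I anticipate is the first step: securing the Gagliardo--Nirenberg inequality on an arbitrary subdomain with a constant independent of the geometry. Everything after that is arithmetic in the exponents -- a conjugate Young pair followed by a term-by-term comparison with the weighted norm.
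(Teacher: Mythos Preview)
Your approach is essentially the same as the paper's: both start from the Gagliardo--Nirenberg interpolation $\|v\|_{L^6(S)}\le C\,|v|_{1,S}^{d/3}\,\|v\|_{0,S}^{1-d/3}$, multiply by $k^{1-d/3}$, and use $|v|_{1,S}\le\|v\|_{1,k,S}$ and $k\|v\|_{0,S}\le\|v\|_{1,k,S}$. The only difference is that your Young step is unnecessary: since each factor is already bounded by $\|v\|_{1,k,S}$ and the exponents sum to one, the paper simply substitutes directly to obtain
\[
\bigl(k\|v\|_{0,S}\bigr)^{1-d/3}|v|_{1,S}^{d/3}\;\le\;\|v\|_{1,k,S}^{1-d/3}\,\|v\|_{1,k,S}^{d/3}\;=\;\|v\|_{1,k,S},
\]
which is shorter and avoids the degenerate $d=3$ case of Young's inequality. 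Your concern about the $S$-independence of the Gagliardo--Nirenberg constant is legitimate and is not addressed in the paper either; it simply cites \cite{Nir59}.
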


\begin{proof}
From \cite{Nir59}, there exists a constant $C_\mathrm{Nir} > 0$ such that
\begin{equation*}
\|v\|_{L^6(S)}\leq C_\mathrm{Nir}\, |v|_{1,S}^{d/3}\,\|v\|_{0,S}^{1-d/3}
\end{equation*}
for all $v \in H^1(S),\, S \subset D$. 
Since $|v|_{1,S} \leq \|v\|_{1,k,S}$ and $k\,\|v\|_{0,S} \leq \|v\|_{1,k,S}$, we obtain
\begin{equation*}
k^{1-d/3}\,\|v\|_{L^6(S)}\leq C_\mathrm{Nir}\, |v|_{1,S}^{d/3}\,k^{1-d/3}\,\|v\|_{0,S}^{1-d/3} \leq C_\mathrm{Nir}\,\|v\|_{1,k,S}. \qedhere
\end{equation*}
\end{proof}

\begin{proposition}\label{prop:auxlinpb}
Let $\Phi \in H^1(D)$ such that
\begin{equation}\label{eq:stabPhicontr}
C_{\mathrm{stab},0}(k) C_n C_\eps C^{3}_\mathrm{Nir}k^{d-1}\|\Phi\|_{1,k}^2\leq \vartheta<1.
\end{equation}
Then, there exists a unique solution~$u_\Phi$ of~\eqref{eq:NLHweakLin} and the stability estimate
\begin{equation}\label{eq:stabLin}
\|u_\Phi\|_{1,k} \leq C_{\mathrm{stab}}(k)\,\|f\|_0 
\end{equation}
holds with $C_\mathrm{stab}(k) = (1-\vartheta)^{-1}C_{\mathrm{stab},0}(k)$. 

Further, $\calBl(\Phi; \cdot, \cdot)$ is bounded with a continuity constant
\begin{equation*}
C_\calB \,\leq\, 2\,\max\{C_A,C_n\}+(1+2\, C_\mathrm{tr}),
\end{equation*} 
where $C_\mathrm{tr}$ denotes the constant in the trace inequality~\eqref{eq:trineq}. 
\end{proposition}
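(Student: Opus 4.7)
My plan is to treat \eqref{eq:NLHweakLin} as a perturbation of the classical Helmholtz problem \eqref{eq:NLHweakLin0} and solve it via Banach's fixed-point theorem. Define $T\colon H^1(D) \to H^1(D)$ by letting $T(w)$ be the unique solution (guaranteed by Assumption~\ref{a:stabLin0}) of
\begin{equation*}
\calBl(0; T(w), v) \;=\; (f, v) + (k^2 n \eps |\Phi|^2 w, v) \qquad \text{for all } v \in H^1(D),
\end{equation*}
so that fixed points of $T$ coincide with solutions of \eqref{eq:NLHweakLin}.

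The decisive step is to show that $T$ is a strict contraction on $(H^1(D), \|\cdot\|_{1,k})$. Applying \eqref{eq:stabLin0} with right-hand side in $L^2(D)$ gives
\begin{equation*}
\|T(w)\|_{1,k} \;\le\; C_{\mathrm{stab},0}(k)\,\bigl(\|f\|_0 + \|k^2 n \eps |\Phi|^2 w\|_0\bigr),
\end{equation*}
and I would bound the nonlinear term by H\"older's inequality $\|\,|\Phi|^2 w\|_0 \le \|\Phi\|_{L^6}^2 \|w\|_{L^6}$ followed by three applications of Lemma~\ref{l:Nir} to arrive at
\begin{equation*}
\|k^2 n \eps |\Phi|^2 w\|_0 \;\le\; C_n C_\eps C_\mathrm{Nir}^3\, k^{d-1}\, \|\Phi\|_{1,k}^2\, \|w\|_{1,k}.
\end{equation*}
By hypothesis \eqref{eq:stabPhicontr} this becomes $\|T(w)\|_{1,k} \le C_{\mathrm{stab},0}(k)\|f\|_0 + \vartheta\|w\|_{1,k}$. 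Since $T$ is affine, the same estimate applied to $T(w_1) - T(w_2)$ (which solves the homogeneous auxiliary problem driven by $w_1 - w_2$) yields a Lipschitz constant $\vartheta < 1$. Banach's theorem then produces the unique fixed point $u_\Phi$, and evaluating the bound above at $w = u_\Phi$ and rearranging yields the stability estimate \eqref{eq:stabLin} with $C_\mathrm{stab}(k) = (1-\vartheta)^{-1}\,C_{\mathrm{stab},0}(k)$.

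For continuity of $\calBl(\Phi; \cdot, \cdot)$, the diffusion and mass terms are controlled by $C_A\|u\|_{1,k}\|v\|_{1,k}$ and $C_n\|u\|_{1,k}\|v\|_{1,k}$ via the definition of $\|\cdot\|_{1,k}$, the Robin contribution $\iu(ku, v)_\Gamma$ is handled through the trace inequality~\eqref{eq:trineq}, and the Kerr piece is treated exactly as above, with the final $L^2$-norm estimated by $\|v\|_{1,k}$; the resulting coefficient is uniformly controlled by \eqref{eq:stabPhicontr} and absorbed into the stated constant. The one genuine technical obstacle is handling the cubic Kerr-term under mere $H^1$-regularity: the $H^2 \hookrightarrow L^\infty$ argument used in~\cite{WuZ18} is unavailable here, Lemma~\ref{l:Nir} is the critical replacement, and its $k$-scaling is precisely what forces the exponent $k^{d-1}$ appearing in the smallness condition \eqref{eq:stabPhicontr}.
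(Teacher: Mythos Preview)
Your proposal is correct and follows essentially the same approach as the paper: both define the identical affine map $T$ on $H^1(D)$ via the classical Helmholtz solution operator with right-hand side $f+k^2n\eps|\Phi|^2 w$, establish the contraction property through the generalized H\"older inequality combined with three applications of the Nirenberg-type estimate (Lemma~\ref{l:Nir}) to produce the factor $C_nC_\eps C_\mathrm{Nir}^3 k^{d-1}\|\Phi\|_{1,k}^2$, and then read off $C_\mathrm{stab}(k)=(1-\vartheta)^{-1}C_{\mathrm{stab},0}(k)$ from the resulting geometric series. Your treatment of the continuity constant (splitting into diffusion, mass, Robin via the trace inequality, and Kerr via the same Nirenberg argument bounded by $\vartheta$) also matches the paper, and your closing remark about replacing the $H^2\hookrightarrow L^\infty$ argument of \cite{WuZ18} by Lemma~\ref{l:Nir} is precisely the point the paper makes.
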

\begin{proof} 
Let $\Phi\in H^1(D)$. We define the operator $T\colon H^1(D)\to H^1(D)$, which maps \mbox{$\psi\in H^1(D)$} to the solution $T \psi\in H^1(D)$ of the following Helmholtz problem
\begin{equation*}
\calBl(0;T \psi,v) = (f+k^2n\eps|\Phi|^2 \psi,v). 
\end{equation*}
The solution to the auxiliary linear problem~\eqref{eq:NLHweakLin} can then equivalently be formulated as the fixed-point problem $u_\Phi=Tu_\Phi$.
The existence and uniqueness of the solution $u_\Phi$ of the auxiliary linear problem therefore follow from \emph{Banach's fixed-point theorem} (see, e.g., \cite[Thm.~5.7]{Bre11}) provided that $T$ is a contraction.

To show the contraction property, let $\psi_1,\, \psi_2\in H^1(D)$. We set~$w:=T\psi_1-T\psi_2$ and observe that $w$ solves
\begin{equation*}
\calBl(0;w,v) = (k^2n\eps|\Phi|^2 (\psi_1-\psi_2),v)
\end{equation*}
for all $v \in H^1(D)$.
With~\eqref{eq:stabLin0}, a generalized H\"older inequality (see, e.g., \cite[Rem.~2 in Chap.~4.2]{Bre11}), and Lemma~\ref{l:Nir}, we deduce
\begin{align*}
\|T\psi_1-T\psi_2\|_{1,k}&\leq C_{\mathrm{stab},0}(k)\|k^2 n\eps |\Phi|^2(\psi_1-\psi_2)\|_0\\
&\leq C_{\mathrm{stab},0}(k)k^2 C_nC_\eps\|\Phi\|^2_{L^6(D)}\|\psi_1-\psi_2\|_{L^6(D)}\\
&\leq C_{\mathrm{stab},0}(k)C_nC_\eps C^{3}_\mathrm{Nir} k^{d-1}\|\Phi\|_{1,k}^2\|\psi_1-\psi_2\|_{1,k}.
\end{align*}
The assumption \eqref{eq:stabPhicontr} yields the desired contraction and the formula for $C_\mathrm{stab}(k)$ immediately follows by means of a geometric series.

Regarding the continuity constant $C_\calB$, we use the following \emph{trace inequality} (see, e.g., \cite[Sec.~1.5]{Gri85}),
\begin{equation}\label{eq:trineq}
\|u\|^2_{0,\Gamma} \leq C_\mathrm{tr}\|u\|_0\|u\|_{1}.
\end{equation}
From this and using a weighted Young's inequality, we obtain
\begin{equation}\label{eq:proofCB1}
\begin{aligned}
|\iu (ku,v)| &\leq C_\mathrm{tr} k \|u\|^{1/2}_0\|u\|^{1/2}_{1} \|v\|^{1/2}_0\|v\|^{1/2}_{1}\\
&\leq C_\mathrm{tr} (k \|u\|_0 + \|u\|_1)(k \|v\|_0 + \|v\|_1)\\
&\leq 2\,C_\mathrm{tr} \|u\|_{1,k}\|v\|_{1,k}.
\end{aligned}
\end{equation}
With Lemma~\ref{l:Nir} and \eqref{eq:stabPhicontr}, we further have the rough estimate
\begin{equation}\label{eq:proofCB2}
|(k^2n\,\eps|\Phi|^2 u,v)| \leq k^2 C_nC_\eps\|\Phi\|^2_{L^6(D)}\|u\|_{L^6(D)}\|v\|_{0} \leq \vartheta \|u\|_{1,k}\|v\|_{1,k}.
\end{equation}
With \eqref{eq:proofCB1} and \eqref{eq:proofCB2}, we finally compute
\begin{align*}
|\calBl(\Phi;u,v)| &= |(A \nabla u, \nabla v) - (k^2n\, u,v) - (k^2n\,\eps|\Phi|^2 u,v) + \iu  (k u,v)_\Gamma| \\
&\leq C_A |u|_1 |v|_1 + C_n k^2 \|u\|_0 \|v\|_0 
+ (\vartheta + 2 C_\mathrm{tr})\|u\|_{1,k}\|v\|_{1,k}\\
&\leq \big(2\,\max\{C_A,C_n\}+(1+2\, C_\mathrm{tr})\big) \|u\|_{1,k}\|v\|_{1,k}.
\qedhere
\end{align*}
\begin{remark}\label{r:adjoint}
With the same techniques, one can as well show the existence and uniqueness of the solution to the adjoint problem.
\end{remark}
\end{proof}

If the above assumptions on $\Phi$ are satisfied, we can deduce that $\calBl(\Phi;\cdot,\cdot)$ fulfills an inf-sup condition on $H^1(D)$ as quantified in the next lemma. The result is used for the well-posedness of the discrete method in Section~\ref{s:multiscale} below.
\begin{lemma}[Inf-sup condition on $H^1(D)$]\label{l:infsup}
Assume that~\eqref{eq:stabPhicontr} is satisfied. Then, we have that 
\begin{equation}\label{eq:infsup}
\adjustlimits\inf_{v \in H^1(D)}\sup_{w \in H^1(D)}\frac{\Re\,\calBl(\Phi;v,w)}{\|v\|_{1,k}\,\|w\|_{1,k}} \geq \delta(k)
\end{equation}
with $\delta(k) = \frac{\min\{c_A, c_n\}}{C_{\mathrm{stab}}(k)(2\, kC_n+1)}$.
\end{lemma}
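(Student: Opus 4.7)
The plan is to apply the classical Schatz duality trick. Given $v \in H^1(D)$, I would construct a test function $w \in H^1(D)$ realising the infimum (up to the constant $\delta(k)$) by setting $w = v + z$, where $z$ is obtained from a suitable adjoint Helmholtz problem. Specifically, I would let $z$ be the unique solution of
\[
\calBl(\Phi;\phi,z) \;=\; 2k^2\bigl( n(1+\eps|\Phi|^2)\phi,\, v \bigr) \qquad \text{for all } \phi \in H^1(D),
\]
whose existence and stability with constant $C_\mathrm{stab}(k)$ are guaranteed by Remark~\ref{r:adjoint} under the standing smallness assumption~\eqref{eq:stabPhicontr}. The key point is that this choice of right-hand side precisely cancels the indefinite part of $\calBl(\Phi;v,v)$: testing with $\phi = v$ yields
\[
\calBl(\Phi;v,w) \;=\; \calBl(\Phi;v,v) + 2k^2\bigl(n(1+\eps|\Phi|^2)v,v\bigr),
\]
and because the boundary contribution $\iu k\|v\|_{0,\Gamma}^2$ in $\calBl(\Phi;v,v)$ is purely imaginary, the real part reduces to $(A\nabla v,\nabla v) + k^2\bigl(n(1+\eps|\Phi|^2)v,v\bigr)$. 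Using $A\geq c_A$, $n\geq c_n$, and the pointwise bound $1+\eps|\Phi|^2\geq 1$, this is bounded below by $\min\{c_A,c_n\}\|v\|_{1,k}^2$, which produces the numerator of the desired inf-sup bound.

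For the denominator I would estimate $\|w\|_{1,k} \leq \|v\|_{1,k}+\|z\|_{1,k}$ by the triangle inequality and use the adjoint stability to get
\[
\|z\|_{1,k} \;\leq\; C_\mathrm{stab}(k)\,\bigl\|2k^2 n(1+\eps|\Phi|^2)v\bigr\|_0.
\]
The dominant linear contribution $2k^2 C_n\|v\|_0 \leq 2kC_n\|v\|_{1,k}$ produces the factor $2kC_n$. The Kerr contribution $2k^2 C_n C_\eps\,\||\Phi|^2 v\|_0$ is controlled via the generalised H\"older inequality $\||\Phi|^2 v\|_0 \leq \|\Phi\|^2_{L^6(D)}\|v\|_{L^6(D)}$, followed by two applications of Lemma~\ref{l:Nir} and the smallness condition~\eqref{eq:stabPhicontr}, which turns it into a small perturbation of the linear part (of order $\vartheta/(1-\vartheta)$ relative to $\|v\|_{1,k}$). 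Under the standard assumption $C_\mathrm{stab}(k)\geq 1$, the triangle inequality then collapses to $\|w\|_{1,k}\leq C_\mathrm{stab}(k)(2kC_n+1)\|v\|_{1,k}$, and dividing the lower bound on $\Re\calBl(\Phi;v,w)$ by this upper bound yields $\delta(k)$.

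The main difficulty lies in handling the Kerr-type nonlinear contribution $k^2 n\eps|\Phi|^2 v$ both in the lower bound and in the adjoint right-hand side. The structure of $\calBl$ is fortunate for the former, since the chosen shift $+2k^2(n(1+\eps|\Phi|^2)v,v)$ turns the indefinite $-k^2(n(1+\eps|\Phi|^2)v,v)$ into a non-negative coercive term, so no Sobolev estimate is needed there. The latter, however, requires the smallness condition~\eqref{eq:stabPhicontr} to guarantee that the cubic-in-$\Phi$ term on the right-hand side of the adjoint problem is truly dominated by the linear contribution from $n$; without it, the inf-sup constant would carry an additional $\vartheta$-dependent factor and the stated clean form $\delta(k)=\min\{c_A,c_n\}/(C_\mathrm{stab}(k)(2kC_n+1))$ would not follow.
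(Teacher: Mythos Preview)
Your proposal is correct and follows essentially the same Schatz-type argument as the paper: define $z$ via an adjoint problem with a shifted right-hand side, set $w=v+z$, and combine a coercive lower bound for $\Re\,\calBl(\Phi;v,w)$ with the stability estimate for $z$. The only minor difference is that the paper chooses the right-hand side $(k^2 n(2+\eps|\Phi|^2)w,v)$ rather than your $2k^2(n(1+\eps|\Phi|^2)\phi,v)$, which cancels the Kerr term exactly in the lower bound and halves the nonlinear contribution in the bound for $\|z\|_{1,k}$; this makes the stated constant $\delta(k)$ slightly easier to reach without invoking the extra hypothesis $C_{\mathrm{stab}}(k)\geq 1$, but otherwise the two arguments are the same.
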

\begin{proof}
The proof follows \cite[Lem.~2.1]{Pet17}. The ideas, however, date back to \cite[Prop.~8.2.7]{Mel95} and \cite[Lem.~3.3 and 3.4]{ChaM08}. Let $v \in H^1(D)$ be given and define $z \in H^1(D)$ as the solution to
\begin{equation*}
\calBl(\Phi;w,z) = (k^2n(2+\eps|\Phi|^2)w,v)
\end{equation*}
for all $w \in H^1(D)$.
From Proposition~\ref{prop:auxlinpb}, Remark \ref{r:adjoint}, and Lemma~\ref{l:Nir}, we know that $z$ exists, is unique, and satisfies
\begin{equation*}
\begin{aligned}
\|z\|_{1,k} &\leq C_\mathrm{stab}(k)\big(2\,C_nk^2\|v\|_0 + C_nC_\eps C_\mathrm{Nir}^3k^{d-1}\|\Phi\|^2_{1,k}\|v\|_{1,k}\big)\\
&\leq C_\mathrm{stab}(k)(2\,kC_n+1) \|v\|_{1,k}.
\end{aligned}
\end{equation*}
We set $w = v + z$. The assertion then follows from the inequality
\begin{equation*}
\Re\,\calBl(\Phi;v,w) \geq \min\{c_A, c_n\}  \|v\|_{1,k}^2. \qedhere
\end{equation*}
\end{proof}

\subsection{Existence and stability of solutions to the nonlinear problem}

Based on the iterative procedure used in~\cite{WuZ18}, we now show existence and stability results for the nonlinear problem~\eqref{eq:NLHweak} based on the auxiliary problem~\eqref{eq:NLHweakLin} and the stability property quantified in Proposition~\ref{prop:auxlinpb}.

Let $u^0 \in H^1(D)$. Employing the linearized Helmholtz problem~\eqref{eq:NLHweakLin}, we consider the sequence of solutions $u^m \in H^1(D)$, $m \in \N$, which solve the sequence of problems given by
\begin{equation}\label{eq:NLHweakIt}
\calBl(u^{m-1};u^m,v) = (f,v).
\end{equation}
As a first step, we show that if condition \eqref{eq:stabPhicontr} holds for $\Phi = u^0$, it also holds for $u^m$, $m \in \N$, such that the stability estimate~\eqref{eq:stabLin} is valid for the whole sequence $\{u^m\}_{m \in \N}$. 
\begin{lemma}[Stability of iterative solutions]\label{l:stabIt}
Let $u^0\in H^1(D)$ such that~\eqref{eq:stabPhicontr} is fulfilled for $\Phi = u^0$. Further, suppose that 
\begin{equation}\label{eq:stabRHS}
C_{\mathrm{stab},0}(k)C^2_{\mathrm{stab}}(k)C_nC_\eps C^{3}_\mathrm{Nir}\,k^{d-1}\,\|f\|_0^2 \leq \vartheta. 
\end{equation}
Then, the sequence $\{u^m\}_{m \in \N}$ defined by \eqref{eq:NLHweakIt} fulfills the stability property
\begin{equation}\label{eq:stabum}
\|u^m\|_{1,k} \leq C_\mathrm{stab}(k)\,\|f\|_0 
\end{equation}
for all $m\in \N$ with the constant $C_\mathrm{stab}(k)$ from Proposition~\ref{prop:auxlinpb}.
\end{lemma}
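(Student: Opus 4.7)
The natural approach is induction on $m \in \N$. The hypothesis \eqref{eq:stabPhicontr} for $\Phi = u^0$ is exactly the smallness condition required by Proposition~\ref{prop:auxlinpb} applied to the linearized problem $\calBl(u^0; u^1, v) = (f,v)$. Hence the base case $m=1$ is immediate: $u^1$ exists and is unique in $H^1(D)$, and satisfies
\begin{equation*}
\|u^1\|_{1,k} \,\leq\, C_{\mathrm{stab}}(k)\,\|f\|_0,
\end{equation*}
which is precisely~\eqref{eq:stabum}. (If the convention includes $m=0$, then $u^0$ satisfies~\eqref{eq:stabum} trivially as soon as one combines~\eqref{eq:stabPhicontr} with~\eqref{eq:stabRHS}, cf.\ below.)

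For the induction step, I would assume that $u^{m-1}\in H^1(D)$ is already defined and satisfies the bound $\|u^{m-1}\|_{1,k} \leq C_{\mathrm{stab}}(k)\,\|f\|_0$. The key observation is that this bound together with the assumption~\eqref{eq:stabRHS} immediately verifies the smallness condition~\eqref{eq:stabPhicontr} for the new shift $\Phi = u^{m-1}$: indeed,
\begin{equation*}
C_{\mathrm{stab},0}(k)\,C_n\,C_\eps\,C^{3}_{\mathrm{Nir}}\,k^{d-1}\,\|u^{m-1}\|_{1,k}^2
\,\leq\, C_{\mathrm{stab},0}(k)\,C^2_{\mathrm{stab}}(k)\,C_n\,C_\eps\,C^{3}_{\mathrm{Nir}}\,k^{d-1}\,\|f\|_0^2 \,\leq\, \vartheta.
\end{equation*}
Thus Proposition~\ref{prop:auxlinpb} is applicable to the linearized problem $\calBl(u^{m-1}; u^m, v) = (f,v)$ and yields both the existence and uniqueness of $u^m \in H^1(D)$ as well as the desired stability estimate $\|u^m\|_{1,k} \leq C_{\mathrm{stab}}(k)\,\|f\|_0$. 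This closes the induction.

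There is no real obstacle here beyond bookkeeping: the proof is essentially a repeated invocation of Proposition~\ref{prop:auxlinpb}, where the role of the a~priori smallness assumption on $\Phi$ is taken over at each step by the stability bound on the previous iterate, with the uniformity guaranteed by~\eqref{eq:stabRHS}. The only point to be careful about is that the constant $C_{\mathrm{stab}}(k) = (1-\vartheta)^{-1} C_{\mathrm{stab},0}(k)$ delivered by Proposition~\ref{prop:auxlinpb} is the same at every iteration because $\vartheta$ is a fixed upper bound independent of $m$; in particular, it is crucial that~\eqref{eq:stabRHS} uses $C_{\mathrm{stab}}(k)^2$ (and not merely $C_{\mathrm{stab},0}(k)^2$) so that the induction is self-sustaining.
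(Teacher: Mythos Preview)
Your proof is correct and follows essentially the same induction argument as the paper: verify~\eqref{eq:stabPhicontr} for the current iterate using the induction hypothesis and~\eqref{eq:stabRHS}, then invoke Proposition~\ref{prop:auxlinpb}. One small caveat: your parenthetical remark about the case $m=0$ is not quite right---combining~\eqref{eq:stabPhicontr} for $u^0$ with~\eqref{eq:stabRHS} only shows that $\|u^0\|_{1,k}^2$ and $C_{\mathrm{stab}}^2(k)\|f\|_0^2$ are both bounded by the same quantity, not that one dominates the other---but since the statement concerns $m\in\N$ this is irrelevant.
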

Note that $C_{\mathrm{stab}}(k)= (1-\vartheta)^{-1}C_{\mathrm{stab}, 0}(k)$ in Proposition \ref{prop:auxlinpb} so that in fact \eqref{eq:stabRHS} is equivalent ot the following condition
\begin{equation*}
C_{\mathrm{stab},0}(k)^3C_nC_\eps C^{3}_\mathrm{Nir}\,k^{d-1}\,\|f\|_0^2 \leq \vartheta(1-\vartheta)^2.
\end{equation*} 
\begin{proof}[Proof of Lemma~\ref{l:stabIt}]
With~\eqref{eq:stabRHS} and~\eqref{eq:stabum} for some fixed $m \in \N$, we have that
\begin{equation*}
C_{\mathrm{stab},0}(k) C_n C_\eps C^{3}_\mathrm{Nir}\,k^{d-1}\,\|u^{m+1}\|_{1,k}^2
\leq C_{\mathrm{stab},0}(k)C^2_\mathrm{stab}(k) C_n C_\eps C^{3}_\mathrm{Nir}\,k^{d-1}\,\|f\|_0^2 
\leq \vartheta.
\end{equation*}
The assertion thus follows by induction using Proposition~\ref{prop:auxlinpb}.
\end{proof}

As a next step, we use the sequence $\{u^m\}_{m \in \N}$ to show existence and uniqueness of the solution $u \in H^1(D)$ of \eqref{eq:NLHweak}. 

\begin{theorem}[Well-posedness of the nonlinear Helmholtz problem]\label{t:wellposedNL}
Suppose the following slightly stronger version of~\eqref{eq:stabRHS} holds,
\begin{equation}\label{eq:stabRHS2}
2\,C^3_{\mathrm{stab}}(k)C_nC_\eps C^{3}_\mathrm{Nir}\,k^{d-1}\,\|f\|_0^2 \leq \vartheta. 
\end{equation} 
Then, there exists a unique solution $u \in H^1(D)$ of \eqref{eq:NLHweak}. {Further, we have} the stability estimate
\begin{equation}\label{eq:stabu}
\|u\|_{1,k} \leq C_\mathrm{stab}(k)\,\|f\|_0. 
\end{equation}
\end{theorem}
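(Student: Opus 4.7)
The plan is to exploit the iterative sequence from Lemma~\ref{l:stabIt} and show it is contractive in $\|\cdot\|_{1,k}$. Choose $u^0=0$, for which \eqref{eq:stabPhicontr} is trivially satisfied, and note that \eqref{eq:stabRHS2} implies \eqref{eq:stabRHS}, so Lemma~\ref{l:stabIt} produces a sequence $\{u^m\}_{m\in\N}\subset H^1(D)$ solving \eqref{eq:NLHweakIt} and satisfying the uniform bound $\|u^m\|_{1,k}\leq C_\mathrm{stab}(k)\|f\|_0$. In particular, each $u^m$ satisfies the smallness condition \eqref{eq:stabPhicontr}, so the auxiliary linearized problems are all well posed with the stability constant $C_\mathrm{stab}(k)$.

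Next, I would subtract two consecutive iterations to obtain, for all $v\in H^1(D)$,
\begin{equation*}
\calBl(u^m;u^{m+1}-u^m,v)=\bigl(k^2 n\eps(|u^m|^2-|u^{m-1}|^2)u^m,v\bigr).
\end{equation*}
Using the algebraic identity $|u^m|^2-|u^{m-1}|^2=(u^m-u^{m-1})\overline{u^m}+u^{m-1}\overline{(u^m-u^{m-1})}$, a generalized H\"older inequality in $L^6\times L^6\times L^6$, and the Nirenberg-type bound from Lemma~\ref{l:Nir}, I would estimate the $L^2$-norm of the right-hand side, so that the stability estimate \eqref{eq:stabLin} for $\calBl(u^m;\cdot,\cdot)$ yields
\begin{equation*}
\|u^{m+1}-u^m\|_{1,k}\leq 2\,C_\mathrm{stab}^3(k)\,C_n C_\eps C_\mathrm{Nir}^3 k^{d-1}\|f\|_0^2\,\|u^m-u^{m-1}\|_{1,k}\leq \vartheta\,\|u^m-u^{m-1}\|_{1,k},
\end{equation*}
where the last inequality is exactly \eqref{eq:stabRHS2}. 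This geometric contraction implies that $\{u^m\}$ is Cauchy in $(H^1(D),\|\cdot\|_{1,k})$, and hence converges to some $u\in H^1(D)$. Passing to the limit in \eqref{eq:NLHweakIt} is straightforward for the linear parts, while for the cubic term one uses the strong $L^6$-convergence (Lemma~\ref{l:Nir}) together with the uniform $L^6$-bound of the iterates to conclude that $u$ solves \eqref{eq:NLHweak}. The stability estimate \eqref{eq:stabu} then follows from the lower semicontinuity of $\|\cdot\|_{1,k}$ applied to the uniform bound of Lemma~\ref{l:stabIt}.

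For uniqueness, given two solutions $u,\tilde u$ satisfying \eqref{eq:stabu}, I would split $|u|^2u-|\tilde u|^2\tilde u=|u|^2(u-\tilde u)+(|u|^2-|\tilde u|^2)\tilde u$ to rewrite the difference as
\begin{equation*}
\calBl(u;u-\tilde u,v)=\bigl(k^2 n\eps(|u|^2-|\tilde u|^2)\tilde u,v\bigr),
\end{equation*}
and repeat the H\"older--Nirenberg estimate to obtain $\|u-\tilde u\|_{1,k}\leq \vartheta\,\|u-\tilde u\|_{1,k}$, which forces $u=\tilde u$ since $\vartheta<1$. The main obstacle in the whole argument is tracking the correct powers of $k$ and the correct multiplicative constants in the cubic term: the factorization of $|u^m|^2-|u^{m-1}|^2$ produces two contributions (hence the factor $2$ distinguishing \eqref{eq:stabRHS2} from \eqref{eq:stabRHS}), and the exponent $k^{d-1}$ arises precisely from applying Lemma~\ref{l:Nir} three times. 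Once this bookkeeping matches the contraction threshold $\vartheta$, both existence and uniqueness follow from the same Banach-type argument.
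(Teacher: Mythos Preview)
Your proposal is correct and follows essentially the same route as the paper: build the iterative sequence from \eqref{eq:NLHweakIt}, derive the equation for $u^{m+1}-u^m$, use H\"older plus Lemma~\ref{l:Nir} together with the uniform bound from Lemma~\ref{l:stabIt} to obtain the contraction with factor $\vartheta$ via \eqref{eq:stabRHS2}, and repeat the same estimate for uniqueness. The only cosmetic differences are that the paper allows a general admissible $u^0$ (you specialize to $u^0=0$), and that your appeal to lower semicontinuity for \eqref{eq:stabu} is unnecessary since the convergence is strong in $\|\cdot\|_{1,k}$.
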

Before we prove the theorem, some remarks on the assumptions are in order.
We can again use $C_{\mathrm{stab}}(k)=(1-\vartheta)^{-1}C_{\mathrm{stab},0}(k)$ to equivalently write \eqref{eq:stabRHS2} in the following form
\[2\,C^3_{\mathrm{stab}, 0}(k)C_nC_\eps C^{3}_\mathrm{Nir}\,k^{d-1}\,\|f\|_0^2 \leq \vartheta(1-\vartheta)^3.\]
This is  sometimes termed as \emph{smallness of the data assumption} because it requires  the combination of wave number, refractive index, Kerr coefficient, and source term to be sufficiently small. 
Let us further elaborate on the influence of this condition in an ideal setting with $C_{\mathrm{stab},0}^3(k)C^{3}_\mathrm{Nir} \approx 1$. In this case, the assumption states that $n$, $\eps$, and/or $f$ need to be decreased in a reasonable way for large wavenumbers $k$. For fixed $n$ and $f$ and in the case $d=2$, the conditions leads to a relation $\eps \lesssim k^{-1}$ and therefore the nonlinear factor $nk^2\eps$ in~\eqref{eq:NLHweak} is allowed to be of order $k$. On the other hand, for $d=3$, we obtain $\eps \lesssim k^{-2}$ such that the term $nk^2\eps$ remains roughly constant when $k$ is increased. If the stability constants are not of order one, however, the nonlinear factor $nk^2\eps$ most likely needs to be decreased for growing $k$.
Note that the structure of the smallness condition is of course very similar to \cite{WuZ18} and the main difference is that we have a factor $k^{d-1}$ instead of the (better) factor $k^{d-2}$ in \cite{WuZ18}.
This comes from our different proof technique avoiding $L^\infty$-estimates and resorting to the Nirenberg inequality~\eqref{eq:Nir} instead.
\begin{proof}[Proof of Theorem \ref{t:wellposedNL}]
The proof follows the ideas of \cite[Thm.~2.5]{WuZ18}. Let $\{u^m\}_{m \in \N}$ be the sequence of solutions defined in \eqref{eq:NLHweakIt} starting from some $u^0\in H^1(D)$ satisfying~\eqref{eq:stabPhicontr}. We set $w^m := u^{m+1} - u^m$ and observe that $w^m$ solves
\begin{equation*}
\calBl(u^m;w^m,v) = (k^2n\eps(|u^m|^2 - |u^{m-1}|^2) u^m,v)
\end{equation*}
for all $v \in H^1(D)$. 
Since \eqref{eq:stabRHS2} implies \eqref{eq:stabRHS}, we obtain with Lemma~\ref{l:stabIt} and Lemma~\ref{l:Nir}
\begin{equation*}
\begin{aligned}
\|w^m\|_{1,k} &\leq C_\mathrm{stab}(k)\left\|k^2n\eps \big(|u^m|^2 - |u^{m-1}|^2\big) u^m\right\|_{0}\\
&\leq C_\mathrm{stab}(k)\,k^2C_nC_\eps\,\|u^m\|_{L^6(D)}\,\big(\|u^m\|_{L^6(D)} + \|u^{m-1}\|_{L^6(D)}\big)\,\|w^{m-1}\|_{L^6(D)}\\
&\leq 2\,C^3_\mathrm{stab}(k)C_nC_\eps C^{3}_\mathrm{Nir}\,k^{d-1}\,\|f\|_0^2\,\|w^{m-1}\|_{1,k}\\
&\leq \vartheta \|w^{m-1}\|_{1,k}.
\end{aligned}
\end{equation*}
This contraction property of $w^m$ yields $\|w^m\|_{1,k}\leq \vartheta^m\|w^0\|_{1,k}$.
By the (lower) triangle inequality, this implies -- as in the proof of Banach's fixed-point theorem \cite[Thm.~5.7]{Bre11} -- that $\{u^m\}_{m \in \N}$ is a Cauchy sequence with respect to $\|\cdot\|_{1,k}$ and converges to a~limit $u:=\lim_{m \to \infty} u^m \in H^1(D)$ which solves \eqref{eq:NLHweak}. The stability estimate \eqref{eq:stabu} directly follows from \eqref{eq:stabum}.

To show uniqueness, let $u$ and $\hat{u}$ be two solutions of~\eqref{eq:NLHweak}. Then, $w := u - \hat{u}$ solves 
\begin{equation*}
\calBl(u;w,v) = (k^2n\eps(|u|^2 - |\hat{u}|^2) \hat{u},v)
\end{equation*}
for all $v \in H^1(D)$. As above, we thus get
\begin{equation}
\|w\|_{1,k} \leq \vartheta \|w\|_{1,k},
\end{equation}
which implies $w = 0$ because of $\vartheta <1$.
\end{proof}
\begin{remark}\label{rem:fineFEM}
Since we do not exploit any higher regularity of solutions, the procedure in this section can verbatim be used to show existence and uniqueness of discrete solutions.
More precisely, let $V\subset H^1(D)$ be a closed subspace and further, let Assumption~\ref{a:stabLin0} be satisfied in $V$ and assume that~\eqref{eq:stabRHS2} holds accordingly. With an appropriate initial iterate $v^0 \in V$ (e.g., $v^0 = 0$) and the arguments in Proposition~\ref{prop:auxlinpb}, we directly obtain the existence and uniqueness of a Galerkin solution $v\in V$ to the Kerr-Helmholtz problem~\eqref{eq:NLHweak}.
Note, however, that the stability constants $C_{\mathrm{stab},0}(k)$ and thus $C_{\mathrm{stab}}(k)$ may generally depend on the subspace $V$.
\end{remark}
\begin{remark}\label{rem:Newton}
Let us emphasize that the linearization technique used above -- that will also be employed in the following -- is not the only possibly choice. The nonlinearity could also be dealt with methods such as a modified Newton's iteration as investigated, e.g., in~\cite{YuaL17} in the homogeneous setting.
\end{remark}
\section{Multiscale Approximations}
\label{s:multiscale}

In this section, we are concerned with the approximation of the solution to~\eqref{eq:NLHweak} in a finite-dimensional subspace. Since the present setting involves possible fine oscillations in the coefficients $A$, $n$, and $\eps$, a classical finite element approximation requires a resolution of any fine-scale features in order to provide reasonable approximations; see, e.g., \cite{BroGP17,PetV20} in the context of the linear Helmholtz problem.
Additionally, discretizations of the Helmholtz problem are subject to the so-called \emph{pollution effect} (see \cite{BabS97} and the references therein), so that for the lowest order finite element method the condition \mbox{$k^3h^2\lesssim 1$} typically has to be satisfied before convergence of the error is observed; see, e.g., \cite{BayGT85,AziKS88}.
Especially in the nonlinear setting where an iterative scheme is to be used, the required resolution of fine-scale features and the pollution effect lead to unfeasibly expensive computations. 

The multiscale construction that is presented in this section aims at resolving this issue by constructing appropriate approximation spaces on a coarse-scale level. The approach is based on the Localized Orthogonal Decomposition method, which was introduced in~\cite{MalP14} and further developed in~\cite{HenP13} for an elliptic model problem.
The coarse-scale level is characterized by the mesh size $H$ of a shape regular and quasi uniform quadrilateral/hexahedral mesh $\mathcal{T}_H$. This mesh is coarse in the sense that it does not resolve the fine-scale features of the coefficients $A$, $n$, and $\varepsilon$.
We work with quadrilateral/hexahedral meshes in the following, but we emphasize that all arguments and results carry over to simplicial meshes directly.
Denote by $Q_1(\calT_H)$ the space of possibly discontinuous functions that are polynomials of coordinate degree at most one on each element of~$\calT_H$.
We set $V_H:=Q_1(\calT_H)\cap H^1(D)$ the coarse Lagrange finite element space.
The LOD approach is based upon a~so-called quasi-interpolation operator $\IH\colon H^1(D) \to \VH$ with the following properties
\begin{align}
H^{-1}\,\|(\operatorname{Id} - \IH)v\|_{0,T} + |\IH v |_{1,T} &\leq \tilde C_\mathrm{int}\, |v|_{1,\Nb(T)}, \quad v \in H^1(D),\label{eq:IH1loc}\\ 
\|\IH v\|_{0,T} &\leq \tilde C_\mathrm{int}\, \|v\|_{0,\Nb(T)}, \quad v \in L^2(D), \label{eq:IH2loc}\\
\IH \circ \IH &= \IH, \label{eq:IH3}
\end{align}
where $\Nb(T)$ denotes the neighborhood of the element $T \in \calT_H$ defined by
\begin{equation*}
\Nb(T) := \bigcup\bigl\{K\in\calT_H\,\colon\,\overline{K}\cap\overline{T} \neq\emptyset\bigr\}.
\end{equation*}
The particular choice that we use in our numerical experiments is $\IH := \pi_H \circ \Pi_H$, where $\Pi_H$ is the piecewise $L^2$-projection onto $Q_1(\calT_H)$. Moreover, $\pi_H$ denotes an averaging operator that, for any $\vH\in Q_1(\calT_H)$ and any vertex $z$ of $\calT_H$, is characterized by
\begin{equation*}
\big(\pi_H(\vH)\big)(z) := \sum_{K\in\calT_H:\atop z \in \overline{K}} \big({\vH\vert}_K\big)(z)\cdot\frac{1}{\mathrm{card}\{T \in \calT_H\,\colon\, z \in \overline{T}\}}.
\end{equation*}
This choice of $\IH$ satisfies the properties \eqref{eq:IH1loc}--\eqref{eq:IH3}. We refer to \cite{Osw93,Bre94,ErnG17} for a proof of these conditions.
Note that from \eqref{eq:IH1loc}--\eqref{eq:IH2loc}, we can directly derive the following estimates on the whole domain $D$,
\begin{align}
H^{-1}\,\|(\operatorname{Id} - \IH)v\|_{0} + |\IH v |_{1} &\leq C_\mathrm{int}\, |v|_{1}, \quad v \in H^1(D),\label{eq:IH1}\\
\|\IH v\|_{0} &\leq C_\mathrm{int}\, \|v\|_{0}, \quad v \in L^2(D). \label{eq:IH2}
\end{align}

Based on the operator $\IH$, we define the so-called \emph{fine-scale space} $\W$ as its kernel with respect to $H^1$-functions, i.e.,
\begin{equation*}
\W := \ker {\IH\vert}_{H^1(D)}.
\end{equation*}
Next, we define an auxiliary corrector problem based on a function $\Phi$ that fulfills \eqref{eq:stabPhicontr} as follows. Let $\Phi \in H^1(D)$ be given and define the \emph{correction operator} $\calC_\Phi\colon H^1(D) \to \W$ for any $v \in H^1(D)$ as the solution to 
\begin{equation}\label{eq:corrPhi}
\calBl(\Phi;\calC_\Phi v, w) = \calBl(\Phi; v, w)
\end{equation}
for all $w \in \W$. 
Similarly, we also define the \emph{adjoint correction operator} $\calC^*_\Phi\colon H^1(D) \to \W$ for any $v \in H^1(D)$ as the solution to 
\begin{equation}\label{eq:corrPhi*}
\calBl(\Phi;w,\calC^*_\Phi v) = \calBl(\Phi;w, v)
\end{equation}
for all $w \in \W$ and remark that $\calC^*_\Phi v = \overline{\calC_\Phi \overline{v}}$. 

Note that \eqref{eq:corrPhi} and \eqref{eq:corrPhi*} are well-defined by the coercivity condition that is proved in the following lemma.
\begin{lemma}[coercivity condition on $\W$]\label{l:infsupW}
Let $\Phi \in H^1(D)$ fulfill~\eqref{eq:stabPhicontr}.
Assume that
\begin{equation}\label{eq:res}
kH \leq C_{\mathrm{res}}:=\frac{\sqrt{c_A}}{2C_\mathrm{int} \sqrt{C_n}}
\end{equation}
and 
\begin{equation}\label{eq:kerr}
2\,C_\mathrm{int}^{2-d/3}C_\mathrm{res}^{1-d/3}\,\vartheta H \leq C_{\mathrm{stab},0}(k)\,c_A.
\end{equation}
Then, it holds that 
\begin{equation}\label{eq:infsupW}
\Re\, \calBl(\Phi;w,w) \geq \gamma\,\|w\|^2_{1,k}
\end{equation}
for all $w \in \W$ with $\gamma:=\frac{c_A C_n}{4C_n+c_A}$.
\end{lemma}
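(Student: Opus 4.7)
The strategy is to exploit two consequences of the fine-scale membership $w \in \W = \ker \IH|_{H^1(D)}$: the $L^2$-approximation bound $\|w\|_0 \leq C_\mathrm{int} H|w|_1$ coming from \eqref{eq:IH1} (since $w = (\mathrm{Id}-\IH)w$) lets me absorb zeroth-order terms into the gradient, while combining this with Nirenberg bounds lets me tame the cubic Kerr term. I plan to show the stronger pointwise bound $\Re\calBl(\Phi;w,w) \geq \tfrac{c_A}{4}|w|_1^2$ and then convert to the $\|\cdot\|_{1,k}$-norm, which produces exactly the constant $\gamma$.

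To set up, note that testing against $w$ itself makes the boundary contribution $\iu(kw,w)_\Gamma$ purely imaginary, so
\[\Re\,\calBl(\Phi;w,w) = \int_D A|\nabla w|^2 - k^2\int_D n\,|w|^2 - k^2\int_D n\,\eps|\Phi|^2|w|^2.\]
The first term is at least $c_A |w|_1^2$. For the second, plugging $\|w\|_0 \leq C_\mathrm{int} H|w|_1$ into the mesh condition \eqref{eq:res} gives $k^2 C_n \|w\|_0^2 \leq C_n C_\mathrm{int}^2 C_\mathrm{res}^2 |w|_1^2 = \tfrac{c_A}{4}|w|_1^2$.

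The main obstacle is the Kerr term. I will apply the generalized H\"older inequality with exponents $(6,6,6,2)$,
\[\int_D |\Phi|^2|w|^2 \leq \|\Phi\|_{L^6}^2\,\|w\|_{L^6}\,\|w\|_0,\]
then estimate $\|\Phi\|_{L^6}^2$ using the $k$-weighted Nirenberg inequality from Lemma~\ref{l:Nir}, and estimate $\|w\|_{L^6}\|w\|_0$ by combining the original Nirenberg inequality $\|w\|_{L^6} \leq C_\mathrm{Nir}|w|_1^{d/3}\|w\|_0^{1-d/3}$ with $\|w\|_0 \leq C_\mathrm{int} H|w|_1$, so that
\[\|w\|_{L^6}\,\|w\|_0 \leq C_\mathrm{Nir} C_\mathrm{int}^{2-d/3} H^{2-d/3}\,|w|_1^2.\]
Multiplying by $k^2 C_n C_\eps$ and invoking the smallness assumption \eqref{eq:stabPhicontr} in the form $C_n C_\eps C_\mathrm{Nir}^3 \|\Phi\|_{1,k}^2 \leq \vartheta/(C_{\mathrm{stab},0}(k) k^{d-1})$ collapses all $k$-powers into $k^{1-d/3}H^{2-d/3}$. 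The key observation is the factorization $k^{1-d/3}H^{2-d/3} = H\,(kH)^{1-d/3}$ with nonnegative exponent for $d\in\{2,3\}$, so \eqref{eq:res} controls the parenthesized factor while \eqref{eq:kerr} then directly yields
\[k^2\int_D n\,\eps|\Phi|^2|w|^2 \leq \tfrac{c_A}{2}|w|_1^2.\]

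Assembling the three bounds gives $\Re\calBl(\Phi;w,w) \geq (c_A - \tfrac{c_A}{4} - \tfrac{c_A}{2})|w|_1^2 = \tfrac{c_A}{4}|w|_1^2$. Finally, from Step~2 we have $k^2\|w\|_0^2 \leq \tfrac{c_A}{4C_n}|w|_1^2$, hence $\|w\|_{1,k}^2 \leq \tfrac{4C_n+c_A}{4C_n}|w|_1^2$, and multiplying through produces the asserted bound with $\gamma = \tfrac{c_A C_n}{4C_n + c_A}$. The main difficulty throughout is the careful bookkeeping of $k$- and $H$-powers in the Kerr step; once the H\"older splitting $(6,6,6,2)$ and the identity $k^{1-d/3}H^{2-d/3} = H(kH)^{1-d/3}$ are in place, the two mesh conditions of the lemma are precisely what is needed to close the estimate.
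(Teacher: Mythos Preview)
Your proposal is correct and follows essentially the same approach as the paper's proof: the H\"older splitting $(6,6,6,2)$, the $k$-weighted Nirenberg bound on $\|\Phi\|_{L^6}$ combined with the raw Nirenberg bound on $\|w\|_{L^6}$, the factorization $k^{1-d/3}H^{2-d/3}=H(kH)^{1-d/3}$, and the final conversion $|w|_1^2 \geq \tfrac{4C_n}{4C_n+c_A}\|w\|_{1,k}^2$ all appear identically in the paper.
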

Let us comment on the assumptions of Lemma~\ref{l:infsupW}. First, \eqref{eq:res} requires $kH$ to be sufficiently small (of the order one), which is a natural resolution condition because one always needs some degrees of freedom per wave length to faithfully represent the wave.
Second, we emphasize that \eqref{eq:kerr} does \emph{not} result in a (notable) restriction of the mesh size in particular for growing wave numbers: the left-hand side factors $C_\mathrm{int}^{2-d/3}C_\mathrm{res}^{1-d/3}\vartheta$ are independent of $k$ and of order one. As discussed in Section~\ref{s:NLH}, the term $C_{\mathrm{stab}, 0}(k)$ on the right-hand side fulfills $C_{\mathrm{stab}, 0}(k) \gtrsim 1$ in most cases and often scales polynomially in $k$ for large enough wave numbers. Therefore, it remains constant or is even growing with increasing $k$.
Hence, \eqref{eq:kerr} is no resolution condition that requires $H$ to become smaller for large frequencies. In fact, \eqref{eq:res} will in practice, especially for large $k$, be the dominating and important condition.
Finally, we mention that the dependency of \eqref{eq:res} and \eqref{eq:kerr} on $c_A$ may be removed by the use of $A$-weighted norms and suitable $A$-weighted interpolation operators, which is relevant in the high contrast case, where $c_A$ might be very small. This, however, is not the focus of the present work and we refer to \cite{PetV20}, for instance, for details in the context of the linear Helmholtz problem. 
\begin{proof}[Proof of Lemma \ref{l:infsupW}]
By the definition of $\W = \ker {\IH\vert}_{H^1(D)}$ and \eqref{eq:IH1}, we have for any $w \in \W$
\begin{equation}\label{eq:proof1}
\|w\|_0 = \|(\operatorname{Id} - \IH)w\|_0 \leq C_\mathrm{int}\,H\,|w|_1. %\leq C_\mathrm{int}\,H\,\|w\|_{1,k}.
\end{equation}
From~\eqref{eq:proof1} and~\eqref{eq:res}, we also get that $|\cdot|_{1}$ is a norm on $\W$ which is equivalent to the full norm $\|\cdot\|_1$ as well as the energy norm $\|\cdot\|_{1,k}$ with constants that are independent of $k$ and $H$. In particular, for any $w \in \W$, 
\begin{equation}\label{eq:equiv1}
|w|_1 \leq \|w\|_{1} \leq (1+C_\mathrm{int}^2H^2)^{1/2} \,|w|_{1} \leq (1+C_\mathrm{int}^2)^{1/2} \,|w|_{1}
\end{equation}
and
\begin{equation}\label{eq:equiv1k}
|w|_1 \leq \|w\|_{1,k} \leq (1+C_\mathrm{int}^2C_\mathrm{res}^2)^{1/2} \,|w|_{1}.
\end{equation}
Further, using Lemma~\ref{l:Nir}, \eqref{eq:proof1}, and \eqref{eq:stabPhicontr}, we have that 
\begin{equation}\label{eq:proof2}
\begin{aligned}
%(k^2n\eps|\Phi|^2v,\vB) \leq 
k^{2}C_nC_\eps \|\Phi\|_{L^6(D)}^2&\|w\|_{L^6(D)}\|w\|_0 \\
&\leq C_nC_\eps C^2_\mathrm{Nir}C_\mathrm{int}H k^{2d/3}\,\|\Phi\|_{1,k}^2 \|w\|_{L^6(D)}\,|w|_{1}\\
&\leq C_nC_\eps C^3_\mathrm{Nir}C_\mathrm{int}H k^{d-1}\, \|\Phi\|_{1,k,D}^2\, |w|_1^{d/3} k^{1-d/3}\|w\|_0^{1-d/3}\, |w|_1\\
&\leq C_nC_\eps C^3_\mathrm{Nir}C_\mathrm{int}^{2-d/3}H (k^{1-d/3}H^{1-d/3}) k^{d-1}\,\|\Phi\|_{1,k}^2\,|w|_1^2\\
&\leq  C_\mathrm{int}^{2-d/3}C_\mathrm{res}^{1-d/3} (C_{\mathrm{stab}, 0}(k))^{-1}\vartheta H|w|_1^2
\leq \frac{c_a}{2}|w|_1^2,
\end{aligned}
\end{equation}
where we applied assumption \eqref{eq:kerr} in the last step.
Therefore, with \eqref{eq:equiv1}, \eqref{eq:proof2}, and the inclusion $H^1(D)\subset L^6(D)$, we obtain for any $w \in \W$
\begin{equation*}
\begin{aligned}
\Re\, \calBl(\Phi;w,w) &\geq 
\Re\, \calBl(\Phi;w,w) =
(A\nabla w, \nabla w) - (k^2n(1+\eps|\Phi|^2)w,w)\\
&\geq c_A|w|^2_1 - C_\mathrm{int}^2C_\mathrm{res}^2C_n\,|w|_1^2 - \tfrac{c_A}{2}\,|w|_1^2 \\
& = \Big(\frac{c_A}{2} - C_\mathrm{int}^2C_\mathrm{res}^2C_n\Big)|w|_1^2.
\end{aligned}
\end{equation*}
The assertion follows with the definitions of $C_\mathrm{res}$ and $\gamma$.
\end{proof}

\subsection{Stability and error estimates for the auxiliary multiscale solution}

Based on $\Phi \in H^1(D)$ as above, we define the multiscale solution corresponding to \eqref{eq:NLHweakLin} as the solution $u_{\Phi,H} \in (1 - \calC_\Phi)\VH$ of
\begin{equation}\label{eq:LODaux}
\calBl(\Phi;u_{\Phi,H},\vH) = (f,\vH)
\end{equation}
for all $\vH \in (1 - \calC^*_\Phi)\VH$.

Based on Lemma~\ref{l:infsupW} and Lemma~\ref{l:infsup}, it is possible to show stability and error estimates for the solution $u_{\Phi,H}$ of~\eqref{eq:LODaux} as stated in the following lemma.

\begin{lemma}[Stability and approximation properties of the auxiliary multiscale solution]\label{l:stabapproxLOD}
Let $u_\Phi \in H^1(D)$ be the solution to~\eqref{eq:NLHweakLin} and $u_{\Phi,H} \in H^1(D)$ the solution to~\eqref{eq:LODaux}. If the assumptions of Lemma~\ref{l:infsupW} hold, we have that
\begin{equation}\label{eq:stabLODPhi}
\|u_{\Phi,H}\|_{1,k} \leq C_\mathrm{LOD}(k)\,\|f\|_0.
\end{equation}
with $C_\mathrm{LOD}(k):=\frac{C_\calB}{k\delta(k)\gamma}$ with $C_\calB$ from Proposition~\ref{prop:auxlinpb}, $\delta(k)$ from Lemma~\ref{l:infsup}, and $\gamma$ from Lemma~\ref{l:infsupW}.
Further, 
\begin{equation}\label{eq:errLODPhi}
\|u_{\Phi} - u_{\Phi,H}\|_{1,k} \leq C_\mathrm{err} \, H \,\|f\|_0, 
\end{equation}
where $C_\mathrm{err}:=\gamma^{-1}C_{\mathrm{int}}$.
\end{lemma}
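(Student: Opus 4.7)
The approach rests on the $\calBl(\Phi;\cdot,\cdot)$-orthogonal direct sum decompositions $H^1(D) = (1-\calC_\Phi) V_H \oplus \W$ (trial) and $H^1(D) = (1-\calC^*_\Phi) V_H \oplus \W$ (test). Both follow from the identity $v = (1-\calC_\Phi)\IH v + \bigl((1-\IH)v + \calC_\Phi \IH v\bigr)$ together with the inclusions $(1-\IH)v,\calC_\Phi\IH v\in\W$ guaranteed by $\IH\circ\IH=\IH$, and the $\calBl$-orthogonality between the two summands is immediate from the defining equations~\eqref{eq:corrPhi} and~\eqref{eq:corrPhi*}.

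For the error bound~\eqref{eq:errLODPhi}, I would set $\tilde u_H := (1-\calC_\Phi)\IH u_\Phi$ and observe that $u_\Phi - \tilde u_H = (1-\IH)u_\Phi + \calC_\Phi\IH u_\Phi \in \W$. Since $(1-\calC^*_\Phi)V_H$ is $\calBl$-orthogonal to $\W$, the function $\tilde u_H$ already satisfies~\eqref{eq:LODaux}. Uniqueness of the LOD solution (via a discrete inf-sup condition inherited from Lemma~\ref{l:infsup} by choosing test functions in $(1-\calC^*_\Phi)V_H$ and exploiting the $\calBl$-orthogonality with $\W$) then gives $\tilde u_H = u_{\Phi,H}$. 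Consequently $e := u_\Phi - u_{\Phi,H} \in \W$ satisfies $\calBl(\Phi;e,w) = (f,w)$ for every $w\in\W$; testing with $w=e$, applying the coercivity~\eqref{eq:infsupW}, and bounding $\|e\|_0 \leq C_\mathrm{int} H\,\|e\|_{1,k}$ via \eqref{eq:IH1} (valid because $e\in\W=\ker\IH$) yields $\gamma\|e\|_{1,k}^2 \leq C_\mathrm{int} H\|f\|_0\|e\|_{1,k}$ and hence the claim with $C_\mathrm{err}=\gamma^{-1}C_\mathrm{int}$.

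For the stability estimate~\eqref{eq:stabLODPhi}, I would apply the $H^1(D)$-inf-sup from Lemma~\ref{l:infsup} to $u_{\Phi,H}$, decompose any test function $z \in H^1(D)$ as $z = (1-\calC^*_\Phi)\IH z + z_\W$ with $z_\W\in\W$, and use $\calBl$-orthogonality to drop the $z_\W$-contribution. The LOD equation~\eqref{eq:LODaux} then collapses $\calBl(\Phi;u_{\Phi,H},z)$ to $(f,(1-\calC^*_\Phi)\IH z)$, which I would bound by $k^{-1}\|f\|_0\,\|(1-\calC^*_\Phi)\IH z\|_{1,k}$ via the $L^2$-$L^2$ duality and $\|\cdot\|_0\leq k^{-1}\|\cdot\|_{1,k}$. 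The adjoint corrector is then controlled through $\|\calC^*_\Phi\IH z\|_{1,k} \leq \gamma^{-1}C_\calB\|\IH z\|_{1,k}$, obtained by substituting $w=\calC^*_\Phi\IH z$ into the defining relation~\eqref{eq:corrPhi*} and combining the coercivity~\eqref{eq:infsupW} with the continuity constant $C_\calB$ from Proposition~\ref{prop:auxlinpb}; together with \eqref{eq:IH1}--\eqref{eq:IH2} and the resolution condition~\eqref{eq:res}, this yields the stability bound with leading factor $C_\calB/(k\delta(k)\gamma)$. The main technical obstacle is the careful bookkeeping needed to absorb the interpolation and resolution constants $C_\mathrm{int}$, $C_\mathrm{res}$ into this leading factor while keeping the $k$-dependence transparent.
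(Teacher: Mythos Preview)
Your proposal is correct and follows essentially the same approach as the paper. For the error estimate, your argument is identical: identify $u_{\Phi,H}=(1-\calC_\Phi)\IH u_\Phi$ so that the error lies in~$\W$, test against itself, and apply coercivity~\eqref{eq:infsupW} together with~\eqref{eq:IH1}. For stability, the paper first packages the argument as a discrete inf-sup condition on $(1-\calC_\Phi)V_H\times(1-\calC^*_\Phi)V_H$ and then applies it, whereas you apply the continuous inf-sup~\eqref{eq:infsup} directly to $u_{\Phi,H}$ and decompose the test function; the underlying mechanics (orthogonality of $(1-\calC_\Phi)V_H$ and $\W$, the identity $(1-\calC^*_\Phi)\IH z=(1-\calC^*_\Phi)z$, and the corrector bound $\|\calC^*_\Phi v\|_{1,k}\leq C_\calB\gamma^{-1}\|v\|_{1,k}$) are the same. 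The paper also defers the precise constant bookkeeping to~\cite{Pet17}, so your caveat about absorbing interpolation constants is in line with how the paper treats this point.
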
 

\begin{proof}
The proof is similar as for linear Helmholtz problems in \cite{Pet17,PetV20} and we present it here for convenience and to make the material self-consistent.

\emph{Proof of \eqref{eq:stabLODPhi}}: We prove the following inf-sup condition
\begin{equation*}
\adjustlimits\inf_{v_H\in V_H}\sup_{w_H\in V_H}\frac{\Re\,\calBl(\Phi;(1-\calC_\Phi)v_H, (1-\calC_\Phi^*)w_H)}{\|(1-\calC_\Phi)v_H\|_{1,k}\,\|(1-\calC_\Phi^*)w_H\|_{1,k}}\geq \frac{\delta(k)\gamma}{C_\calB}.
\end{equation*}
Recalling that $u_{\Phi, H}\in (1-\calC_\Phi)V_H$, the inf-sup condition implies the existence of $v_H\in V_H$ with $\|(1-\calC^*_\Phi)v_H\|_{1,k}=1$ such that
\begin{align*}
\|u_{\Phi, H}\|_{1,k}\leq \frac{C_\calB}{\delta(k)\gamma}\Re \calBl(\Phi;u_{\Phi, H}, (1-\calC^*_\Phi)v_H)=\frac{C_\calB}{\delta(k)\gamma}\Re(f,(1-\calC^*_\Phi)v_H)\leq \frac{C_\calB}{k\delta(k)\gamma}\|f\|_0,
\end{align*}
which yields \eqref{eq:stabLODPhi}.

To show the inf-sup condition, let $v_H\in V_H$ be arbitrary but fixed. Due to \eqref{eq:infsup}, there exists $w\in H^1(D)$ with $\|w\|_{1,k}=1$ such that
\[\Re\, \calBl(\Phi;(1-\calC_\Phi)v_H, w)\geq \delta(k)\|(1-\calC_\Phi)v_H\|_{1,k}.\]
We set $w_H=\IH w$ and observe that $(1-\calC^*_\Phi)w_H=(1-\calC^*_\Phi)w$. Therefore, with \eqref{eq:infsup} and the continuity of $\calBl$, we deduce that
\begin{align*}
\Re\, \calBl(\Phi;(1-\calC_\Phi)v_H, (1-\calC^*_\Phi)w_H)=\Re\,\calBl(\Phi;(1-\calC_\Phi)v_H, w)\geq \delta(k)\|(1-\calC_\Phi)v_H\|_{1,k}.
\end{align*} 
The inf-sup condition follows by the norm equivalence
\[\|(1-\calC^*_\Phi)w_H\|_{1,k}\leq C_\calB\gamma^{-1}\|w\|_{1,k}\]
due to the stability of the corrector problems; see, e.g., \cite{Pet17} for details.

\emph{Proof of \eqref{eq:errLODPhi}}: A simple calculation shows that $u_{\Phi,H}=(1-\calC_\Phi)\IH u_\Phi=(1-\calC_\Phi)u$ and hence, $u-u_{\Phi, H}=\calC_\Phi u\in \W$.
By the definition of $\calC_\Phi$ in \eqref{eq:corrPhi}, the error $u-u_{\Phi,H}$ therefore satisfies
\[\calBl(\Phi;\calC_\Phi u, \calC_\Phi u)=\calBl(\Phi; u, \calC_\Phi u)=(f, \calC_\Phi u).\]
Lemma \ref{l:infsupW} and \eqref{eq:IH1} then yield
\[\|u-u_{\Phi, H}\|_{1,k}^2\leq \gamma^{-1}(f, \calC_\Phi u)\leq \gamma^{-1}C_\mathrm{int} H \|f\|_0 \|u-u_{\Phi, H}\|_{1,k},\]
which finishes the proof.
\end{proof}

\subsection{Iterative multiscale approximation}
\label{ss:iterativemultiscale}

In this subsection, we define a sequence of multiscale solutions $\{\uH^m\}_{m \in \N}$. To this end, we abbreviate $\calC_{m} := \calC_{\scalebox{.65}{$\uH^{m}$}}$ and define \mbox{$\uH^m \in (1 - \calC_{m-1})\VH$} as the solution to
\begin{equation}\label{eq:NLHweakItLOD}
\calBl(\uH^{m-1};\uH^m,\vH) = (f,\vH) 
\end{equation}
for all $\vH \in (1-\calC^*_{m-1})\VH$ and given $\uH^0 \in H^1(D)$ as, e.g., a first-order approximation of $u^0$,
\begin{equation}\label{eq:LODinitErr}
\|\uH^0 - u^0\|_{1,k} \leq C_0\,H.
\end{equation}
Note that the functions $\{\uH^m\}_{m \in \N}$ fulfill similar stability properties as in Lemma~\ref{l:stabIt}. However, the stability constant~$C_\mathrm{stab}(k)$ needs to be replaced by~$C_\mathrm{LOD}(k)$ from Lemma~\ref{l:stabapproxLOD}.
We note that $C_{\mathrm{LOD}}(k)\approx C_\mathrm{stab}(k)$, i.e., those constants have the same scaling with respect to~$k$.

Based on these observations, we prove in the following theorem that the solutions $\{\uH^m\}_{m \in \N}$ are close to the iterative solutions $\{u^m\}_{m \in \N}$ of~\eqref{eq:NLHweakIt} with respect to the mesh size $H$.

\begin{theorem}\label{t:errItLOD} 
Define $\tilde C_\mathrm{stab}(k) := \max\{C_\mathrm{stab}(k),C_\mathrm{LOD}(k)\}$ with the constants $C_\mathrm{stab}(k)$ and $C_\mathrm{LOD}(k)$ from Proposition~\ref{prop:auxlinpb} and Lemma~\ref{l:stabapproxLOD}, respectively. Further, assume that 
\begin{equation}
2\,\tilde C^3_{\mathrm{stab}}(k)C_nC_\eps C^{3}_\mathrm{Nir}\,k^{d-1}\,\|f\|_0^2 \leq \vartheta. \label{eq:boundRHS}
\end{equation}
Then, we have that
\begin{equation}\label{eq:errit}
\|\uH^m - u^m\|_{1,k} \leq \vartheta^m\,\|\uH^0 - u^0\|_{1,k} + \frac{1}{1-\vartheta}C_\mathrm{err}\, H \,\|f\|_0
\end{equation}
for all $m \in \N$.
\end{theorem}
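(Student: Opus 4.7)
The plan is to proceed by induction on $m$ and, at each step, to compare $\uH^m$ with $u^m$ via an intermediate function, namely the exact solution $\tilde u^m \in H^1(D)$ of the auxiliary linear problem~\eqref{eq:NLHweakLin} with frozen data $\Phi = \uH^{m-1}$. The two resulting error contributions each have a clear interpretation: $\uH^m - \tilde u^m$ is a pure discretization error for a fixed linearization (governed by Lemma~\ref{l:stabapproxLOD}), while $\tilde u^m - u^m$ measures the sensitivity of the linearized problem with respect to its argument $\Phi$ and should exhibit the contraction structure already encountered in the proof of Theorem~\ref{t:wellposedNL}.

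A preliminary task is to verify that condition~\eqref{eq:stabPhicontr} holds for every iterate, so that Proposition~\ref{prop:auxlinpb} and Lemma~\ref{l:stabapproxLOD} are applicable throughout. This follows by induction from assumption~\eqref{eq:boundRHS} together with Lemma~\ref{l:stabIt} on the continuous side and its direct multiscale analog (mentioned after~\eqref{eq:LODinitErr}, where $C_{\mathrm{stab}}(k)$ is replaced by $C_{\mathrm{LOD}}(k)$) on the discrete side; together they yield $\|u^m\|_{1,k}, \|\uH^m\|_{1,k} \leq \tilde C_\mathrm{stab}(k)\,\|f\|_0$ for all $m$, which is exactly the information encoded in $\tilde C_\mathrm{stab}(k)$.

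For the first error contribution, $\tilde u^m$ is the exact solution and $\uH^m$ the LOD approximation of the same auxiliary linear problem with $\Phi = \uH^{m-1}$. By Lemma~\ref{l:stabapproxLOD}, this term is bounded by $C_\mathrm{err}\,H\,\|f\|_0$. For the second contribution, subtracting the equations characterizing $\tilde u^m$ and $u^m$ and using that $u^m$ solves~\eqref{eq:NLHweakIt} with $\Phi = u^{m-1}$ gives
\begin{equation*}
\calBl(\uH^{m-1};\tilde u^m - u^m, v) = \bigl(k^2 n\eps(|\uH^{m-1}|^2 - |u^{m-1}|^2)\,u^m,\, v\bigr).
\end{equation*}
Applying the stability estimate~\eqref{eq:stabLin}, the generalized Hölder inequality $|a^2 - b^2| = |a-b|\,|a+b|$, and Lemma~\ref{l:Nir} to the $L^6$-norms, and then inserting the bounds $\|u^m\|_{1,k}, \|u^{m-1}\|_{1,k}, \|\uH^{m-1}\|_{1,k} \leq \tilde C_\mathrm{stab}(k)\,\|f\|_0$, one obtains
\begin{equation*}
\|\tilde u^m - u^m\|_{1,k} \leq 2\,\tilde C^3_\mathrm{stab}(k)\,C_n C_\eps C^3_\mathrm{Nir}\,k^{d-1}\,\|f\|_0^2\;\|\uH^{m-1} - u^{m-1}\|_{1,k} \leq \vartheta\,\|\uH^{m-1} - u^{m-1}\|_{1,k},
\end{equation*}
where the last inequality uses precisely~\eqref{eq:boundRHS}. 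Combining the two contributions yields the one-step recursion $\|\uH^m - u^m\|_{1,k} \leq \vartheta\,\|\uH^{m-1} - u^{m-1}\|_{1,k} + C_\mathrm{err}\,H\,\|f\|_0$, and iterating together with the geometric series $\sum_{j=0}^{m-1}\vartheta^j \leq (1-\vartheta)^{-1}$ delivers~\eqref{eq:errit}.

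The step that requires the most care is the contraction estimate for $\tilde u^m - u^m$: one must ensure that the Hölder/Nirenberg bookkeeping reproduces exactly the same prefactor as in the proof of Theorem~\ref{t:wellposedNL}, so that assumption~\eqref{eq:boundRHS} can be invoked verbatim to give the contraction constant $\vartheta$. This is also the reason why $\tilde C_\mathrm{stab}(k)$ is defined as the maximum of the continuous and LOD stability constants: the argument needs a uniform bound on all three iterates $u^m$, $u^{m-1}$, $\uH^{m-1}$ simultaneously.
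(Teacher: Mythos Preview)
Your proposal is correct and follows essentially the same approach as the paper: introduce the auxiliary exact solution of~\eqref{eq:NLHweakLin} with frozen argument $\Phi=\uH^{m-1}$ (the paper calls it $w^m$, you call it $\tilde u^m$), bound the discretization part by Lemma~\ref{l:stabapproxLOD}, and bound the linearization part by the contraction argument from Theorem~\ref{t:wellposedNL} combined with the uniform stability bounds. The only cosmetic difference is that the paper writes the error equation for $u^m-w^m$ with respect to $\calBl(u^{m-1};\cdot,\cdot)$ (so that $w^m$ appears in the right-hand side), whereas you write it with respect to $\calBl(\uH^{m-1};\cdot,\cdot)$ (so that $u^m$ appears); both variants lead to the same prefactor and the same use of~\eqref{eq:boundRHS}.
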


\begin{proof}
Let $m \in \N$ be fixed. 
We define an auxiliary solution $w^m \in H^1(D)$ as the solution to
\begin{equation}\label{eq:itAux}
\calBl(\uH^{m-1};w^m,v) = (f,v) 
\end{equation}
for all $v \in H^1(D)$. Note that due to \eqref{eq:boundRHS} and by Proposition~\ref{prop:auxlinpb} (as well as its adapted version for the series of multiscale solutions), we have the following stability estimates,  
\begin{align}
\|u^{m-1}_{H}\|_{1,k} &\leq C_\mathrm{LOD}(k)\,\|f\|_0,\label{eq:stab1}\\
\|u^m\|_{1,k} &\leq C_\mathrm{stab}(k)\,\|f\|_0,\label{eq:stab2}\\
\|w^m\|_{1,k} &\leq C_\mathrm{stab}(k)\,\|f\|_0.\label{eq:stab3}
\end{align}
Further, the error $e^m = u^m-w^m$ solves
\begin{equation*}
\begin{aligned}
\calBl(u^{m-1};e^m,v) &= \calBl(\uH^{m-1};w^m,v) - \calBl(u^{m-1};w^m,v)\\
&= (k^2\eps(|u^{m-1}|^2 - |\uH^{m-1}|^2) w^m,\vB)
\end{aligned}
\end{equation*}
for all $v \in H^1(D)$. 
Reusing ideas from Theorem~\ref{t:wellposedNL} and with the boundedness of $w$, $u^{m-1}$, and $\uH^{m-1}$ as quantified in \eqref{eq:stab1}--\eqref{eq:stab3}, we obtain
\begin{align*}
\|e^m\|_{1,k} &\leq C_\mathrm{stab}(k)\,\|k^2\eps(|u^{m-1}|^2 - |\uH^{m-1}|^2) w^m\|_{0}\\
&\leq  C_\mathrm{stab}(k)k^2C_nC_\eps\|w^m\|_{L^6(D)}\\*
&\qquad\cdot\big(\|u^{m-1}\|_{L^6(D)} + \|\uH^{m-1}\|_{L^6(D)}\big)\|\uH^{m-1} - u^{m-1}\|_{L^6(D)}\\
&\leq \vartheta\,\|\uH^{m-1} - u^{m-1}\|_{1,k}.
\end{align*}
Moreover, with Lemma~\ref{l:stabapproxLOD} we have that
\begin{equation*}
\|u^m_H - w^m\|_{1,k} \leq C_\mathrm{err} \, H \,\|f\|_0.
\end{equation*}
The previous two estimates and the triangle inequality result in
\begin{align*}
\|\uH^m - u^m\|_{1,k} &\leq \|e^m\|_{1,k} + \|u^m_H - w^m\|_{1,k}\\
&\leq \vartheta\,\|\uH^{m-1} - u^{m-1}\|_{1,k} + C_\mathrm{err} \, H \,\|f\|_0\\
&\leq \vartheta^m\,\|\uH^0 - u^0\|_{1,k} + C_\mathrm{err} \, H \,\|f\|_0\sum_{n \in \N_0}\vartheta^n\\
& \leq \vartheta^m\,\|\uH^0 - u^0\|_{1,k} + \frac{1}{1-\vartheta}C_\mathrm{err}\, H \,\|f\|_0. \qedhere
\end{align*}
\end{proof}

With Theorem~\ref{t:errItLOD}, we can directly derive the following result.

\begin{corollary}
Suppose that the assumptions of Theorem~\ref{t:errItLOD} are satisfied. Let $\uH^m$ be the iterative multiscale approximation in step $m$ starting from the initial value $\uH^0=0$.
Then, it holds that
\[\|u-\uH^m\|_{1,k}\leq \frac{1}{1-\vartheta}(C_{\mathrm{stab}, 0}(k)\vartheta^m+C_\mathrm{err} H)\|f\|_0.\]
That is, the choice $m \gtrsim |\log H|/|\log \vartheta|$ provides an approximation of $u$ with accuracy $\mathcal{O}(H)$. 
\end{corollary}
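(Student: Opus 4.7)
The plan is to combine the two sources of error via the triangle inequality: on the one hand, the discretization error $\|u^m - u_H^m\|_{1,k}$ which is controlled by Theorem~\ref{t:errItLOD}, and on the other, the fixed-point iteration error $\|u - u^m\|_{1,k}$. For the latter, I would revisit the Cauchy-sequence argument from the proof of Theorem~\ref{t:wellposedNL} and make it quantitative starting from the same initial iterate $u^0 = 0$ that is used for the multiscale iteration.

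First, I would fix $u^0 := 0 \in H^1(D)$ as the continuous initial iterate, so that $\|u_H^0 - u^0\|_{1,k} = 0$. Since condition~\eqref{eq:stabPhicontr} is trivially satisfied for $\Phi = 0$, the sequence $\{u^m\}_{m \in \N}$ from~\eqref{eq:NLHweakIt} is well-defined, and assumption~\eqref{eq:boundRHS} implies in particular~\eqref{eq:stabRHS2}, so Theorem~\ref{t:wellposedNL} applies and $u^m \to u$ in $\|\cdot\|_{1,k}$. From the estimate $\|w^m\|_{1,k} \leq \vartheta^m\,\|w^0\|_{1,k}$ with $w^m = u^{m+1} - u^m$ established in that proof, the standard telescoping/geometric-series estimate of Banach's fixed-point theorem yields
\begin{equation*}
\|u - u^m\|_{1,k} \;\leq\; \sum_{j=m}^{\infty} \|u^{j+1} - u^{j}\|_{1,k} \;\leq\; \frac{\vartheta^m}{1-\vartheta}\,\|u^1 - u^0\|_{1,k}.
\end{equation*}
With $u^0 = 0$, the first iterate $u^1$ solves exactly the classical linear Helmholtz problem~\eqref{eq:NLHweakLin0}, so Assumption~\ref{a:stabLin0} gives $\|u^1 - u^0\|_{1,k} = \|u^1\|_{1,k} \leq C_{\mathrm{stab},0}(k)\,\|f\|_0$. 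This yields the iteration-error bound
\begin{equation*}
\|u - u^m\|_{1,k} \;\leq\; \frac{\vartheta^m}{1-\vartheta}\,C_{\mathrm{stab},0}(k)\,\|f\|_0.
\end{equation*}

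Next, Theorem~\ref{t:errItLOD} applied with $\|u_H^0 - u^0\|_{1,k} = 0$ gives directly
\begin{equation*}
\|u_H^m - u^m\|_{1,k} \;\leq\; \frac{1}{1-\vartheta}\,C_\mathrm{err}\,H\,\|f\|_0.
\end{equation*}
A final application of the triangle inequality $\|u - u_H^m\|_{1,k} \leq \|u - u^m\|_{1,k} + \|u^m - u_H^m\|_{1,k}$ and factoring out $\tfrac{1}{1-\vartheta}\|f\|_0$ delivers the claimed estimate. The corollary about the logarithmic choice of $m$ then follows from requiring $C_{\mathrm{stab},0}(k)\vartheta^m \lesssim H$, i.e., $m \gtrsim |\log H|/|\log \vartheta|$ (absorbing the $k$-dependent factor $C_{\mathrm{stab},0}(k)$ into the constant via its logarithm).

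There is no real obstacle here since all the hard analytical work is already done in Theorem~\ref{t:wellposedNL} and Theorem~\ref{t:errItLOD}; the only subtlety is to notice that the choice $u^0 = u_H^0 = 0$ is admissible (condition~\eqref{eq:stabPhicontr} is automatic) and that it simultaneously kills the initial-error term in~\eqref{eq:errit} and reduces the quantification of $\|u^1 - u^0\|_{1,k}$ to an application of the linear stability estimate~\eqref{eq:stabLin0}.
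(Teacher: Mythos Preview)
Your proposal is correct and follows essentially the same approach as the paper: triangle inequality to split $\|u-u_H^m\|_{1,k}$ into the fixed-point iteration error (bounded via the geometric-series estimate from Theorem~\ref{t:wellposedNL} together with~\eqref{eq:stabLin0}) and the discretization error (bounded by Theorem~\ref{t:errItLOD} with $u^0=u_H^0=0$). Your write-up is in fact slightly more explicit than the paper's in verifying the admissibility of $u^0=0$ and the implication \eqref{eq:boundRHS}$\Rightarrow$\eqref{eq:stabRHS2}.
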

\begin{proof}
Approximate $u$ by the fixed-point iteration starting from the initial value $u^0=0$.
By the triangle inequality, we have $\|u-\uH^m\|_{1,k}\leq \|u-u^m\|_{1,k}+\|u^m-\uH^m\|_{1,k}$, where the latter term is estimated with Theorem~\ref{t:errItLOD}.
The a priori estimate for the fixed-point iteration together with \eqref{eq:stabLin0} then yields
\[\|u-u^m\|_{1,k}\leq \frac{\vartheta^m}{1-\vartheta}\|u^1-u^0\|_{1,k}\leq \frac{\vartheta^m}{1-\vartheta}C_{\mathrm{stab}, 0}(k)\|f\|_0.\qedhere\]
\end{proof}

We emphasize that the multiscale procedure introduced in this section (cf.~e.g.~\eqref{eq:LODaux}) is \emph{ideal} in the sense that for given $\Phi \in H^1(D)$, the correction operators $\calC_\Phi$ and $\calC^*_\Phi$ defined in \eqref{eq:corrPhi} and \eqref{eq:corrPhi*}, respectively, are global operations on the infinite-dimensional space $\W$. For practical computations, these operators are defined element-wise and truncated to local patches with $\ell$ layers of elements around each element. 
We will introduce and use this localization in the following subsection for our adaptive iterative multiscale approximation, which covers~\eqref{eq:NLHweakItLOD} as special case.

\begin{remark}\label{rem:finedisc}
Note that in practical computations, the correction operators also need to be discretized on a fine mesh with mesh size $h \ll H$ that resolves possible oscillations. This is required in order to obtain a fully practical method. We omit this last step in the present work and refer to~\cite{GalP15} for the corresponding analysis.
\end{remark}

\subsection{Adaptive iterative multiscale approximation}\label{ss:adaptive}
As mentioned, the goal of this subsection is two-fold. First, we localize the computation of the correction operators. Second, we introduce at the same time a strategy to (locally) decide where these correction operators need to be updated from one iteration step to the other.
The motivation for this adaptive strategy is that updating the correction operators in each step is computationally rather expensive and might be unnecessary if the multiscale solutions only change in certain parts of the domain.
Inspired by \cite{HelM19,HelKM20}, we will introduce an error indicator for the corrector if the function $\Phi\in H^1(D)$ is ``perturbed''. 
The numerical analysis of the resulting adaptive iterative multiscale algorithm is based on Theorem~\ref{t:errItLOD} from the previous section and the observation that the localization as well as the updating strategy both are small perturbations thereof.

We first introduce the localization of the correction operators.
Recall the definition of $\Nb(T)$ from Section \ref{s:multiscale}. We inductively define the $\ell$-layer patch for $\ell\in \N$ via 
\begin{equation*}
\Nb^\ell(T)=\Nb(\Nb^{\ell-1}(T)),\quad \ell \geq 2,\quad \text{ and }\quad\Nb^1(T)=\Nb(T).
\end{equation*}
The kernel space $\W$ is restricted to such element patches via
\begin{equation*} 
\W(\Nb^\ell(T)):=\{w\in \W \,: \,w\vert_{D\setminus \Nb^\ell(T)}=0 \}.
\end{equation*}
Given a function $\Phi\in H^1(D)$ and an element $T\in \calT_H$, we define the $\ell$-layer element corrector $\calC_{\Phi, T}^\ell$ via the solution to a truncated and element-based corrector problem as follows. For any $v_H\in V_H$, we seek $\calC_{\Phi, T}^\ell v_H\in \W(\Nb^\ell(T))$ that solves
\begin{equation}\label{eq:elementcorrector}
\calB_{\mathrm{lin}, \Nb^\ell(T)}(\Phi; \calC_{\Phi, T}^\ell v_H, w)=\calB_{\mathrm{lin}, T}(\Phi; v_H, w) 
\end{equation}
for all $w\in \W(\Nb^{\ell}(T))$.
Here, $\calB_{\mathrm{lin}, S}$ denotes the restriction of $\calBl$ to the subdomain $S\subset D$.
The globally defined $\ell$-layer corrector $\calC_\Phi^\ell$ is then given as the sum of the element correctors, i.e., 
\begin{equation*} 
\calC_\Phi^\ell:=\sum_{T\in \calT_H}\calC_{\Phi, T}^\ell.
\end{equation*}
The adjoint correction operator $\calC_\Phi^{\ell,*}$ is defined analogously.
If $\Phi\in H^1(D)$ satisfies \eqref{eq:stabPhicontr} and the conditions \eqref{eq:res}--\eqref{eq:kerr} are fulfilled, the coercivity \eqref{eq:infsupW} from Lemma~\ref{l:infsupW} implies that \eqref{eq:elementcorrector} is well-posed.
Note that in order to compute~$\calC_{\Phi, T}^\ell$ it suffices to know $\Phi$ on $\Nb^\ell(T)$. 
The error between $\calC_\Phi^\ell$ and $\calC_\Phi$ is decaying exponentially in $\ell$, which directly carries over from the linear case \cite{GalP15,BroGP17}.
\begin{lemma}\label{l:trunccorrectorerr}
Let $\Phi\in H^1(D)$ satisfy \eqref{eq:stabPhicontr}.
There exist $0<\beta<1$ and $C_{\mathrm{loc}}>0$, independent of $H$, $\ell$, and $k$, such that for any $v_H\in V_H$
\begin{equation*}
\|(\calC_\Phi-\calC_\Phi^\ell)v_H\|_{1,k}\leq C_{\mathrm{loc}}\ell^{d/2}\beta^\ell\|v_H\|_{1,k}.
\end{equation*}
\end{lemma}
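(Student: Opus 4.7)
The plan is to follow the by-now standard LOD localization argument, adapted to the linearized Kerr-Helmholtz operator $\calBl(\Phi;\cdot,\cdot)$. The key ingredients that make this argument "carry over from the linear case" are: (i) the coercivity of $\calBl(\Phi;\cdot,\cdot)$ on $\W$ from Lemma~\ref{l:infsupW}, which holds uniformly in $\Phi$ under assumption~\eqref{eq:stabPhicontr}, (ii) the locality and stability properties \eqref{eq:IH1loc}--\eqref{eq:IH2loc} of $\IH$, and (iii) the Galerkin-type defining equations \eqref{eq:corrPhi} and \eqref{eq:elementcorrector}.

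First I would prove exponential decay of the ideal element corrector $\varphi_T := \calC_{\Phi,T} v_H$ away from $T$. Fix a cut-off function $\eta_\ell \in W^{1,\infty}(D)$ with $\eta_\ell \equiv 0$ on $\Nb^{\ell-1}(T)$, $\eta_\ell \equiv 1$ outside $\Nb^\ell(T)$, and $\|\nabla \eta_\ell\|_\infty \lesssim H^{-1}$. Since $w := (1-\IH)(\eta_\ell \varphi_T)\in \W$ has support disjoint from $T$, the right-hand side of the element version of \eqref{eq:corrPhi} tested with $w$ vanishes. Using the coercivity \eqref{eq:infsupW}, the product rule for $\nabla(\eta_\ell\varphi_T)$, and the local stability \eqref{eq:IH1loc}--\eqref{eq:IH2loc} of $\IH$, one obtains a Caccioppoli-type estimate of the form
\begin{equation*}
\|\varphi_T\|_{1,k,D\setminus \Nb^\ell(T)}^2 \;\leq\; C_1 \,\|\varphi_T\|_{1,k,\Nb^{\ell+1}(T)\setminus \Nb^{\ell-2}(T)}^2
\end{equation*}
with $C_1$ independent of $\ell$, $H$, and $k$ (the nonlinear contribution $(k^2 n\eps|\Phi|^2 w,\varphi_T)$ is absorbed by the same Nirenberg-type bound used in \eqref{eq:proof2}, since $\Phi$ satisfies~\eqref{eq:stabPhicontr}). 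Rewriting the outer annulus as the difference of two concentric patches and iterating this contraction in $\ell$ yields
\begin{equation*}
\|\varphi_T\|_{1,k,D\setminus \Nb^\ell(T)} \;\leq\; C_2\,\beta^\ell\,\|\varphi_T\|_{1,k} \;\leq\; C_3\,\beta^\ell\,\|v_H\|_{1,k,T}
\end{equation*}
for some $\beta = \sqrt{C_1/(1+C_1)}\in(0,1)$, where the last step uses the global stability of the ideal corrector.

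Second I would transfer this into an error estimate for the localized corrector. The difference $\delta_T := (\calC_{\Phi,T} - \calC_{\Phi,T}^\ell)v_H$ satisfies a Galerkin orthogonality with respect to $\W(\Nb^\ell(T))$, so by Céa's lemma (with constants from Lemma~\ref{l:infsupW} and Proposition~\ref{prop:auxlinpb}) and choosing the quasi-best approximation $(1-\IH)\big((1-\tilde\eta_\ell)\varphi_T\big)\in \W(\Nb^\ell(T))$, where $\tilde\eta_\ell$ is a cut-off concentrated on $\Nb^{\ell-1}(T)$ with value $1$ outside, one gets
\begin{equation*}
\|\delta_T\|_{1,k} \;\lesssim\; \|\varphi_T\|_{1,k,D\setminus \Nb^{\ell-2}(T)} \;\lesssim\; \beta^\ell\,\|v_H\|_{1,k,T}.
\end{equation*}

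Finally, summing over $T\in\calT_H$ and using that each point of $D$ is contained in at most $O(\ell^d)$ patches $\Nb^\ell(T)$ produces the extra factor $\ell^{d/2}$ via Cauchy-Schwarz:
\begin{equation*}
\|(\calC_\Phi - \calC_\Phi^\ell)v_H\|_{1,k}^2 \;\lesssim\; \ell^d \sum_{T\in\calT_H} \beta^{2\ell}\,\|v_H\|_{1,k,T}^2 \;\lesssim\; \ell^d \beta^{2\ell}\,\|v_H\|_{1,k}^2.
\end{equation*}
The main obstacle, and the reason one has to be careful rather than simply quoting \cite{GalP15,BroGP17}, is controlling the nonlinear coupling term $(k^2 n\eps|\Phi|^2 \cdot,\cdot)$ inside the Caccioppoli step: this is a nonlocal perturbation at the $L^6$-scale rather than the $L^2$-scale, and one must verify that the Nirenberg inequality (Lemma~\ref{l:Nir}) together with the smallness assumption \eqref{eq:stabPhicontr} on $\Phi$ absorbs the contribution into the coercivity constant $\gamma$ with a factor that is uniform in $\ell$ and $H$, so that neither the decay rate $\beta$ nor the constant $C_{\mathrm{loc}}$ inherit any dependence on $\Phi$, $k$, or the patch size.
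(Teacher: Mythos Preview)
Your proposal is correct and follows exactly the route the paper intends: the paper does not give a proof at all but simply asserts that the result ``directly carries over from the linear case \cite{GalP15,BroGP17}'', and what you have written is precisely the standard LOD localization argument from those references (Caccioppoli-type decay of the ideal element corrector, C\'ea-type transfer to the truncated corrector, finite-overlap summation), with the single adaptation---absorbing the Kerr term via Lemma~\ref{l:Nir} and~\eqref{eq:stabPhicontr} as in the proof of Lemma~\ref{l:infsupW}---correctly identified. Your closing remark on the $L^6$-scale perturbation is apt; the point that makes it harmless is that the coercivity constant $\gamma$ in Lemma~\ref{l:infsupW} is already uniform in $\Phi$, $k$, and $H$, so the entire iteration inherits constants depending only on $c_A$, $C_n$, $C_\mathrm{int}$, and $C_\mathrm{res}$.
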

In the localized variant of the iterative multiscale approximation from Section~\ref{ss:iterativemultiscale}, $\calC_{\scalebox{.65}{$\uH^m$}}$ is replaced by $\calC_{\scalebox{.65}{$\uH^m$}}^\ell$.
As discussed, in this approach all localized element correctors $\calC_{\scalebox{.65}{$\uH^{m}$}, T}^\ell$ are newly computed in each step even if $\uH^m$ has only slightly changed on the patch $\Nb^\ell(T)$.
To introduce our adaptive approach, we need to estimate the error between element correction operators $\calC_{\Phi, T}^\ell-\calC_{\Psi, T}^\ell$ for two different functions $\Phi, \Psi\in H^1(D)$.
In this setting, one should consider $\calC_{\Psi, T}^\ell$ as being \emph{available}, i.e., it has already been computed for instance in a previous iteration step, whereas $\calC_{\Phi, T}^\ell$ is not available (only the function $\Phi$). 
In our implementation, we use the following error indicator
\begin{equation}\label{eq:errIndT}
E(\calC_{\Phi, T}^\ell, \calC_{\Psi, T}^\ell)^2:=\sum_{K\in \Nb^\ell(T)}\|n\varepsilon(|\Phi|^2-|\Psi|^2)\|_{L^\infty(K)}^2\max_{v\vert_T\,:\, v\in V_H}\frac{\|\chi_T v-\calC_{\Psi, T}^\ell v\|_{0, K}^2}{\|v\|_{0,T}^2},
\end{equation}
where $\chi_T$ denotes the indicator function for the element $T$. 
Note that the error indicator avoids the computation of $\calC_{\Phi, T}^\ell$.
We now have the following result.
\begin{lemma}\label{l:erroindic}
Let $\Phi, \Psi\in H^1(D)$ satisfy \eqref{eq:stabPhicontr}.
Then, for any $\vH\in \VH$ it holds
\begin{equation}\label{eq:indicloc}
\|(\calC_{\Phi, T}^\ell-\calC_{\Psi, T}^\ell)v_H\|_{1,k}\leq \gamma^{-1}E(\calC_{\Phi, T}^\ell, \calC_{\Psi, T}^\ell)\, \|v_H\|_{1,k,T}
\end{equation}
and there exists $C_{\mathrm{ol}}>0$, independent of $H$, $\ell$, and $k$, such that for any $\vH\in \VH$ it holds
\begin{equation}\label{eq:indicglobal}
\|(\calC_{\Phi}^\ell-\calC_{\Psi}^\ell)v_H\|_{1,k}\leq C_{\mathrm{ol}} \ell^{d/2}\gamma^{-1}\big(\max_{T\in \calT_H}E(\calC_{\Phi, T}^\ell, \calC_{\Psi, T}^\ell)\big)\,\|v_H\|_{1,k}.
\end{equation}
\end{lemma}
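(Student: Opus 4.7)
The plan is to first prove the local estimate \eqref{eq:indicloc} by deriving an equation for the difference $e_T := (\calC_{\Phi,T}^\ell-\calC_{\Psi,T}^\ell)v_H \in \W(\Nb^\ell(T))$, and then derive the global estimate \eqref{eq:indicglobal} from the local one via a finite-overlap argument.

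For the local estimate, I would subtract the defining equations \eqref{eq:elementcorrector} for $\Phi$ and $\Psi$ while exploiting that the only difference between $\calBl(\Phi;\cdot,\cdot)$ and $\calBl(\Psi;\cdot,\cdot)$ is the Kerr term, namely $\calBl(\Phi;u,v)-\calBl(\Psi;u,v) = -(k^2 n\,\eps(|\Phi|^2-|\Psi|^2)u,v)$. After adding and subtracting $\calB_{\mathrm{lin},T}(\Psi;v_H,w)$ and $\calB_{\mathrm{lin},\Nb^\ell(T)}(\Psi;\calC_{\Psi,T}^\ell v_H,w)$ and using the defining equation for $\calC_{\Psi,T}^\ell v_H$, one obtains
\begin{equation*}
\calB_{\mathrm{lin},\Nb^\ell(T)}(\Phi; e_T, w) \,=\, \bigl(k^2 n\,\eps(|\Phi|^2-|\Psi|^2)\,(\calC_{\Psi,T}^\ell v_H - \chi_T v_H), w\bigr)_{\Nb^\ell(T)}
\end{equation*}
for all $w\in\W(\Nb^\ell(T))$. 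Testing with $w=e_T$ and invoking the (localized version of the) coercivity \eqref{eq:infsupW} from Lemma~\ref{l:infsupW}, one gets $\gamma\|e_T\|_{1,k,\Nb^\ell(T)}^2$ on the left. On the right, I would split the integral over the elements $K\in\Nb^\ell(T)$, pull $n\eps(|\Phi|^2-|\Psi|^2)$ out in $L^\infty(K)$, apply the indicator-bound $\|\chi_T v_H-\calC_{\Psi,T}^\ell v_H\|_{0,K}\leq \beta_K\|v_H\|_{0,T}$ (with $\beta_K$ the ratio appearing in~\eqref{eq:errIndT}), and then use Cauchy--Schwarz in the sum over $K$ to isolate the quantity $E(\calC_{\Phi,T}^\ell,\calC_{\Psi,T}^\ell)\cdot\|e_T\|_{0,\Nb^\ell(T)}\cdot\|v_H\|_{0,T}$. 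The remaining factor $k^2$ is distributed symmetrically via $k^2\|v_H\|_{0,T}\|e_T\|_{0,\Nb^\ell(T)}\leq \|v_H\|_{1,k,T}\|e_T\|_{1,k,\Nb^\ell(T)}$, and dividing out $\|e_T\|_{1,k,\Nb^\ell(T)}$ yields \eqref{eq:indicloc} (noting that $e_T$ is supported in $\Nb^\ell(T)$, so the local and global $1,k$-norms coincide).

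For the global estimate, I would write $(\calC_\Phi^\ell-\calC_\Psi^\ell)v_H = \sum_{T\in\calT_H} e_T$ and use the standard finite-overlap property of $\ell$-layer patches: each point of $D$ lies in at most $C\ell^d$ of the sets $\Nb^\ell(T)$ (where $C$ depends only on shape regularity). This gives $\|\sum_T e_T\|_{1,k}^2 \leq C_{\mathrm{ol}}^2\,\ell^d\sum_T\|e_T\|_{1,k}^2$. Summing the squared local bounds, pulling out the maximum $\max_T E(\calC_{\Phi,T}^\ell,\calC_{\Psi,T}^\ell)^2$, and using that the elements $T$ partition $D$ so that $\sum_T\|v_H\|_{1,k,T}^2=\|v_H\|_{1,k}^2$, yields~\eqref{eq:indicglobal}.

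The main technical obstacle is ensuring that the coercivity of Lemma~\ref{l:infsupW} carries over to the localized bilinear form $\calB_{\mathrm{lin},\Nb^\ell(T)}(\Phi;\cdot,\cdot)$ on the localized kernel space $\W(\Nb^\ell(T))$; this is not entirely automatic because the proof of Lemma~\ref{l:infsupW} was phrased globally. However, inspecting that proof shows that all ingredients (the Nirenberg inequality of Lemma~\ref{l:Nir} and the local bounds for $\IH$) hold subdomain-wise, and the global quantity $\|\Phi\|_{1,k}$ dominates any local analogue, so the argument is valid. A secondary bookkeeping point is the careful distribution of the $k^2$ weight so that no extra factor of $k$ appears in either \eqref{eq:indicloc} or \eqref{eq:indicglobal}; this is handled by the symmetric splitting described above.
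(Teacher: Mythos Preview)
Your proof is correct and follows essentially the same route as the paper's. One minor simplification: since $e_T\in\W(\Nb^\ell(T))\subset\W$ and $e_T$ is supported in $\Nb^\ell(T)$, the global coercivity of Lemma~\ref{l:infsupW} applies directly and $\Re\,\calBl(\Phi;e_T,e_T)=\Re\,\calB_{\mathrm{lin},\Nb^\ell(T)}(\Phi;e_T,e_T)$, so no separate localized version needs to be verified.
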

The proof of this lemma and a detailed discussion of the error indicator are postponed to the appendix in order to ease the reading.
We emphasize that the main result in Theorem~\ref{t:errAdItLOD} does not depend on the exact form of the error indicator, but only needs the estimates of Lemma~\ref{l:erroindic}.

Generalizing the setting of the previous section, we now define a sequence of adaptive multiscale solutions $\{\uHT^m\}_{m \in \N}$ as outlined in Algorithm~\ref{alg:adaptive} below.\\

\begin{breakablealgorithm}
	\caption{Adaptive iterative multiscale approximation}
	\label{alg:adaptive}
	\begin{flushleft}
		\textbf{input:} tolerance $\texttt{tol}$, starting value $\uHT^0\in V_H$, mesh $\calT_H$, oversampling parameter $\ell$ 
	\end{flushleft}
	\begin{algorithmic}[1] % The number tells where the line numbering should start
		\For {$m=1,2,\ldots$} 
		\ForAll {$T \in \calT_H$}
		\If {$m = 1$}	
			\State $E_{T}^m \gets \infty$
		\Else 
			\State $E_{T}^m \gets E(\calC_{\scalebox{.65}{$\uHT^{m-1}$}, T}^\ell, \tilde{\calC}_{m-2, T}^\ell)$ \Comment{does \textbf{not} explicitly require $\calC_{\scalebox{.65}{$\uHT^{m-1}$}, T}^\ell$}
		\EndIf
		\EndFor
		\State $\mathcal{M}_m \gets \{T\in\calT_H\,:\,E_T^m>\texttt{tol}\}$ \Comment{elements for which the corrector is updated}
		\ForAll {$T \in \calT_H$}
			\If {$T \in \mathcal{M}_m$}
				\State compute $\calC_{\scalebox{.65}{$\uHT^{m-1}$},T}^\ell$ and $\calC_{\scalebox{.65}{$\uHT^{m-1}$},T}^{\ell,*}$
				\State  $\tilde{\calC}_{m-1, T}^\ell \gets \calC_{\scalebox{.65}{$\uHT^{m-1}$}, T}^\ell$ \Comment{update corrector}
			\Else 
				 \State $\tilde{\calC}_{m-1, T}^\ell \gets \calC_{m-2, T}^\ell$ \Comment{reuse the old corrector}
			\EndIf
		\State $\tilde{\calC}_{m-1}^\ell \gets \sum_{T\in \calT_H}\tilde{\calC}_{m-1, T}^\ell$
		\State $\tilde{\calC}_{m-1}^{\ell,*} \gets \sum_{T\in \calT_H}\tilde{\calC}_{m-1, T}^{\ell,*}$
		\EndFor
		
		\State compute $\uHT^{m}\in (1-\tilde{\calC}_{m-1}^\ell)V_H$ as the solution to
		\begin{equation}\label{eq:itmssol} 
		\calBl(\uHT^{m-1}; \uHT^{m}, v_H)=(f, \overline{v_H})
		\end{equation}
		\quad\, for all $v_H\in (1-\tilde{\calC}_{m-1}^{\ell, *})V_H$
		
		\EndFor
	\end{algorithmic}
	\begin{flushleft}
		\textbf{output:} sequence $\{\uHT^m\}_{m\in \N}$
	\end{flushleft}	
\end{breakablealgorithm} \phantom{.}\\[1.5ex]
Let us shortly explain the adaptive algorithm. For $m=1$, all correctors are computed based on the starting value $\uHT^0$ and the LOD solution is computed as described in the previous section. In particular, if we choose the same starting value, we have $\uHT^1=\uH^1$ (up to the localization of the correctors).
In the subsequent iterations, we decide, for each element $T$, whether to compute a new corrector based upon our error indicator $E$. Otherwise, the available corrector from the previous iteration(s) is used.
Then, we assemble the LOD stiffness matrix using the mixture of newly computed and reused correctors and the previous LOD iterate $\uHT^{m-1}$ for the nonlinearity. 
We emphasize that we do not reuse contributions to the stiffness matrix as suggested for lagging or perturbed linear diffusion problems in \cite{HelM19,HelKM20}.
In practice, the loop of the above algorithm will of course be terminated using the residual 
$\texttt{res}^m$ in the $m$th step
as stopping criterion (together with a second tolerance \texttt{TOL} as input). 
Let us also emphasize that the loop over all elements (lines $10$ to $19$) allows for parallel computations. 
Finally, we note that in the extreme cases $\texttt{tol}=0$ or $\texttt{tol}=\infty$, we obtain the algorithm from Section \ref{ss:iterativemultiscale} or a fixed-point iteration with fixed multiscale space $(1-\calC_{\scalebox{.65}{$\uHT^0$}}^\ell)V_H$, respectively.
\begin{remark}[Practical choice of \texttt{tol}]\label{rem:tol}
While Theorem~\ref{t:errAdItLOD} below indicates how to choose \texttt{tol} to obtain a certain (guaranteed) accuracy, such a choice may be pessimistic and lead to an overly large number of corrector updates.
In practice, we suggest the following choice of the tolerance that is also used in our numerical experiments: fix a tolerance factor $\zeta_{\mathrm{tol}}\in [0,1]$ and set the tolerance in the $m$th iteration step to \[\texttt{tol}:=\zeta_{\mathrm{tol}}\, \bigl(\max_{T\in \calT_H} E_T^m\bigr).\]
With this strategy, \texttt{tol} changes in every iteration, but the a priori estimate of Theorem~\ref{t:errAdItLOD} below can still be applied by considering the maximal tolerance over all iterations.
Typically, the error indicators and thus the practically chosen tolerance get smaller over the iterations in our experiments.
\end{remark}
The well-posedness of~\eqref{eq:itmssol} does not follow from Lemma~\ref{l:stabapproxLOD} because not all element correctors are newly computed.
However, the well-posedness of the algorithm as well as the a priori estimate for the error $\uHT^m-u^m$  can be shown in a similar fashion as in Lemma~\ref{l:stabapproxLOD} and Theorem~\ref{t:errItLOD}.
The additional errors by the localization and the adaptive update of the correctors are only small perturbations for sufficiently large $\ell$ and sufficiently small $\texttt{tol}$,  cf.~Lemma~\ref{l:trunccorrectorerr} and Lemma~\ref{l:erroindic}.
More precisely, we have the following analog of Theorem~\ref{t:errItLOD}.
The proof with all technical details is again postponed to the appendix.
\begin{theorem}\label{t:errAdItLOD}
Assume that $\ell$ and $\mathtt{tol}$ satisfy
\begin{equation}\label{eq:oversampl}
\ell\gtrsim\Big|\log\Bigl(\frac{\delta(k)\gamma^2}{36\,C_\calB^3C_{\mathrm{int}}^3C_{\mathrm{loc}}}\Bigr)\Big| / |\log(\beta)| \quad \text{and}\quad \ell\gtrsim\Big|\log\Bigl(\frac{C_\calB}{\gamma C_{\mathrm{loc}}}\Bigr)\Big| / |\log(\beta)|
\end{equation}
as well as
\begin{equation}\label{eq:condtol}
\mathtt{tol}\leq \frac{\delta(k)\gamma^3}{36\,C_{\mathrm{int}}^3C_\calB^3C_{\mathrm{ol}}\ell^{d/2}}\quad \text{and}\quad  \mathtt{tol}\leq \frac{C_\calB}{C_{\mathrm{ol}}\ell^{d/2}}. 
\end{equation}
Define $\hat{C}_\mathrm{stab}(k):=\max\{C_\mathrm{stab}, \frac{18\,C_\mathrm{int}^2 C_\calB^2}{k\delta(k)\gamma^2}\}$ with the constant $C_\mathrm{stab}(k)$ from Proposition~\ref{prop:auxlinpb}.
Further, suppose that
\begin{equation}\label{eq:boundRHSadap}
2\hat{C}_\mathrm{stab}^3(k)C_nC_\varepsilon C_\mathrm{Nir}^3 k^{d-1}\|f\|_0^2\leq \vartheta.
\end{equation}
Then the sequence $\{\uHT^m\}_{m \in \N}$ of adaptive iterative multiscale approximations is well-defined and satisfies the error estimate
\begin{equation}\label{eq:erroradap}
\begin{aligned}
\|\uHT^m-u^m\|_{1,k} &\leq \vartheta^m\|\uHT^0-u^0\|_{1,k}\\
&\quad+\frac{2}{1-\vartheta}\gamma^{-1}C_\mathrm{int}\big(H+C_\calB C_{\mathrm{loc}}\,\ell^{d/2}\beta^\ell+C_\calB C_\mathrm{stab}(k)C_{\mathrm{ol}}\,\ell^{d/2}\,\gamma^{-1}\,\mathtt{tol} \big)\|f\|_0.
\end{aligned}
\end{equation}
\end{theorem}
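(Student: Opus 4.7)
The plan is to mirror the proof of Theorem~\ref{t:errItLOD}, using an auxiliary unrestricted linearization and a triangle inequality that separates the nonlinear iteration error from the spatial/multiscale discretization error, but with two additional perturbations to control: the localization of the correctors to $\ell$-layer patches, and the reuse of outdated element correctors on the complement of the marked set $\mathcal{M}_m$. Concretely, for fixed $m$ I would introduce $w^m\in H^1(D)$ solving $\calBl(\uHT^{m-1};w^m,v)=(f,v)$ for all $v\in H^1(D)$, so that the decomposition $\uHT^m-u^m=(\uHT^m-w^m)+(w^m-u^m)$ reduces the proof to two tasks. The second summand is bounded by $\vartheta\|\uHT^{m-1}-u^{m-1}\|_{1,k}$ exactly as in Theorem~\ref{t:errItLOD} using Proposition~\ref{prop:auxlinpb}, Lemma~\ref{l:Nir} and the stability bounds encoded in \eqref{eq:boundRHSadap}; for these bounds to apply to the adaptive iterates I first need uniform stability $\|\uHT^m\|_{1,k}\leq \hat{C}_\mathrm{stab}(k)\|f\|_0$, proved by induction using the discrete inf-sup established below.

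The core task is to show that \eqref{eq:itmssol} is well-posed and that $\|\uHT^m-w^m\|_{1,k}\lesssim (H+\ell^{d/2}\beta^\ell+\mathtt{tol})\|f\|_0$. For well-posedness I would establish a discrete inf-sup condition for $\calBl(\uHT^{m-1};\cdot,\cdot)$ on the mixed trial space $(1-\tilde{\calC}^\ell_{m-1})V_H$ and test space $(1-\tilde{\calC}^{\ell,*}_{m-1})V_H$. Starting from the inf-sup of Lemma~\ref{l:stabapproxLOD} (applied for $\Phi=\uHT^{m-1}$ and with the ideal global correctors $\calC_{\scalebox{.65}{$\uHT^{m-1}$}}$ and $\calC^*_{\scalebox{.65}{$\uHT^{m-1}$}}$), I treat $\tilde{\calC}^\ell_{m-1}-\calC_{\scalebox{.65}{$\uHT^{m-1}$}}$ as a perturbation. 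This difference splits elementwise: on elements of $\mathcal{M}_m$, it is the pure localization error controlled by Lemma~\ref{l:trunccorrectorerr}, i.e., at most $C_{\mathrm{loc}}\ell^{d/2}\beta^\ell$; on the other elements it is the sum of a localization error and an outdated-corrector contribution controlled elementwise by the indicator via Lemma~\ref{l:erroindic}, and therefore uniformly bounded by $C_{\mathrm{loc}}\ell^{d/2}\beta^\ell+C_{\mathrm{ol}}\ell^{d/2}\gamma^{-1}\mathtt{tol}$. Plugging in the quantitative conditions~\eqref{eq:oversampl} and~\eqref{eq:condtol} shows that this perturbation is at most half of the ideal inf-sup constant, so a standard Strang-type argument preserves a discrete inf-sup of size $\delta(k)\gamma/(2C_\calB)$, which yields well-posedness and the claimed stability bound $\hat{C}_\mathrm{stab}(k)$.

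For the spatial error bound $\|\uHT^m-w^m\|_{1,k}$ I would insert two intermediate objects: the ideal multiscale solution $w_H^m\in(1-\calC_{\scalebox{.65}{$\uHT^{m-1}$}})V_H$ solving~\eqref{eq:LODaux} with $\Phi=\uHT^{m-1}$, and its fully localized-updated counterpart $w_{H,\ell}^m\in(1-\calC^\ell_{\scalebox{.65}{$\uHT^{m-1}$}})V_H$. Lemma~\ref{l:stabapproxLOD} gives $\|w^m-w_H^m\|_{1,k}\leq C_\mathrm{err}H\|f\|_0$. For $\|w_H^m-w_{H,\ell}^m\|_{1,k}$, a second-Strang argument in the mixed trial/test LOD spaces — combined with the stability of $w_H^m$, the continuity $C_\calB$, and Lemma~\ref{l:trunccorrectorerr} — gives the exponentially small term in $\ell$. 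Finally, a third application of Strang's lemma, this time using Lemma~\ref{l:erroindic} for the difference $\tilde{\calC}^\ell_{m-1}-\calC^\ell_{\scalebox{.65}{$\uHT^{m-1}$}}$, gives the $\mathtt{tol}$-contribution. Combining these bounds and absorbing constants into the common factor $2\gamma^{-1}C_\mathrm{int}$ gives the per-step bound claimed.

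The main obstacle is the discrete inf-sup for the mixed trial/test spaces, because one perturbs \emph{both} the correction operator appearing inside the ansatz and the one appearing inside the test function, and these perturbations are neither Galerkin-orthogonal nor small in the naive operator norm without~\eqref{eq:oversampl}--\eqref{eq:condtol}; getting a sharp, uniform constant here drives the precise quantitative thresholds in the hypothesis. Once this is in place, the rest of the argument is a clean repetition of the iteration step from Theorem~\ref{t:errItLOD}: the recursion
\[
\|\uHT^m-u^m\|_{1,k}\leq \vartheta\|\uHT^{m-1}-u^{m-1}\|_{1,k}+2\gamma^{-1}C_\mathrm{int}\bigl(H+C_\calB C_{\mathrm{loc}}\ell^{d/2}\beta^\ell+C_\calB C_\mathrm{stab}(k)C_{\mathrm{ol}}\ell^{d/2}\gamma^{-1}\mathtt{tol}\bigr)\|f\|_0
\]
is resolved by a geometric series to yield~\eqref{eq:erroradap}.
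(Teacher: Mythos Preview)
Your overall strategy is correct and matches the paper's: introduce the auxiliary continuous solution $w^m$ (the paper calls it $\tilde w^m$), bound $\|w^m-u^m\|_{1,k}\le\vartheta\|\uHT^{m-1}-u^{m-1}\|_{1,k}$ exactly as in Theorem~\ref{t:errItLOD}, establish a discrete inf-sup for the perturbed Petrov--Galerkin spaces, and resolve the recursion by a geometric series.

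The technical execution differs in two places. First, for the discrete inf-sup the paper does not perturb the LOD inf-sup of Lemma~\ref{l:stabapproxLOD}; instead it goes back to the \emph{continuous} inf-sup of Lemma~\ref{l:infsup}, chooses a maximizing test function $w\in H^1(D)$, sets $w_H=\IH w$, and then splits $(1-\tilde{\calC}^\ell_{m-1})=(1-\calC_{\scalebox{.65}{$\uHT^{m-1}$}})+(\calC_{\scalebox{.65}{$\uHT^{m-1}$}}-\calC^\ell_{\scalebox{.65}{$\uHT^{m-1}$}})+(\calC^\ell_{\scalebox{.65}{$\uHT^{m-1}$}}-\tilde{\calC}^\ell_{m-1})$, exploiting the $\calBl$-orthogonality of $(1-\calC_{\scalebox{.65}{$\uHT^{m-1}$}})V_H$ against $\W$. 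This yields the inf-sup constant $\delta(k)\gamma^2/(18C_\mathrm{int}^2C_\calB^2)$, which is precisely what produces the stated $\hat C_\mathrm{stab}(k)$; your claimed constant $\delta(k)\gamma/(2C_\calB)$ would not. Second, for the spatial error the paper does \emph{not} telescope through the intermediate ideal and localized LOD solutions $w_H^m$, $w_{H,\ell}^m$; it works directly with $e=\tilde w^m-\uHT^m$, splits $e=(e-e_H^\ell)+e_H^\ell$ with $e_H^\ell=(1-\tilde{\calC}^\ell_{m-1})\IH e$, bounds the first piece via the coercivity on $\W$ from Lemma~\ref{l:infsupW}, and bounds the second via a discrete dual problem. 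Your telescoping route should also work, but it requires separately establishing an inf-sup for the fully localized (non-adaptive) LOD space and tracking constants through three Strang steps; the paper's direct coercivity-plus-duality argument is shorter and delivers the stated constants in one pass.
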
 

We emphasize that the first condition of \eqref{eq:oversampl} and \eqref{eq:condtol}, respectively, is the dominant condition.
Condition~\eqref{eq:oversampl} requires~$\ell$ to grow with increasing wave numbers~$k$.
Under the assumption of polynomial stability (cf.~the discussion after Assumption~\ref{a:stabLin0}), $\ell$ has to grow logarithmically in $k$ as for the linear Helmholtz equation, see \cite{GalP15,Pet17,PetV20}.
The tolerance~\texttt{tol} has to decrease with growing $k$, in particular under the assumption of polynomial stability as above, we have $\mathtt{tol}\lesssim k^{-(q+1)}$.
We emphasize that this qualitative behavior is expected because for growing wave numbers, the nonlinearity becomes more dominant which has to be compensated by more updates of the correctors.
The error between $\uHT^m$ and $u^m$ in the energy norm in \eqref{eq:erroradap} is essentially of order $H+\beta^\ell+\mathtt{tol}$. The order $H+\beta^\ell$ occurs also in the study of the linear Helmholtz equation \cite{GalP15,Pet17,PetV20} and suggests to choose $\ell\approx |\log(H)|$ to obtain a linear rate.
The additional term~\texttt{tol} obviously comes from the reuse of correctors and can be made arbitrarily small at the price of growing computational costs.

In other words, as mentioned, for $\mathtt{tol}=0$ we obtain the localized version of the algorithm in Section~\ref{ss:iterativemultiscale}.
All additional terms and factors in Theorem \ref{t:errAdItLOD} in comparison to Theorem \ref{t:errItLOD} are caused by the handling of the additional localization error.
We note that the factor $18$ in $\hat C_{\mathrm{stab}}(k)$ and the factor $3$ in \eqref{eq:erroradap} can be made smaller for $\mathtt{tol}=0$ by a close inspection of the proof.
In fact for zero tolerance, all steps where we switch between $\tilde{\calC}_{m-1}^\ell$ and $\calC_{\scalebox{.65}{$\uHT^{m-1}$}}^\ell$ can be omitted.

\section{Numerical Examples}\label{s:numex}
In this section, we present numerical examples to investigate the practical performance of our proposed iterative multiscale method. Note that in our studies the exact solution $u$ of the nonlinear problem \eqref{eq:NLHweak} is not explicitly known. Therefore, we compare the iterative multiscale solutions computed in \eqref{eq:itmssol} with a reference solution $u_h$, which is computed as a standard finite element approximation of \eqref{eq:NLHweak}. We implicitly assume that $h$ is small enough such that $u_h$ is a reasonable approximation of $u$. In particular, the mesh size $h$ needs to resolve the multiscale features in the coefficients $A$, $n$, and $\eps$. Further, we measure all errors in the energy norm $\|\cdot\|_{1,k}$.

We consider the domain $D = (0,1)^2$ and choose the fine mesh size $h = 2^{-9}$ and let the coefficients vary on a mesh $\calT_\eta$ on the scale $\eta = 2^{-7}$. All numerical examples are computed with Python using an adapted version of the software \texttt{gridlod}~\cite{HelK19}, which is based on the Petrov-Galerkin version of the LOD method described above. The code is available at \url{https://github.com/BarbaraV/gridlod-nonlinear-helmholtz}. Note that for the first two experiments, we use the update strategy as described in Remark~\ref{rem:tol} which sets the tolerance in each step. To investigate the dependence of the error on \texttt{tol}, we present a third example where we use various (fixed) tolerances as considered in Theorem~\ref{t:errAdItLOD}.

We emphasize that the smallness assumptions (cf.~ 
\eqref{eq:stabRHS2} and~\eqref{eq:boundRHS}) are not fulfilled in our examples -- provided that the stability constants are not significantly smaller than $1$ -- due to relatively large norms $\|f\|_0$. This, however, seems to indicate that the smallness assumptions are not sharp and the method is reliable beyond the theoretical setting. Nevertheless, a relaxed version of the condition still seems to hold as observed by numerical investigations. 

\subsection{Example 1: point source}

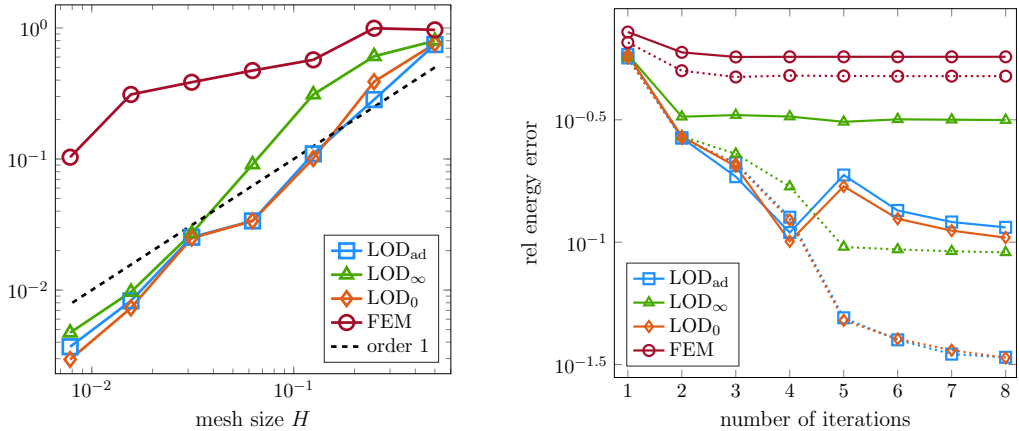
\begin{figure}
	\begin{center}
		\scalebox{0.69}{
			\begin{tikzpicture}
			
			\begin{axis}[%
			width=3.0in,
			height=2.8in,
			scale only axis,
			at={(0.758in,0.481in)},
			scale only axis,
			xmin=0.0066,
			xlabel={\large mesh size $H$},
			xmax=0.6,
			xminorticks=true,
			ymode=log,
			xmode=log,
			ymin=2.3e-03,
			ymax=1.5,
			yminorticks=true,
			axis background/.style={fill=white},
			legend style={legend cell align=left, align=left,
				draw=white!15!black},
			legend pos = south east,
			]
				
			\addplot [color=myBlue, mark=square, line width=1.5pt, mark size=4.0pt]
			table[row sep=crcr]{%
				0.5	0.746799302418362\\
				0.25	0.28410593713032\\
				0.125	0.10949799076263\\
				0.0625	0.0337103064511593\\
				0.03125	0.0252868588257529\\
				0.015625	0.00827572047991259\\
				0.0078125	0.00369509499152635\\
			};
			\addlegendentry{$\mathrm{LOD}_\mathrm{ad}$}
			
			\addplot [color=myGreen, mark=triangle, line width=1.5pt, mark size=4.0pt]
			table[row sep=crcr]{%
				0.5	0.79791022771592\\
				0.25	0.607784694636488\\
				0.125	0.310414076142366\\
				0.0625	0.0908959396386922\\
				0.03125	0.027063418241726\\
				0.015625	0.00967306087371738\\
				0.0078125	0.00472897666793636\\
			};
			\addlegendentry{$\mathrm{LOD}_\infty$}
			
			\addplot [color=myOrange, mark=diamond, line width=1.5pt, mark size=4.0pt]
			table[row sep=crcr]{%
				0.5	0.757654815562343\\
				0.25	0.389238415642447\\
				0.125	0.100770383091331\\
				0.0625	0.0336975627146933\\
				0.03125	0.0249281996361832\\
				0.015625	0.00730187628160989\\
				0.0078125	0.00295418680649401\\
			};
			\addlegendentry{$\mathrm{LOD}_0$}
			
			\addplot [color=myRed, mark=o, line width=1.5pt, mark size=4.0pt]
			table[row sep=crcr]{%
				0.5	0.967389512339179\\
				0.25	0.995268972849217\\
				0.125	0.571324763621321\\
				0.0625	0.473139658805676\\
				0.03125	0.385334598336857\\
				0.015625	0.31082177443248\\
				0.0078125	0.103207625204378\\
			};
			\addlegendentry{FEM}
		
			\addplot [color=black, dashed,line width=1.5pt]
			table[row sep=crcr]{%
				0.5		0.5\\
				0.0078125	0.0078125\\
			};
			\addlegendentry{order $1$}

			\end{axis}
			\end{tikzpicture}%
		}
		\hspace{.5cm}
		\scalebox{0.69}{
			\begin{tikzpicture}
			
			\begin{axis}[%
			width=3.05in,
			height=2.8in,
			scale only axis,
			at={(0.772in,0.481in)},
			scale only axis,
			xmin=0.75,
			xlabel={\large number of iterations},
			xmax=8.2,
			ymode=log,
			ymin=2.8e-2,
			ymax=0.9,
			ylabel={\large rel energy error},
			yminorticks=true,
			axis background/.style={fill=white},
			legend style={legend cell align=left, align=left, draw=white!15!black},
			legend pos = south west
			]
				
			\addplot [color=myBlue, mark=square, line width=1.2pt, mark size=3.0pt]
			table[row sep=crcr]{%
				1	0.583985712618789\\
				2	0.266778879066947\\
				3	0.185250943066988\\
				4	0.10949799076263\\
				5	0.188140498991427\\
				6	0.134758722946591\\
				7	0.120935204188977\\
				8	0.114964634364349\\
			};
			\addlegendentry{$\mathrm{LOD}_\mathrm{ad}$}%, 
			
			\addplot [color=myBlue, mark=square, dotted, line width=1.2pt, mark size=3.0pt, mark options={solid}, forget plot]
			table[row sep=crcr]{%
				1	0.566255485046909\\
				2	0.266640737247647\\
				3	0.210875832139256\\
				4	0.126506106161603\\
				5	0.0490863609855369\\
				6	0.0399154490486818\\
				7	0.03490026710594\\
				8	0.0338610477939377\\
			};
			
			\addplot [color=myGreen, mark=triangle, line width=1.2pt, mark size=3.0pt, mark options={solid}]
			table[row sep=crcr]{%
				1	0.583985712618789\\
				2	0.325652761123258\\
				3	0.330788981359923\\
				4	0.326180408721868\\
				5	0.310414076142366\\
				6	0.317679890851589\\
				7	0.316637419859261\\
				8	0.315573581846614\\
			};
			\addlegendentry{$\mathrm{LOD}_\infty$}%, $H = 2^{-3}$}
			
			\addplot [color=myGreen, mark=triangle, dotted, line width=1.2pt, mark size=3.0pt, mark options={solid}, forget plot]
			table[row sep=crcr]{%
				1	0.566255485046909\\
				2	0.270691803704024\\
				3	0.229595730109755\\
				4	0.168728737149417\\
				5	0.0956356845373586\\
				6	0.0933747942566275\\
				7	0.091838433424397\\
				8	0.0908959396386922\\
			};
			
			\addplot [color=myOrange, mark=diamond, line width=1.2pt, mark size=3.0pt, mark options={solid}]
			table[row sep=crcr]{%
				1	0.583985712618789\\
				2	0.27049943758259\\
				3	0.204205518388288\\
				4	0.100770383091331\\
				5	0.169117816340225\\
				6	0.12465385976353\\
				7	0.11148297000437\\
				8	0.104424416867215\\
			};
			\addlegendentry{$\mathrm{LOD}_0$}%, $H = 2^{-3}$}
			
			\addplot [color=myOrange, mark=diamond, dotted, line width=1.2pt, mark size=3.0pt, mark options={solid}, forget plot]
			table[row sep=crcr]{%
				1	0.566255485046909\\
				2	0.265388691840433\\
				3	0.209234990895319\\
				4	0.12338603420678\\
				5	0.047949603482452\\
				6	0.0402486624938315\\
				7	0.0362191467231747\\
				8	0.0336975627146933\\
			};
			
			\addplot [color=myRed, mark=o, line width=1.2pt, mark size=3.0pt]
			table[row sep=crcr]{%
				1	0.722517464841298\\
				2	0.596699969453035\\
				3	0.571324763621321\\
				4	0.57217600676365\\
				5	0.57230242759741\\
				6	0.572262528306353\\
				7	0.572262143101785\\
				8	0.572262628765595\\
			};
			\addlegendentry{FEM}
			
			\addplot [color=myRed, mark=o, dotted, line width=1.2pt, mark size=3.0pt, mark options={solid},forget plot]
			table[row sep=crcr]{%
				1	0.65521816686242\\
				2	0.5019247651342\\
				3	0.473139658805676\\
				4	0.479671456344483\\
				5	0.477709164980791\\
				6	0.476757234327912\\
				7	0.477007831119992\\
				8	0.477015981743695\\
			};
				
			\end{axis}
			\end{tikzpicture}%
		}
	\end{center}
	\caption[]
	{\small Relative energy errors of different iterative methods (left) and errors with respect to the number of iterations (right) for Example~$1$. {The solid and dotted lines in the right plot are obtained for $H = 2^{-3}$ and $H=2^{-4}$, respectively.}} \label{fig:exp1Err}
\end{figure}

For the first example, we consider the right-hand side 
\begin{equation*}
f(x) = \left\{
\begin{aligned}
&10000\,\exp\bigg(-\frac{1}{1-\big(\tfrac{|x-x_0|}{0.05}\big)}\bigg) && \text{ if }\tfrac{|x-x_0|}{0.05} < 1,\\
&0 && \text{ else,}
\end{aligned}\right.
\end{equation*}
with center $x_0 = (0.5,0.5)$. 
Further, we have $k = 17$ and the domain where the nonlinearity is active is given by $[0.55,0.75]\times[0.25,0.45]$. 
The scalar coefficients $A$, $n$, and $\eps$ are chosen as piecewise constant coefficients on the finite element mesh $\calT_\eta$. More precisely, the values on each element are obtained as independent and uniformly distributed random numbers within the intervals $[0.5,3]$, $[0.5,1]$, and $[0,9.4]$ for $A$, $n$, and~$\eps$, respectively. Note, however, that the values of $A$ and $n$ in $D\setminus[0.15,0.85]^2$ are explicitly set to $1$ in order to satisfy the last condition of Assumption~\ref{a:coeff}.

We compute the reference solution on the mesh $\calT_h$, as mentioned above, which is obtained by an iterative solution similarly as described in~\eqref{eq:NLHweakIt}. Note that this discrete iteration is well-posed as pointed out in Remark~\ref{rem:fineFEM}. We iterate until the (relative) residual reaches the threshold of $10^{-12}$, which requires $45$ iterations. 

In Figure~\ref{fig:exp1Err}~(left), we show the errors of adaptively updated LOD solutions with $\ell = 2$ for different values of $H$ (LOD$_{\mathrm{ad}}$, {\protect \tikz{ \draw[line width=1.pt, myBlue] (0,0) rectangle (0.18,0.18);}}). More precisely, for any $H$, we show the smallest error that is obtained within the first $20$ steps of the iterative multiscale method. For this particular example, we use the update strategy described in Remark~\ref{rem:tol} with $\zeta_\mathrm{tol} = 0.5$. 
That is, in each step we only update element correctors whose error indicator (as defined in~\eqref{eq:errIndT}) is larger than half the value of the maximal error indicator in this iteration. Note that, in the first iteration, all element correctors have to be computed. Afterwards, the maximal update percentages within the first $20$ iterations for $H = 2^{-1},\,2^{-2},\,2^{-3},\,2^{-4},\,2^{-5},\,2^{-6},\,2^{-7}$ are 
$50,\,6.25,\,6.25,\,3.52,\,1.56,\,0.88,\,0.37$ 
(in \%), respectively. We emphasize that these percentages are with respect to the total number of elements in the corresponding mesh, which of course increases when $H$ is decreased.
In this example, the iterative method reaches a fixed point for all choices of $H$ within the first $15$ iterations. Further, the choice $\ell = 2$ shows to be sufficient, which is in line with the practical choices of $\ell$ in the linear case; see, e.g.~\cite{BroGP17,PetV20}. Overall, the error shows a linear convergence rate underlining the findings of Theorem~\ref{t:errAdItLOD}.

\begin{figure*}
	\centering
	\captionsetup[subfigure]{labelformat=empty}
	\begin{subfigure}[b]{0.495\textwidth}
		\centering 
		\includegraphics[width=\textwidth]{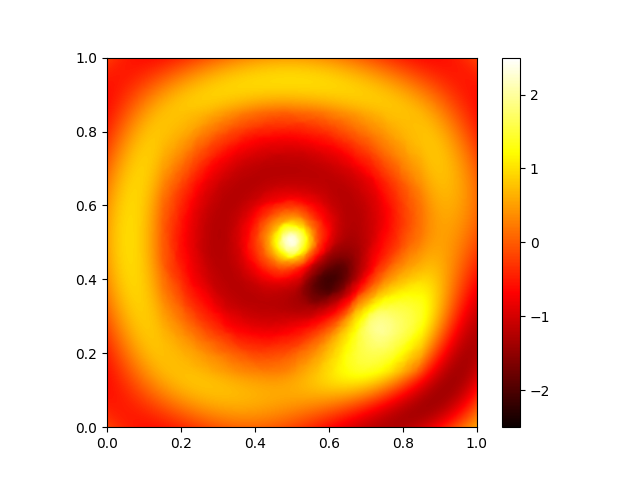}
		\caption[]%
		{\small reference solution}   
		\label{fig:exp1exact} 		
	\end{subfigure}
	%\qquad
	\begin{subfigure}[b]{0.495\textwidth}  
		\centering
		\includegraphics[width=\textwidth]{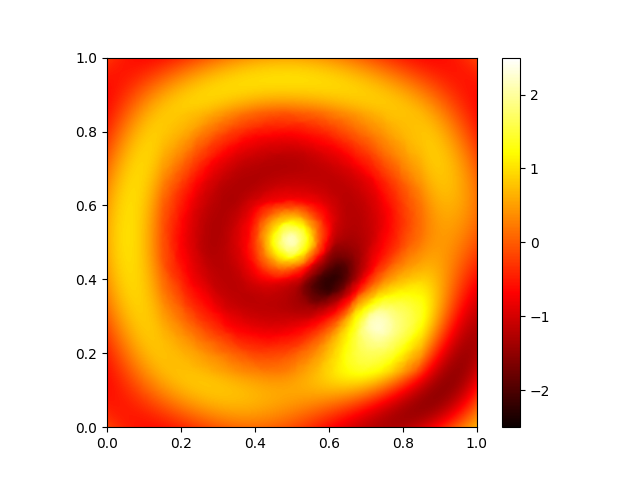}
		\caption
		{\small $\mathrm{LOD}_\mathrm{ad}$}
		\label{fig:exp1LOD}
	\end{subfigure}
	\vskip\baselineskip
	\begin{subfigure}[b]{0.495\textwidth}   
		\centering 
		\includegraphics[width=\textwidth]{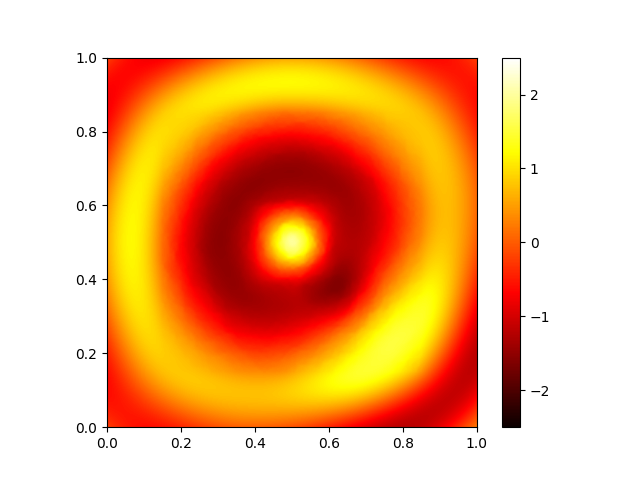}
		\caption[]%
		{\small $\mathrm{LOD}_\infty$}    
		\label{fig:exp1LODinf}
	\end{subfigure}
	%\qquad
	\begin{subfigure}[b]{0.495\textwidth}   
		\centering 
		\includegraphics[width=\textwidth]{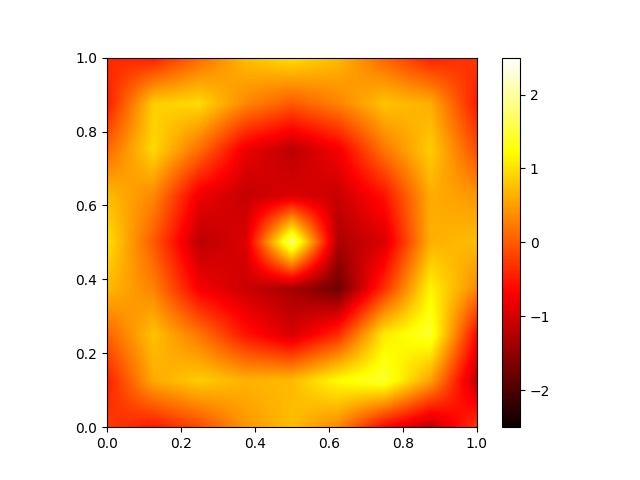}
		\caption[]%
		{\small FEM}    
		\label{fig:exp1FEM}
	\end{subfigure}
	\caption[]
	{\small Real part of different final approximations on the scale $H=2^{-3}$ and the corresponding reference solution for Example~$1$.} 
	\label{fig:exp1Approx}
\end{figure*}

As comparison, we also present in Figure~\ref{fig:exp1Err}~(left) classical finite element approximations on the same scales~$H$~(FEM, {\protect \tikz{ \draw[line width=1.pt, myRed] circle (0.6ex);}}) as well as iterative LOD solutions for which the correctors are only computed in the first iteration and never updated afterwards (LOD$_\infty$, {\protect \tikz{ \draw[line width=1.pt, myGreen] (0,0) --(0.18,0) -- (0.09,0.18) -- (0,0);}}) and LOD solutions where all correctors are always updated (LOD$_0$, {\protect \tikz{ \draw[line width=1.pt, myOrange] (0.07,0) --(0.14,0.1) -- (0.07,0.2) -- (0.,0.1) -- (0.07,0);}}).
The finite element approximation shows a suboptimal convergence rate and the error only improves when $H$ is close to the scale $\eta$, i.e., where the coefficients are actually resolved by the coarse mesh. Then again, the LOD approximation which never updates the correctors starts off with a slow convergence similarly to the finite element approximation but heavily improves for smaller values of $H$, where the error is close to the one where the correctors are {(partially)} updated in every step. This behavior can be explained by the fact that the nonlinearity is only active in a small portion of the domain. Further, if $H$ approaches $\eta$, the correctors become less influential. That is, for smaller values of $H$, very few updates of the correctors are already sufficient, which is also indicated by the small update percentages mentioned above. Finally, it is important to note that our partial update strategy leads to errors very close to the approximations where all corrections are updated in every step ($\mathtt{tol}=0$), but with significantly less recomputations.

The influence of the updates is especially important for larger values of $H$, see Figure~\ref{fig:exp1Err}~(right). There, we show the development of the relative errors of the different approximation methods for the particular cases $H=2^{-3}$ (solid lines) and $H = 2^{-4}$ (dashed lines). The plot shows that the error significantly improves with the number of iterations provided that the correctors are (partially) updated. If the correctors are not updated after the first iteration, a faster stagnation can be observed. We note that the errors do not necessarily decrease monotonically, but this does not contradict our above theory.

Finally, we present in Figure~\ref{fig:exp1Approx} the final solutions for three of the different approaches discussed above ($\mathrm{LOD}_\mathrm{ad}$, $\mathrm{LOD}_\infty$, and FEM) on the scale $H=2^{-3}$ after $20$ iterations as well as the reference solution. 
The figure shows that the LOD approximation with corrector updates leads to a very good approximation already on the relatively coarse scale $H=2^{-3}$, while not updating the correctors after the first step deteriorates the behavior of the solution in the nonlinear domain. As expected, the finite element approximation, which does not take into account any variations of the coefficients, produces the worst result.

\subsection{Example 2: beam} 
\begin{figure*}
	\centering
	\captionsetup[subfigure]{labelformat=empty}
	\begin{subfigure}[b]{0.495\textwidth}
		\centering
		\includegraphics[width=\textwidth]{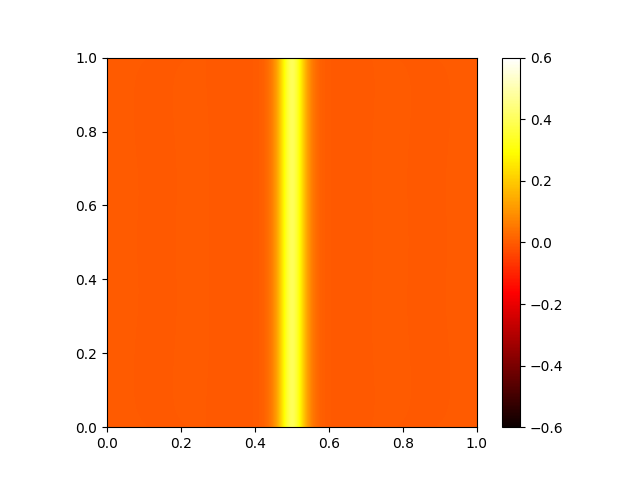}
	\end{subfigure}
	%\qquad
	\begin{subfigure}[b]{0.495\textwidth}  
		\centering 
		\includegraphics[width=\textwidth]{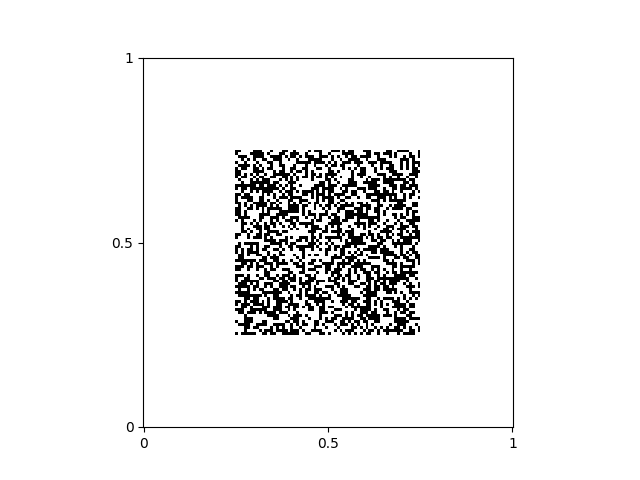}
	\end{subfigure}
	\caption[]
	{\small Real part of the incident beam $u_\mathrm{inc}$ (left) and Kerr coefficient with values $0$ (white) and $0.85$ (black) for Example~2.} 
	\label{fig:coeffBeam}
\end{figure*}
For the second example, we go beyond the above theory and consider an inhomogeneous boundary condition in~\eqref{eq:NLH}, which is equivalent to adding a boundary source $g$. In particular, we choose the wave number $k=30$ as well as the right-hand side $f$ and the boundary term $g$ that correspond to the incident beam
\begin{equation*}\label{eq:beam}
u_\mathrm{inc}(x) = \frac{0.8\exp(-\iu k (0.5 x_1 - 0.25)}{\cosh(50x_1 - 25)+1},
\end{equation*}
which is illustrated in Figure~\ref{fig:coeffBeam}~(left). 
More precisely, we take
\begin{equation*}
f = - \Delta u_\mathrm{inc} - k^2 u_\mathrm{inc}\quad\text{in }D, \qquad g = \nabla u_\mathrm{inc} \cdot \nu + \iu k u_\mathrm{inc}\quad\text{on }\Gamma.
\end{equation*}
As above, we choose $A$ to be piecewise constant on the mesh $\calT_\eta$, where values on each element are obtained as independent and uniformly distributed random numbers within the intervals $[0.2,1]$. Further, $A$ is set to $1$ in $D\setminus[0.25,0.75]^2$; cf.~Assumption~\ref{a:coeff}. The Kerr coefficient is chosen as depicted in Figure~\ref{fig:coeffBeam}~(right) with values $0$ (white) and $0.85$ (black), and $n \equiv 1$. Note that the nonlinearity is active in $[0.25,0.75]^2$ in this case. We again compute the corresponding reference solution iteratively on the mesh $\calT_h$. The threshold for the (relative) residual is again $10^{-12}$, which is reached after $20$ iterations. The reference solution is shown in Figure~\ref{fig:errSol}~(right) and shows scattering effects that appear due to the heterogeneity in $A$ and the nonlinear Kerr term. 

\begin{figure}
	\begin{center}
		\scalebox{0.69}{
			\begin{tikzpicture}
				
			\begin{axis}[%
			width=3.0in,
			height=2.8in,
			scale only axis,
			at={(0.758in,0.481in)},
			scale only axis,
			xmin=0.0066,
			xlabel={\large mesh size $H$},
			xmax=0.6,
			xminorticks=true,
			ymode=log,
			xmode=log,
			ymin=1.e-02,
			ymax=6.6,
			yminorticks=true,
			axis background/.style={fill=white},
			legend style={legend cell align=left, align=left,
				draw=white!15!black},
			legend pos = south east,
			]
			
			\addplot [color=myBlue, mark=square, line width=1.5pt, mark size=4.0pt]
			table[row sep=crcr]{%
				0.5	1.07812846722511\\
				0.25	1.21611281274903\\
				0.125	3.64712471309059\\
				0.0625	0.482304108752061\\
				0.03125	0.0892937572721887\\
				0.015625	0.0321656628915674\\
				0.0078125	0.0148363283948829\\
			};
			\addlegendentry{$\mathrm{LOD}_\mathrm{ad}$}
			
			\addplot [color=myGreen, mark=triangle, line width=1.5pt, mark size=4.0pt]
			table[row sep=crcr]{%
				0.5	1.0782571229209\\
				0.25	1.21183836206265\\
				0.125	3.11753634510907\\
				0.0625	0.486673966084616\\
				0.03125	0.0953446583204031\\
				0.015625	0.0342477583776324\\
				0.0078125	0.0154504352232345\\
			};
			\addlegendentry{$\mathrm{LOD}_\infty$}
			
			\addplot [color=myOrange, mark=diamond, line width=1.5pt, mark size=4.0pt]
			table[row sep=crcr]{%
				0.5	1.07812846722511\\
				0.25	1.21611281274903\\
				0.125	5.17852259811404\\
				0.0625	0.487589287120045\\
				0.03125	0.0891899576903648\\
				0.015625	0.0320688813828965\\
				0.0078125	0.0147005327782282\\
			};
			\addlegendentry{$\mathrm{LOD}_0$}
			
			\addplot [color=myRed, mark=o, line width=1.5pt, mark size=4.0pt]
			table[row sep=crcr]{%
				0.5	0.989084548183453\\
				0.25	0.978107637200998\\
				0.125	0.974190196165165\\
				0.0625	0.949289382403951\\
				0.03125	0.737628026248712\\
				0.015625	0.489017150256851\\
				0.0078125	0.203290704145362\\
			};
			\addlegendentry{FEM}
			
			\addplot [color=black, dashed,line width=1.5pt]
			table[row sep=crcr]{%
				0.5		0.75\\
				0.0078125	0.01171875\\
			};
			\addlegendentry{order $1$}

			\end{axis}
			\end{tikzpicture}%
		}
	\hfill
	\scalebox{.555}{
		\centering
		\includegraphics[width=\textwidth]{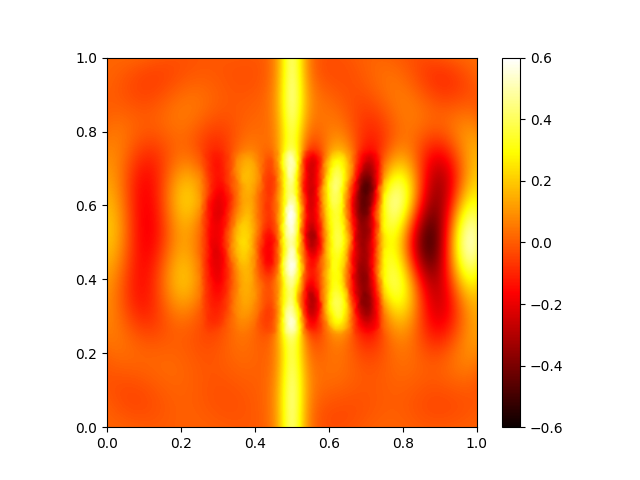}
	}
	\end{center}
	\caption[]
	{\small Relative energy errors of different iterative methods~(left) and real part of the reference solution~(right), both for Example~$2$.} 
\label{fig:errSol}
\end{figure}

In Figure~\ref{fig:errSol}~(left) we present the smallest errors of LOD solutions with $\ell = 2$ for different values of $H$ within the first $20$ iterations. As above, we denote with $\mathrm{LOD}_\mathrm{ad}$~({\protect \tikz{ \draw[line width=1.pt, myBlue] (0,0) rectangle (0.18,0.18);}}) the iterative method which only updates the element correctors whose error indicators are larger than $50\%$ of the maximal value. Further, we show the errors for the iterative LOD solutions, where the correctors are only computed once ($\mathrm{LOD}_\infty$, {\protect \tikz{ \draw[line width=1.pt, myGreen] (0,0) --(0.18,0) -- (0.09,0.18) -- (0,0);}}) or always updated ($\mathrm{LOD}_0$, {\protect \tikz{ \draw[line width=1.pt, myOrange] (0.07,0) --(0.14,0.1) -- (0.07,0.2) -- (0.,0.1) -- (0.07,0);}}) and a classical finite element approximation~({\protect \tikz{ \draw[line width=1.pt, myRed] circle (0.6ex);}}). As above, we see a stagnation of the error curve of the finite element method if the scale of data oscillation is not resolved. 
Once the resolution condition $kH\lesssim 1$ is satisfied, {all} LOD curves show the predicted convergence and especially perform significantly better than the finite element approximation as expected.
Note that the increase in the two LOD errors for the three first mesh sizes is caused by the violation of the resolution condition and, hence, does not contradict the above theory.
In fact, we emphasize that similar peaks in the error curves are also observed for the linear Helmholtz equation if the resolution condition is not fulfilled, see~\cite{GalP15,BroGP17}.

Finally, we emphasize that the LOD curves with and without updates are very close in this example, since only very few iterations ($\leq 4$) are required to be close to a fixed point and the maximal values of the error indicator decrease heavily after the first iteration. The maximal update percentages {for $\mathrm{LOD}_\mathrm{ad}$}, however, are higher than in the first example, which is related to the support of the nonlinearity being larger in the second one. These values are given by 
$100,\,62.5,\,12.5,\,12.89,\,6.93,\,4.22,\,1.07$
(in \%) for $H = 2^{-1},\,2^{-2},\,2^{-3},\,2^{-4},\,2^{-5},\,2^{-6},\,2^{-7}$. 
\subsection{Example 3: tolerance} 
Our third example is dedicated to an investigation of the behavior of the approximate solutions when only the tolerance \texttt{tol} is changed. As mentioned above, an update strategy based on the maximal value of the error estimators in every step is employed in Example~1 and~2, which generally leads to very good results with few updates. However, since we considered a fixed value of \texttt{tol} in the theoretical part above, we now present an example with this update strategy to indicate the influence of \texttt{tol}. Therefore, let $k = 15$, $f \equiv 100$, $A \equiv 1$, $\eps \equiv 0.3\,\id_{[0.15,0.85]^2}$, and $n$ as given in Figure~\ref{fig:exp3}~(right). 
As before, a reference solution is computed on the mesh $\calT_h$ with an iterative finite element strategy. The threshold for the (relative) residual is $10^{-12}$, which is reached after $22$ iterations. 

In Figure~\ref{fig:exp3}~(left), we present the development of the relative errors for LOD solutions with different tolerances for the fixed mesh size $H=2^{-3}$ and the localization parameter $\ell = 2$. The black dashed lines show the errors for the case where all correctors are always recomputed ($\mathtt{tol}=0$) and the errors for the case when correctors are never recomputed (which is the case for $\mathtt{tol}=2$). The plot shows how the errors decrease when the tolerance is lowered. Moreover, for the choice $\mathtt{tol} = 2^{-4}$ (and any smaller value of \texttt{tol}), the corresponding error curve is very close to the (optimal) curve for $\mathtt{tol}=0$. That is, the tolerance is then small enough such that the mesh size error dominates. This is in line with the error estimate derived in Theorem~\ref{t:errAdItLOD}. 

\begin{figure}%[h]
\begin{center}
	\scalebox{0.66}{
		\begin{tikzpicture}
			
			\begin{axis}[%
				width=3.05in,
				height=2.8in,
				scale only axis,
				at={(0.772in,0.481in)},
				scale only axis,
				xmin=0.72,
				xlabel={\large number of iterations},
				xmax=8.3,
				ymode=log,
				ymin=9.7e-3, 
				ymax=1.21,
				ylabel={\large rel energy error},
				yminorticks=true,
				axis background/.style={fill=white},
				legend style={legend cell align=left, align=left, draw=white!15!black},
				legend pos = north east
				]
				
				\addplot [color=black, mark=x, dashed, line width=1.2pt, mark size=3.0pt]
				table[row sep=crcr]{%
					1	0.539342422747006\\
					2	0.114521010265273\\
					3	0.116804702234896\\
					4	0.112111978394484\\
					5	0.11105526503728\\
					6	0.111389006935065\\
					7	0.1112662483236\\
					8	0.111279816148258\\
				};
				\addlegendentry{\small $\mathtt{tol} = 2$}
				
				\addplot [color=myBlue, mark=square, line width=1.2pt, mark size=3.0pt, mark options={solid}]
				table[row sep=crcr]{%
					1	0.539342422747006\\
					2	0.122211409426392\\
					3	0.0978402713795392\\
					4	0.0954979840783716\\
					5	0.0938868514508621\\
					6	0.0945067618177538\\
					7	0.0942839286318795\\
					8	0.0943402771874742\\
				};
				\addlegendentry{\small $\mathtt{tol} = 1$}
				
				\addplot [color=myGreen, mark=triangle, line width=1.2pt, mark size=3.0pt, mark options={solid}]
				table[row sep=crcr]{%
					1	0.539342422747006\\
					2	0.129495985769009\\
					3	0.0634504859461251\\
					4	0.053598510658141\\
					5	0.0525264114132362\\
					6	0.052968870340655\\
					7	0.0527186002920782\\
					8	0.0527894080699725\\
				};
				\addlegendentry{\small $\mathtt{tol} = 0.5$}
				
				\addplot [color=myOrange, mark=diamond, line width=1.2pt, mark size=3.0pt, mark options={solid}]
				table[row sep=crcr]{%
					1	0.539342422747006\\
					2	0.129205959837784\\
					3	0.0526001128043829\\
					4	0.0338348315973999\\
					5	0.0322500906368631\\
					6	0.0329697588810212\\
					7	0.0327260647318199\\
					8	0.0327587767769967\\
				};
				\addlegendentry{\small $\mathtt{tol} = 0.25$}
				
				\addplot [color=myRed, mark=o, line width=1.2pt, mark size=3.0pt]
				table[row sep=crcr]{%
					1	0.539342422747006\\
					2	0.129760395594983\\
					3	0.0519742213711254\\
					4	0.0275546691585753\\
					5	0.0225127908738953\\
					6	0.0235483985873276\\
					7	0.0233417997758009\\
					8	0.0233569801282098\\
				};
				\addlegendentry{\small $\mathtt{tol} = 0.125$}
				
				\addplot [color=magenta, mark=pentagon, line width=1.2pt, mark size=3.0pt, mark options={solid}]
				table[row sep=crcr]{%
					1	0.539342422747006\\
					2	0.128445528723396\\
					3	0.0479093837669409\\
					4	0.0223124173042601\\
					5	0.0147546219034979\\
					6	0.0155638386852934\\
					7	0.0154594931667194\\
					8	0.0154755783187324\\
				};
				\addlegendentry{\small $\mathtt{tol} = 0.0625$}
				
				\addplot [color=black, dashed, mark=x,  line width=1.2pt, mark size=3.0pt, mark options={solid}]
				table[row sep=crcr]{%
					1	0.539342422747006\\
					2	0.128362357747965\\
					3	0.0476497792168072\\
					4	0.0229070792854684\\
					5	0.0147589083017931\\
					6	0.0148592166117832\\
					7	0.0149676353932006\\
					8	0.0150419804455924\\
				};
				\addlegendentry{\small $\mathtt{tol} = 0$}
				
			\end{axis}
		\end{tikzpicture}%
	}
	\hfill
	\scalebox{.545}{
		\centering
		\includegraphics[width=\textwidth]{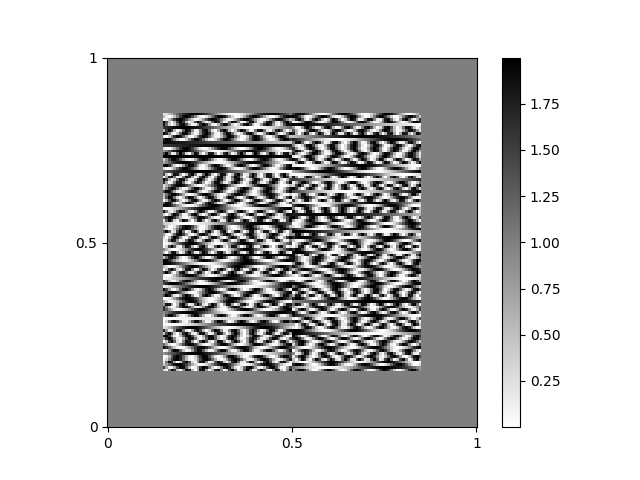}
	}
\end{center}
\caption[]
{\small Relative energy errors for the LOD method with respect to number of iterations for $H=2^{-3}$, $\ell = 2$ and different values of \texttt{tol}~(left) and coefficient $n$~(right), both for Example~$3$.} 
\label{fig:exp3}
\end{figure}

\section{Conclusion}\label{s:concl}

Within this work, we proposed and analyzed an iterative multiscale method for a~heterogeneous Helmholtz problem involving a Kerr-type nonlinearity. The method iteratively constructs (coarse) multiscale spaces that are partially updated in each iteration based on an appropriate error indicator. The approach allows for variations in the Kerr coefficient, the refractive index, and the diffusion coefficient. 
We have proved well-posedness and convergence estimates of the method for arbitrarily rough coefficients under mild resolution conditions on the mesh size and appropriate bounds for the nonlinearity. In particular, we have quantified the influence of localization, linearization, and partial recalculations of multiscale spaces in each iteration.
The presented numerical examples have confirmed the theoretical findings and in particular show that small updates in every iteration and moderate choices of the oversampling parameter are already sufficient to obtain reasonable approximations. 

In general, adaptive iterative multiscale approximations open up a new perspective on the design of multiscale methods for nonlinear problems. In future research, this idea may be transferred to other types of problems such as, e.g., quasilinear diffusion-type problems or time-dependent quasilinear wave problems.
\section*{Acknowledgments} 
We thank the anonymous reviewers for their remarks, which helped to improve the paper.
Roland Maier gratefully acknowledges support by the G\"oran Gustafsson Foundation for Research in Natural Sciences and Medicine. %
Barbara Verf\"urth's work is funded by the German Research Foundation (DFG) -- Project-ID 258734477 -- SFB 1173 as well as by the Federal Ministry of Education and Research (BMBF) and the Baden-W\"urttemberg Ministry of Science as part of the Excellence Strategy of the German Federal and State Governments.
\appendix
\section{Proofs of the results in Section~\ref{ss:adaptive}}
In this appendix, we prove Lemma~\ref{l:erroindic} and Theorem~\ref{t:errAdItLOD}.
We use the same notation as in Section~\ref{ss:adaptive}.

\subsection{Proof of Lemma~\ref{l:erroindic}}

Recall that $E(\calC_{\Phi, T}^\ell, \calC_{\Psi, T}^\ell)$ is an indicator in the sense that it yields an upper bound on the error $\calC_{\Phi, T}^\ell-\calC_{\Psi, T}^\ell$ without computing $\calC_{\Phi, T}^\ell$ itself.  
The indicator consists of two main factors: 
\begin{itemize}
\item[(i)] the error in the nonlinear coefficient
\begin{equation*} 
\|n\varepsilon(|\Psi|^2-|\Phi|^2)\|_{L^\infty(K)}
\end{equation*} 
for which only $\Phi$ and $\Psi$ themselves are required, but not the corresponding correctors and 
\item[(ii)] the factor 
\begin{equation*} 
\max_{v\vert_T\,:\, v\in V_H}\frac{\|\chi_T v-\calC_{\Psi, T}^\ell v\|_{0, K}^2}{\|v\|_{0,T}^2}
\end{equation*} 
which can be pre-computed when calculating $\calC_{\Psi, T}^\ell$.
This requires the solution of a small $L^2$-eigenvalue problem and in particular, the factor itself is a coarse-scale quantity so that the demanded storage is negligible. 
\end{itemize}

We emphasize that in practice the functions $\Phi$ and $\Psi$ in the error indicator will be iterative multiscale approximations and thus fully discrete objects.
This justifies the use of the $L^\infty$-norm here.
Throughout the paper and especially in Section~\ref{s:NLH} we avoided the $L^\infty$-norm because of the low regularity setting.
One can also rewrite the error indicator and the proof below with the Nirenberg-type inequality such that it holds for general functions $\Phi, \Psi\in H^1(\Omega)\setminus L^\infty (\Omega)$.
The rewritten error indicator involves $L^3$-norms in its two main factors analyzed above which we did not consider for implementational reasons.

\begin{proof}[Proof of Lemma~\ref{l:erroindic}]
We abbreviate $z:=(\calC_{\Phi, T}^\ell-\calC_{\Psi, T}^\ell)v_H$ and recall that $z \in \W(\Nb^\ell(T))$. 
Lemma~\ref{l:infsupW} and \eqref{eq:elementcorrector} therefore yield
\begin{align*}
\gamma \|z\|_{1,k}^2\leq \calB_{\mathrm{lin}}(\Phi; z, z)
& = \calB_{\mathrm{lin}, T}(\Phi; v_H, z)-\calB_{\mathrm{lin}, T}(\Psi; v_H, z)\\&\qquad+\calB_{\mathrm{lin}, \Nb^\ell(T)}(\Psi; \calC_{\Psi, T}^\ell v_H, z)- \calB_{\mathrm{lin}, \Nb^\ell(T)}(\Phi; \calC_{\Psi, T}^\ell v_H, z)\\
&\leq k\|n\varepsilon(|\Phi|^2-|\Psi|^2) (\chi_T v_H-\calC_{\Psi, T}^\ell v_H)\|_{0, \Nb^\ell(T)}\|z\|_{1,k}.
\end{align*}
After dividing by $\|z\|_{1,k}$ and taking the square on both sides, we further obtain
\begin{align*}
\gamma^2\|z\|_{1,k}^2&\leq k^2\|n\varepsilon(|\Phi|^2-|\Psi|^2) (\chi_T v_H-\calC_{\Psi, T}^\ell v_H)\|_{0, \Nb^\ell(T)}^2\\
&\leq \sum_{K\in \Nb^\ell(T)} k^2 \|n\varepsilon(|\Phi|^2-|\Psi|^2)\|_{L^\infty(K)}^2\|(\chi_T v_H-\calC_{\Psi, T}^\ell v_H)\|_{0, K}^2\\
&\leq  \sum_{K\in \Nb^\ell(T)} \|n\varepsilon(|\Phi|^2-|\Psi|^2)\|_{L^\infty(K)}^2\Bigl(\max_{w|_T\,:\, w\in V_H}\frac{\|(\chi_T w-\calC_{\Psi, T}^\ell w)\|_{0, K}^2}{\|w\|_{0,T}^2}\Bigr)k^2\|v_H\|_{0,T}^2\\
&\leq E(\calC_{\Phi, T}^\ell, \calC_{\Psi, T}^\ell)^2\,\|v_H\|_{1,k,T}^2,
\end{align*}
which shows \eqref{eq:indicloc}.

Due to the definition of $\calC_{\Phi}^\ell$ and $\calC_{\Psi}^\ell$ as sums of element correctors with support only in $\Nb^\ell(T)$, we then deduce that there exists a constant $C_{\mathrm{ol}}$, independent of $H$ and $\ell$ such that 
\begin{equation*}
\begin{aligned}
\|(\calC_{\Phi}^\ell-\calC_{\Psi}^\ell)v_H\|_{1,k}^2&\leq C_{\mathrm{ol}}^2 \ell^{d}\sum_{T\in \calT_H}\|(\calC_{\Phi, T}^\ell-\calC_{\Psi,T}^\ell)v_H\|_{1,k}^2\\
&\leq C_{\mathrm{ol}}^2\ell^{d}\gamma^{-2}\big(\max_{T\in \calT_H}E(\calC_{\Phi, T}^\ell, \calC_{\Psi, T}^\ell)\big)^2\,\|v_H\|_{1,k}^2
\end{aligned}
\end{equation*}
and, hence, \eqref{eq:indicglobal}. 
\end{proof}

\subsection{Proof of Theorem~\ref{t:errAdItLOD}}
Inspired by the proof of Theorem~\ref{t:errItLOD}, we define for fixed $m \in \N$ an auxiliary solution $\tilde{w}^m\in H^1(D)$ as the solution to
\begin{equation}\label{eq:auxsol}
\calBl(\uHT^{m-1};\tilde w^m,v) = (f,v)
\end{equation}
for all $v \in H^1(D)$. 
The next proposition quantifies the error $\tilde{w}^m-\uHT^m$. In particular, we first show that the LOD problem \eqref{eq:itmssol} in Algorithm~\ref{alg:adaptive} is well-posed. Note that this does not directly follow from Lemma \ref{l:stabapproxLOD} because not all element correctors are computed anew.

\begin{proposition}\label{prop:adapitLOD}
Let $m\geq 1$ be fixed and let $\uHT^{m-1}$ satisfy \eqref{eq:stabPhicontr}.
Further, let $\delta(k)$ denote the inf-sup constant with respect to $\calBl(\uHT^{m-1};\cdot,\cdot)$ in $H^1(D)$ as quantified in Lemma~\ref{l:infsup}. 
Assume that~\eqref{eq:oversampl} and~\eqref{eq:condtol} are fulfilled.
Then, the following inf-sup condition is satisfied,
\begin{equation*}
\adjustlimits \inf_{v_H\in V_H}\sup_{w_H\in V_H}\frac{\Re\,\calBl(\uHT^{m-1};(1-\tilde{\calC}_{m-1}^\ell)v_H, (1-\tilde{\calC}_{m-1}^{\ell,*})w_H)}{\|(1-\tilde{\calC}_{m-1}^{\ell})v_H\|_{1,k}\,\|(1-\tilde{\calC}_{m-1}^{\ell,*})w_H\|_{1,k}}\geq \frac{\delta(k)\gamma^2}{18C_\mathrm{int}^2C_\calB^2}.
\end{equation*}
In particular, $\uHT^m$ is well-defined.
Furthermore, we have
\begin{equation}\label{eq:erradapLODm}
\|\tilde{w}^m-\uHT^m\|_{1,k}\leq 2\gamma^{-1}\,C_\mathrm{int}\big(H+C_\calB C_{\mathrm{loc}}\,\ell^{d/2}\beta^\ell+C_\calB C_\mathrm{stab}(k)C_{\mathrm{ol}}\,\ell^{d/2} \,\gamma^{-1}\, \mathtt{tol}\big)\|f\|_0.
\end{equation}
\end{proposition}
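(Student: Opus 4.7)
The plan is to view problem~\eqref{eq:itmssol} as a Petrov--Galerkin discretization whose trial space $(1-\tilde{\calC}_{m-1}^\ell)V_H$ and test space $(1-\tilde{\calC}_{m-1}^{\ell,*})V_H$ are, due to localization and reuse, perturbations of the ideal multiscale spaces analyzed in Lemma~\ref{l:stabapproxLOD}. Accordingly, the proof will mirror that of Lemma~\ref{l:stabapproxLOD}, with the perturbation controlled by Lemmas~\ref{l:trunccorrectorerr} and~\ref{l:erroindic}. The main preliminary estimate, to be established first and used throughout, is
\[\|(\calC_{\scalebox{.65}{$\uHT^{m-1}$}}-\tilde{\calC}_{m-1}^\ell)v_H\|_{1,k}\leq C_{\mathrm{loc}}\ell^{d/2}\beta^\ell\|v_H\|_{1,k}+C_{\mathrm{ol}}\ell^{d/2}\gamma^{-1}\mathtt{tol}\,\|v_H\|_{1,k}\]
for every $v_H\in V_H$, together with the analogous adjoint estimate. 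The first summand is immediate from Lemma~\ref{l:trunccorrectorerr}; the second follows because Algorithm~\ref{alg:adaptive} ensures $E_T^m\leq\mathtt{tol}$ on each element $T\notin\mathcal{M}_m$, so Lemma~\ref{l:erroindic} yields the local bound $\gamma^{-1}\mathtt{tol}\,\|v_H\|_{1,k,T}$, which~\eqref{eq:indicglobal} assembles into the claimed global estimate.

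For the inf-sup condition I would expand $\calBl(\uHT^{m-1};(1-\tilde{\calC}_{m-1}^\ell)v_H,(1-\tilde{\calC}_{m-1}^{\ell,*})w_H)$ as the ideal quantity $\calBl(\uHT^{m-1};(1-\calC_{\scalebox{.65}{$\uHT^{m-1}$}})v_H,(1-\calC_{\scalebox{.65}{$\uHT^{m-1}$}}^*)w_H)$ plus three cross-terms produced by inserting the corrector perturbations. With $w_H=\IH w$ chosen exactly as in the proof of Lemma~\ref{l:stabapproxLOD}, the ideal term is bounded below by $\delta(k)\|(1-\calC_{\scalebox{.65}{$\uHT^{m-1}$}})v_H\|_{1,k}$, while each cross-term is controlled by $C_\calB$ times the preliminary corrector-perturbation estimate times the relevant norm, converting $\|v_H\|_{1,k}$ into $C_{\mathrm{int}}\|(1-\calC_{\scalebox{.65}{$\uHT^{m-1}$}})v_H\|_{1,k}$ via the identity $\IH(1-\calC_{\scalebox{.65}{$\uHT^{m-1}$}})v_H=v_H$. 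The first inequalities in~\eqref{eq:oversampl} and~\eqref{eq:condtol} are quantitatively tuned so that the sum of cross-terms absorbs into at most one half of the ideal contribution, while the second inequalities yield the norm equivalences $\|(1-\tilde{\calC}_{m-1}^\ell)v_H\|_{1,k}\leq 2\|(1-\calC_{\scalebox{.65}{$\uHT^{m-1}$}})v_H\|_{1,k}$ and $\|(1-\tilde{\calC}_{m-1}^{\ell,*})w_H\|_{1,k}\lesssim C_\calB\gamma^{-1}\|w\|_{1,k}$. Collecting constants gives the claimed lower bound $\delta(k)\gamma^2/(18C_{\mathrm{int}}^2C_\calB^2)$, from which well-posedness of $\uHT^m$ follows by the finite-dimensional Banach--Ne\v{c}as--Babu\v{s}ka theorem.

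For the error estimate~\eqref{eq:erradapLODm} I adopt a Strang-type strategy with the test element $v^*:=(1-\tilde{\calC}_{m-1}^\ell)\IH\tilde w^m\in(1-\tilde{\calC}_{m-1}^\ell)V_H$. Since $\tilde w^m$ solves~\eqref{eq:auxsol} on all of $H^1(D)$, the Galerkin identity $\calBl(\uHT^{m-1};\tilde w^m-\uHT^m,\tilde v)=0$ holds for every $\tilde v\in(1-\tilde{\calC}_{m-1}^{\ell,*})V_H$. Combining this orthogonality with the inf-sup from the previous step and continuity of $\calBl$ bounds $\|v^*-\uHT^m\|_{1,k}$ by $\|v^*-\tilde w^m\|_{1,k}$ up to a Strang factor. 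I then decompose
\[v^*-\tilde w^m=(\calC_{\scalebox{.65}{$\uHT^{m-1}$}}-\tilde{\calC}_{m-1}^\ell)\IH\tilde w^m+\big((1-\calC_{\scalebox{.65}{$\uHT^{m-1}$}})\IH\tilde w^m-\tilde w^m\big),\]
estimating the first summand by the preliminary corrector-perturbation bound times $\|\IH\tilde w^m\|_{1,k}\leq C_{\mathrm{int}}C_\mathrm{stab}(k)\|f\|_0$ (using~\eqref{eq:IH1}--\eqref{eq:IH2} and Proposition~\ref{prop:auxlinpb}), and recognizing the second as the ideal LOD best-approximation error bounded in Lemma~\ref{l:stabapproxLOD} by $\gamma^{-1}C_{\mathrm{int}}H\|f\|_0$. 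The triangle inequality $\|\tilde w^m-\uHT^m\|_{1,k}\leq\|v^*-\tilde w^m\|_{1,k}+\|v^*-\uHT^m\|_{1,k}$, together with one more use of~\eqref{eq:oversampl}--\eqref{eq:condtol} to control the Strang amplification into the universal prefactor~$2$, completes the bound.

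The main obstacle is the intricate bookkeeping of constants: the Strang amplification, the two perturbation sources ($\beta^\ell$ from localization and $\mathtt{tol}$ from reuse), and the various norm equivalences all accumulate factors $\gamma^{-1}$, $C_{\mathrm{int}}$, $C_\calB$, $C_\mathrm{stab}(k)$ that must combine so as to reproduce the precise form of~\eqref{eq:erradapLODm}. A particularly delicate point is the asymmetric treatment of the two perturbation sources in the final bound—only the tolerance term carries an extra $C_\mathrm{stab}(k)\gamma^{-1}$ factor—which must emerge from the precise interplay between the quantitative update criterion in~\eqref{eq:condtol} and the Strang step, rather than from the raw estimates alone.
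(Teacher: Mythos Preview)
Your argument for the inf-sup condition is essentially the paper's: expand the perturbed bilinear form around the ideal corrector, estimate the defect terms via Lemmas~\ref{l:trunccorrectorerr} and~\ref{l:erroindic}, and pass between the various norms using $\IH(1-\calC)v_H=v_H$. That part is fine.

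The gap is in Step~2. Your C\'ea/Strang argument bounds $\|v^*-\uHT^m\|_{1,k}$ by the quasi-optimality factor $C_\calB\cdot 18C_{\mathrm{int}}^2C_\calB^2/(\delta(k)\gamma^2)$ times $\|\tilde w^m-v^*\|_{1,k}$, and the latter contains the mesh-size contribution $\gamma^{-1}C_{\mathrm{int}}H\|f\|_0$. Hence the $H$-term in your final bound inevitably carries an extra factor $\delta(k)^{-1}$, which is \emph{not} present in~\eqref{eq:erradapLODm}. Conditions~\eqref{eq:oversampl}--\eqref{eq:condtol} cannot rescue this: they make $\ell^{d/2}\beta^\ell$ and $\mathtt{tol}$ small relative to $\delta(k)$, but say nothing about $H$, so there is no mechanism for the ``Strang amplification'' in front of $H$ to collapse to the universal prefactor~$2$ as you hope.

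The paper circumvents this by exploiting that $e-e_H^\ell:=\tilde w^m-v^*\in\W$. This allows a direct use of the coercivity in Lemma~\ref{l:infsupW} (constant $\gamma$, independent of $\delta(k)$) to bound $\|e-e_H^\ell\|_{1,k}$: one writes $\calBl(\uHT^{m-1};e-e_H^\ell,e-e_H^\ell)$, expands, and the term $(f,e-e_H^\ell)$ picks up the clean factor $C_{\mathrm{int}}H$ via~\eqref{eq:IH1}. For the remaining piece $\|e_H^\ell\|_{1,k}=\|v^*-\uHT^m\|_{1,k}$, the paper uses a \emph{duality argument}: solve the discrete adjoint problem for $z_H^\ell$, use Galerkin orthogonality, and insert ideal/localized/adaptive correctors on the test side. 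The crucial gain is that the $\delta(k)^{-1}$-factor from the adjoint stability now multiplies only $(C_{\mathrm{loc}}\ell^{d/2}\beta^\ell+C_{\mathrm{ol}}\ell^{d/2}\gamma^{-1}\mathtt{tol})$, and \emph{this} product is made $\leq 1$ by~\eqref{eq:oversampl}--\eqref{eq:condtol}, yielding $\|e_H^\ell\|_{1,k}\leq\|e-e_H^\ell\|_{1,k}$ and hence the prefactor~$2$. Your Strang route cannot reproduce this decoupling, because it applies the $\delta(k)$-dependent amplification uniformly to the entire approximation error rather than only to the perturbation part.
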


\begin{proof}
\emph{Step 1 (inf-sup constant)}:
Let $v_H\in V_H$ be arbitrary but fixed.
Due to the continuous inf-sup condition~\eqref{eq:infsup}, there exists $w\in H^1(D)$ with $\|w\|_{1,k}=1$ such that
\begin{equation*}
\Re\,\calBl(\uHT^{m-1};(1-\calC_{\scalebox{.65}{$\uHT^{m-1}$}})v_H,w)\geq \delta(k)\,\|(1-\calC_{\scalebox{.65}{$\uHT^{m-1}$}})v_H\|_{1,k}.
\end{equation*}
We set $w_H=\IH w$ and deduce
\begin{equation}\label{eq:proofAuxsol}
\begin{aligned}
\Re\,\calBl(\uHT^{m-1};(1-\tilde{\calC}_{m-1}^\ell&) v_H, (1-\tilde{\calC}_{m-1}^{\ell,*})w_H)\\
&=\Re\,\calBl(\uHT^{m-1};(1-\calC_{\scalebox{.65}{$\uHT^{m-1}$}}^\ell) v_H, (1-\tilde{\calC}_{m-1}^{\ell,*})w_H)\\
&\qquad-\Re\,\calBl(\uHT^{m-1};(\tilde{\calC}_{m-1}^\ell-\calC_{\scalebox{.65}{$\uHT^{m-1}$}}^\ell)v_H, (1-\tilde{\calC}_{m-1}^{\ell,*})w_H)\\
&=\Re\,\calBl(\uHT^{m-1};(1-\calC_{\scalebox{.65}{$\uHT^{m-1}$}}) v_H, (1-\tilde{\calC}_{m-1}^{\ell,*})w_H)\\
&\qquad-\Re\,\calBl(\uHT^{m-1};(\calC_{\scalebox{.65}{$\uHT^{m-1}$}}^\ell-\calC_{\scalebox{.65}{$\uHT^{m-1}$}})v_H, (1-\tilde{\calC}_{m-1}^{\ell, *})w_H)\\
&\qquad-\Re\,\calBl(\uHT^{m-1};(\tilde{\calC}_{m-1}^\ell-\calC_{\scalebox{.65}{$\uHT^{m-1}$}}^\ell)v_H, (1-\tilde{\calC}_{m-1}^{\ell,*})w_H)\\
&=\Re\,\calBl(\uHT^{m-1};(1-\calC_{\scalebox{.65}{$\uHT^{m-1}$}}) v_H, w)\\
&\qquad-\Re\,\calBl(\uHT^{m-1};(\calC_{\scalebox{.65}{$\uHT^{m-1}$}}^\ell-\calC_{\scalebox{.65}{$\uHT^{m-1}$}})v_H, (1-\tilde{\calC}_{m-1}^{\ell, *})w_H)\\
&\qquad-\Re\,\calBl(\uHT^{m-1};(\tilde{\calC}_{m-1}^\ell-\calC_{\scalebox{.65}{$\uHT^{m-1}$}}^\ell)v_H, (1-\tilde{\calC}_{m-1}^{\ell,*})w_H).
\end{aligned}
\end{equation}
For the last equality, we wrote $1-\tilde{\calC}_{m-1}^{\ell,*}=1-\tilde{\calC}_{m-1}^*+\tilde{\calC}_{m-1}^*-\tilde{\calC}_{m-1}^{\ell, *}$ and used the orthogonality of $(1-\calC_{\scalebox{.65}{$\uHT^{m-1}$}})V_H$ and $\W$ as well as $(1-\tilde{\calC}_{m-1}^*)\IH w = (1-\tilde{\calC}_{m-1}^*)w$.
Here, $\tilde{\calC}_{m-1}$ is defined as $\tilde{\calC}_{m-1}^\ell$ with $\ell=\infty$, i.e., on global patches.
We can now use \eqref{eq:proofAuxsol}, the continuous inf-sup condition~\eqref{eq:infsup}, and the continuity of $\calBl$ to obtain 
\begin{equation}
\begin{aligned}\label{eq:proofAuxsol2}
\Re\,\calBl(&\uHT^{m-1};(1-\tilde{\calC}_{m-1}^\ell) v_H, (1-\tilde{\calC}_{m-1}^{\ell,*})w_H)\\
&\geq  \delta(k)\|(1-\calC_{\scalebox{.65}{$\uHT^{m-1}$}})v_H\|_{1,k}-C_\calB\|(\calC_{\scalebox{.65}{$\uHT^{m-1}$}}^\ell-\calC_{\scalebox{.65}{$\uHT^{m-1}$}})v_H\|_{1,k}\|(1-\tilde{\calC}_{m-1}^{\ell, *})w_H\|_{1,k}\\
&\qquad-C_\calB\|(\tilde{\calC}_{m-1}^\ell-\calC_{\scalebox{.65}{$\uHT^{m-1}$}}^\ell)v_H\|_{1,k}\|(1-\tilde{\calC}_{m-1}^{\ell, *})w_H\|_{1,k}.
\end{aligned}
\end{equation}
Lemma~\ref{l:trunccorrectorerr} yields
\begin{equation}\label{eq:prooftruncerr}
\|(\calC_{\scalebox{.65}{$\uHT^{m-1}$}}^\ell-\calC_{\scalebox{.65}{$\uHT^{m-1}$}})v_H\|_{1,k}\leq C_{\mathrm{loc}}\ell^{d/2}\beta^\ell \|v_H\|_{1,k}.
\end{equation}
Note that the adaptive algorithm ensures together with Lemma~\ref{l:erroindic} that 
\begin{equation}\label{eq:linerrorglobal}
\begin{aligned}
\|(\tilde{\calC}_{m-1}^\ell-\calC_{\scalebox{.65}{$\uHT^{m-1}$}}^\ell)v_H\|_{1,k}^2\leq C_{\mathrm{ol}}^2\ell^{d}\gamma^{-2}\,\texttt{tol}^2\|v_H\|_{1,k}^2.
\end{aligned}
\end{equation}
Furthermore, we have norm equivalences
\begin{align*}
\|v_H\|_{1,k}=\|\IH((1-{\calC}_{\scalebox{.65}{$\uHT^{m-1}$}})v_H)\|_{1,k}&\leq C_\mathrm{int}\|(1-\calC_{\scalebox{.65}{$\uHT^{m-1}$}})v_H\|_{1,k},\\
\|v_H\|_{1,k}=\|\IH((1-\tilde{\calC}_{m-1}^\ell)v_H)\|_{1,k}&\leq C_\mathrm{int}\|(1-\tilde{\calC}_{m-1}^\ell)v_H\|_{1,k},
\end{align*}
as well as the estimate
\begin{align*}
\|(1-&\tilde{\calC}_{m-1}^{\ell})v_H\|_{1,k}\\
&\leq\|(1-\calC_{\scalebox{.65}{$\uHT^{m-1}$}})v_H\|_{1,k}+\|(\calC_{\scalebox{.65}{$\uHT^{m-1}$}}-\calC_{\scalebox{.65}{$\uHT^{m-1}$}}^{\ell})v_H\|_{1,k}+\|(\calC_{\scalebox{.65}{$\uHT^{m-1}$}}^{\ell}-\tilde{\calC}_{m-1}^{\ell})v_H\|_{1,k}\\
&\leq(\gamma^{-1}C_\calB+C_{\mathrm{loc}}\ell^{d/2}\beta^\ell+C_{\mathrm{ol}}\ell^{d/2}\gamma^{-1}\,\mathtt{tol})\|v_H\|_{1,k}
\\&\leq 3\gamma^{-1}C_\calB \|v_H\|_{1,k},
\end{align*}
where we have used \eqref{eq:oversampl} and \eqref{eq:condtol} in the last step.
With the same line of arguments we can also show that
\begin{align*}
\|(1-\tilde{\calC}_{m-1}^{\ell, *})w_H\|_{1,k}
&\leq 3\gamma^{-1}C_\calB\|w_H\|_{1,k}\leq 3\gamma^{-1}C_\calB C_{\mathrm{int}}\|w\|_{1,k}.
\end{align*}
Inserting \eqref{eq:prooftruncerr}--\eqref{eq:linerrorglobal} and these norm equivalences into \eqref{eq:proofAuxsol2}, we finally obtain
\begin{align*}
\Re\,\calBl(\uHT^{m-1};&(1-\tilde{\calC}_{m-1}^\ell) v_H, (1-\tilde{\calC}_{m-1}^{\ell, *})w_H)\\
&\geq \Bigl(\delta(k)\frac{\gamma^2}{9C_\calB^2C_{\mathrm{int}}^2}-C_\calB C_{\mathrm{int}}C_{\mathrm{loc}}\,\ell^{d/2}\beta^\ell-C_\calB C_{\mathrm{int}}C_{\mathrm{ol}}\,\ell^{d/2}\,\gamma^{-1}\,\texttt{tol}\Bigr)\\ &\hspace{4cm}\cdot\|(1-\tilde{\calC}_{m-1}^\ell)v_H\|_{1,k}\|(1-\tilde{\calC}_{m-1}^{\ell,*})w_H\|_{1,k},
\end{align*}
which finishes the proof of the inf-sup condition using the conditions~\eqref{eq:oversampl}--\eqref{eq:condtol}.

\emph{Step 2 (error estimate)}: We write $\uHT^{m}=(1-\tilde{\calC}_{m-1}^\ell)\IH \uHT^m$ and set $e:=\tilde w^m-\uHT^m$, where $\tilde w^m$ is the auxiliary solution defined in \eqref{eq:auxsol}. Further, let $e_H^\ell :=(1-\tilde{\calC}_{m-1}^\ell) \IH e$.
Here, $\tilde{\calC}_{m-1}$ is the corrector with $\ell=\infty$ as explained above.
Observe that $e-e_H^\ell = \tilde w^m-(1-\tilde{\calC}_{m-1}^\ell) \IH \tilde w^m\in \W$.
With Lemma~\ref{l:infsupW}, \eqref{eq:IH1}, \eqref{eq:prooftruncerr}, \eqref{eq:linerrorglobal}, and \eqref{eq:stabLin}, we obtain
\begin{equation}\label{eq:proofeeH}
\begin{aligned}
\gamma\,\|e-e_H^\ell\|_{1,k}^2&\leq\Re\,\calBl(\uHT^{m-1};\tilde w^m-(1-\tilde{\calC}_{m-1}^\ell)\IH \tilde w^m, e-e_H^\ell)\\
&=\Re\,(f,\overline{e-e_H^\ell})-\Re\, \calBl(\uHT^{m-1};(1-\calC_{\scalebox{.65}{$\uHT^{m-1}$}}^\ell)\IH \tilde w^m, e-e_H^\ell)\\
&\qquad-\Re\,\calBl(\uHT^{m-1};(\calC_{\scalebox{.65}{$\uHT^{m-1}$}}^\ell-\tilde{\calC}_{m-1}^\ell)\IH \tilde w^m, e-e_H)\\
&=\Re\,(f,\overline{e-e_H^\ell})-\Re\, \calBl(\uHT^{m-1};(\calC_{\scalebox{.65}{$\uHT^{m-1}$}}-\calC_{\scalebox{.65}{$\uHT^{m-1}$}}^\ell)\IH \tilde w^m, e-e_H^\ell)\\
&\qquad-\Re\,\calBl(\uHT^{m-1};(\calC_{\scalebox{.65}{$\uHT^{m-1}$}}^\ell-\tilde{\calC}_{m-1}^\ell)\IH \tilde w^m, e-e_H)\\
&\leq C_\mathrm{int}\big(H+C_\calB C_{\mathrm{loc}}\ell^{d/2}\beta^\ell+ C_\calB C_\mathrm{stab}(k)C_{\mathrm{ol}}\,\ell^{d/2}\,\gamma^{-1}\,\texttt{tol}\big)\|f\|_0\,\|e-e_H^\ell\|_{1,k}.
\end{aligned}
\end{equation}
From the definitions of $\tilde w^m$ and $\uHT^m$, we deduce the following Galerkin orthogonality
\begin{equation*} 
\calBl(\uHT^{m-1}; e, w_H)=0
\end{equation*}
for all $w_H\in (1-\tilde{\calC}_{m-1}^{\ell, *})V_H$. 
Let $z_H^\ell\in (1-\tilde{\calC}_{m-1}^{\ell, *})V_H$ be the unique solution to the dual problem
\begin{equation*}
\calBl(\uHT^{m-1};v_H, z_H^\ell)=(v_H, e_H^\ell)_{1,k}
\end{equation*}
for all $v_H\in (1-\tilde{\calC}_{m-1}^\ell)V_H$, which is well-posed by the inf-sup condition that we have proved in step 1.
We observe  that $z_H^\ell=(1-\tilde{\calC}_{m-1}^{\ell, *})\IH z_{H}^\ell$.
Due to the form of $e_H^\ell$ and the Galerkin orthogonality, we hence deduce 
\begin{align*}
\|e_H^\ell\|_{1,k}^2&=\calBl(\uHT^{m-1};e_H^\ell, z_H^\ell)\\
&=\calBl(\uHT^{m-1};e_H^\ell-e, z_H^\ell)\\
&=\calBl(\uHT^{m-1};e_H^\ell-e, (1-\calC_{\scalebox{.65}{$\uHT^{m-1}$}}^{\ell,*})\IH z_H^\ell)+\calBl(\uHT^{m-1};e_H^\ell-e,(\calC_{\scalebox{.65}{$\uHT^{m-1}$}}^{\ell,*}-\tilde{\calC}_{m-1}^{\ell,*})\IH z_{H}^\ell)\\
&=\calBl(\uHT^{m-1};e_H^\ell-e, (\calC_{\scalebox{.65}{$\uHT^{m-1}$}}^*-\calC_{\scalebox{.65}{$\uHT^{m-1}$}}^{\ell,*})\IH z_H^\ell)\\
&\qquad+\calBl(\uHT^{m-1};e_H^\ell-e,(\calC_{\scalebox{.65}{$\uHT^{m-1}$}}^{\ell,*}-\tilde{\calC}_{m-1}^{\ell,*})\IH z_{H}^\ell)\\
&\leq C_\calB\,C_{\mathrm{int}}(C_{\mathrm{loc}}\,\ell^{d/2}\beta^\ell+C_{\mathrm{ol}}\ell^{d/2}\,\gamma^{-1}\,\texttt{tol})\,\|z_{H}^\ell\|_{1,k}\,\|e_H^\ell-e\|_{1,k}\\
&\leq C_\calB\,C_{\mathrm{int}}\, (C_{\mathrm{loc}}\,\ell^{d/2}\beta^\ell+C_{\mathrm{ol}}\ell^{d/2}\,\gamma^{-1}\,\texttt{tol})\,\frac{18C_\mathrm{int}^2C_\calB^2}{\delta(k)\gamma^2}\,\|e_H^\ell\|_{1,k}\,\|e_H^\ell-e\|_{1,k}.
\end{align*}
Using the condition \eqref{eq:oversampl} on $\ell$ and \eqref{eq:condtol}, we obtain 
\begin{equation*}
\|e_H^\ell\|_{1,k} \leq \|e_H^\ell-e\|_{1,k}.
\end{equation*}
Together with \eqref{eq:proofeeH} and an application of the triangle inequality, this concludes the proof.
\end{proof}
This Proposition is now used  inductively to prove Theorem~\ref{t:errAdItLOD}.
\begin{proof}[Proof of Theorem~\ref{t:errAdItLOD}]
We proceed similarly as in the proof of Theorem~\ref{t:errItLOD}.
Let $m\geq 1$ be fixed and define the auxiliary solution $\tilde{w}^m\in H^1(D)$ as the solution to~\eqref{eq:auxsol}. 
Note that due to~\eqref{eq:boundRHSadap} and by Proposition~\ref{prop:auxlinpb} (as well as its adapted version for the series of multiscale solutions), we have the following stability estimates,  
\begin{align*}
\|\tilde{u}^{m-1}_{H}\|_{1,k} &\leq \frac{18\,C_\mathrm{int}^2 C_\calB^2}{k\delta(k)\gamma^2}\,\|f\|_0,\\
\|u^m\|_{1,k} &\leq C_\mathrm{stab}(k)\,\|f\|_0,\\
\|\tilde{w}^m\|_{1,k} &\leq C_\mathrm{stab}(k)\,\|f\|_0.
\end{align*}
Further, the error $e^m = u^m-\tilde{w}^m$ solves
\begin{equation*}
\begin{aligned}
\calBl(u^{m-1};e^m,v) &= \calBl(\uHT^{m-1};w^m,v) - \calBl(u^{m-1};w^m,v)\\
&= (k^2\eps(|u^{m-1}|^2 - |\uHT^{m-1}|^2) w^m,v)
\end{aligned}
\end{equation*}
for all $v \in H^1(D)$. 
As in the proof of Theorem~\ref{t:errItLOD}, we obtain
\begin{align*}
\|e^m\|_{1,k} &\leq \vartheta\,\|\uHT^{m-1} - u^{m-1}\|_{1,k}.
\end{align*}
Combining this with Proposition~\ref{prop:adapitLOD}, we deduce
\begin{align*}
\|\uHT^m-u^m\|_{1,k}&\leq \vartheta\|\uHT^{m-1}-u^{m-1}\|_{1,k} \\&\quad+2\gamma^{-1}C_\mathrm{int}\big(H+C_\calB C_{\mathrm{loc}}\,\ell^{d/2}\beta^\ell+C_\calB C_\mathrm{stab}(k)C_{\mathrm{ol}}\,\ell^{d/2}\,\gamma^{-1}\, \mathtt{tol}\big)\|f\|_0\\
&\leq \vartheta^m\|\uHT^0-u^0\|_{1,k}\\*&\quad+2\gamma^{-1}C_\mathrm{int}\big(H+C_{\mathrm{loc}}C_\calB\,\ell^{d/2}\beta^\ell+ C_\calB C_\mathrm{stab}(k)C_{\mathrm{ol}}\,\ell^{d/2}\,\gamma^{-1}\, \mathtt{tol}\big)\|f\|_0\sum_{n\in\mathbb{N}}\vartheta^n\\
&\leq \vartheta^m\|\uHT^0-u^0\|_{1,k}\\*&\quad+\frac{2}{1-\vartheta}\gamma^{-1}C_\mathrm{int}\big(H+C_\calB C_{\mathrm{loc}}\,\ell^{d/2}\beta^\ell+C_\calB C_\mathrm{stab}(k)C_{\mathrm{ol}}\,\ell^{d/2}\,\gamma^{-1}\, \mathtt{tol}\big)\|f\|_0,
\end{align*}
which concludes the proof.
\end{proof}

\end{document}